\numberwithin{equation}{section}
\newtheorem{conj}[equation]{Conjecture}
\newtheorem{claim}[equation]{Claim} 
\newtheorem{lem}[equation]{Lemma}
\newtheorem{prop}[equation]{Proposition}
\newtheorem{thm}[equation]{Theorem}
\theoremstyle{definition}
\newtheorem{que}[equation]{Question}
\newtheorem{defn}[equation]{Definition}
\newtheorem{rem}[equation]{Remark}
\newcommand{\cref}[1]{Corollary~\ref{#1}}
\newcommand{\id}{\operatorname{id}}
\begin{document}
\title{On largeness and multiplicity of the first eigenvalue}
\author{Sugata Mondal}

\subjclass{Primary 35P05, 58G20,
43A85, 58G25; Secondary 58J5}
\address{Max Planck Institute for Mathematics, Vivatsgasse 7, 53111 Bonn.}
\email{sugata.mondal\@@mpim-bonn.mpg.de}
\keywords{Laplace operator, first eigenvalue, small eigenvalues}
\date{\today}

\begin{abstract}
We apply topological methods to study the smallest non-zero number $\lambda_1$ in the spectrum of the Laplacian on finite area hyperbolic surfaces. For closed hyperbolic surfaces of genus two we show that the set $\{ S \in {\mathcal{M}_2}: {\lambda_1}(S) > \frac{1}{4} \}$ is unbounded and disconnects the moduli space ${\mathcal{M}_2}$.
\end{abstract}

\maketitle

\section*{Introduction}
In this paper we identify hyperbolic surfaces with quotients of the Poincar\'{e} upper halfplane $\mathbb{H}$ by discrete torsion free subgroups of PSL$(2, \mathbb{R})$ called {\it Fuchsian groups}. The {\it Laplacian} on $\mathbb H$ is the differential operator $\Delta$ which associates to a $C^2$- function $f$ the function
\begin{equation}
\Delta f(z) = {y^2}(\frac{{\partial^2}f}{{\partial x}^2} + \frac{{\partial^2}f}{{\partial y}^2}).
\end{equation}
For any Fuchsian group $\Gamma$, the induced differential operator on $S = {\mathbb H/ \Gamma}$, $\Delta = {\Delta_S}$ is called the Laplacian on $S$. It is a non-positive operator whose spectrum spec$(\Delta)$ is contained in a smallest interval $( -\infty, -{\lambda_0}(S)] \subset {\mathbb R^{-}} \cup \{0\}$ with ${\lambda_0}(S) \geq 0$. Points in the discrete spectrum will be referred to as an {\it eigenvalue}. In particular this means $\lambda \geq 0$ is an eigenvalue if there exists a non-zero $C^2$-function $ f \in {L^2}(S)$, called a $\lambda$-{\it eigenfunction}, such that $ \Delta f + \lambda f = 0.$ When $0 < \lambda \leq {1/4}$, $\lambda$ is called a {\it small eigenvalue} and $f$ is called a {\it small eigenfunction}.

We shall restrict ourselves to hyperbolic surfaces with finite area. Any such surface $S$ is homeomorphic to a closed Riemann surface $\overline{S}$ of certain genus $g$ from which some $n$ many points are removed. In that case $S$ is called a finite area hyperbolic surface of type $(g, n)$. Each of these $n$ points is called a {\it puncture} of $S$.

The Laplace spectrum of a closed hyperbolic surface $S$ consists of a discrete set:
\begin{equation}
0= {\lambda_0} < {\lambda_1}(S) \leq ... \leq {\lambda_n}(S) \leq ... \infty
\end{equation}
such that ${\lambda_i}(S) \rightarrow \infty$ as $i \rightarrow \infty$. Each number in the above sequence is repeated according to its multiplicity as eigenvalue. The number ${\lambda_i}(S)$ is called the $i$-th eigenvalue of $S$. It is known that the map ${\lambda_i}: {\mathcal{M}_g} \rightarrow {\mathbb{R}}$ that assigns a surface $S \in {\mathcal{M}_g}$ to its $i$-th eigenvalue ${\lambda_i}(S)$ is continuous and bounded \cite{B3}. Hence \begin{equation}
{\Lambda_i}(g) = {\sup_{S \in {\mathcal{M}_g}}} {\lambda_i}(S) < \infty.
\end{equation}
For non-compact hyperbolic surfaces of finite area the spectrum of the Laplacian is more complicated. It consists of both continuous and discrete components (see \cite{I} for detail). However, the part of the spectrum lying in $[0, \frac{1}{4})$ is discrete. Keeping resemblance to above definition for any hyperbolic surface $S$ let us define ${\lambda_1}(S)$ to be the smallest positive number in spec$(\Delta)$. In particular, if ${\lambda_1} < \frac{1}{4}$ then it is an eigenvalue. The function $\lambda_1$, so defined, is bounded by $\frac{1}{4}$ because $S$ has a continuous spectrum on $[\frac{1}{4}, \infty)$. As before we consider the quantity
\begin{equation}
{\Lambda_1}(g, n) = {\sup_{S \in {\mathcal{M}_{g, n}}}} {\lambda_1}(S).
\end{equation}
In \cite{Se} Atle Selberg proved that for any congruence subgroup $\Gamma$ of SL$(2, \mathbb{Z})$
\begin{equation}\label{selberg}
{\lambda_1}(\mathbb{H}/\Gamma) \geq \frac{3}{16}.
\end{equation}
Recall that a congruence subgroup is a discrete subgroup of SL$(2, \mathbb{Z})$ that contains one of the $\Gamma_n$ where
\begin{equation}
{\Gamma_n} = \{
\left(\begin{array}{cc}
a & b\\
c & d \end{array}\right) \in \textrm{SL}(2, \mathbb{Z}): a \equiv 1 \equiv d ~ \textrm{and} ~ b \equiv 0 \equiv c ~ (\text{mod} ~ n)\}
\end{equation}
is the principal congruence subgroup of level $n$. Moreover he conjectured
\begin{conj}\label{sel}
For any congruence subgroup $\Gamma$, ${\lambda_1}(\mathbb{H}/\Gamma) \geq \frac{1}{4}$.
\end{conj}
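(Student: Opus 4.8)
The plan is to recast the spectral inequality in representation-theoretic terms and then attack it through Langlands functoriality. Write a small eigenvalue as $\lambda = s(1-s)$ with $s \in (1/2,1)$; then $\lambda \geq \frac{1}{4}$ is equivalent to $s \in \frac{1}{2}+i\mathbb{R}$. Since a congruence subgroup $\Gamma$ contains some principal congruence subgroup $\Gamma_n$, the surface $\mathbb{H}/\Gamma_n$ is a finite cover of $\mathbb{H}/\Gamma$, so every eigenfunction on $\mathbb{H}/\Gamma$ pulls back to an $L^2$-eigenfunction of the same eigenvalue on $\mathbb{H}/\Gamma_n$, whence $\lambda_1(\mathbb{H}/\Gamma_n)\le\lambda_1(\mathbb{H}/\Gamma)$ and it suffices to prove the bound for the groups $\Gamma_n$. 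After conjugating $\Gamma(N)$ inside $\mathrm{GL}_2(\mathbb{Q})$ into a subgroup of $\Gamma_1(N^2)$, the problem becomes: every Maass cusp form on a congruence subgroup of $\mathrm{SL}(2,\mathbb{Z})$ has eigenvalue $\geq \frac{1}{4}$ (below $\frac{1}{4}$ the $L^2$-spectrum of such a surface is cuspidal apart from the constants, the residual spectrum of $\mathrm{GL}_2/\mathbb{Q}$ being trivial). Such a form generates a cuspidal automorphic representation $\pi = \bigotimes_v \pi_v$ of $\mathrm{GL}_2(\mathbb{A}_{\mathbb{Q}})$ whose archimedean component is the principal series of parameter $s$, and $\pi_\infty$ is tempered precisely when $s \in \frac{1}{2}+i\mathbb{R}$. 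So the conjecture is exactly the archimedean case of the Ramanujan--Petersson conjecture for $\mathrm{GL}_2/\mathbb{Q}$.

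First I would set up the symmetric-power engine. If the lifts $\mathrm{Sym}^m\pi$ are automorphic (isobaric) for every $m\geq 1$, then the Rankin--Selberg $L$-functions $L(s,\mathrm{Sym}^m\pi\times\mathrm{Sym}^m\widetilde\pi)$ are well behaved --- meromorphic continuation, functional equation, a pole at $s=1$, and non-negative Dirichlet coefficients --- and the classical positivity argument of Langlands and Serre (Landau's lemma forces $|\alpha_v/\beta_v|\le q_v^{1/m}$ for the Satake parameters $\alpha_v,\beta_v$; let $m\to\infty$) then yields $|\alpha_v|=|\beta_v|$, hence, with the unitary central character, $|\alpha_v|=|\beta_v|=1$ at every place $v$, including $v=\infty$. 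That is temperedness of $\pi_\infty$, so Selberg's conjecture follows from symmetric-power functoriality for $\mathrm{GL}_2/\mathbb{Q}$ (a fortiori from Langlands functoriality). Unconditionally, the proven cases $\mathrm{Sym}^2$ (Gelbart--Jacquet), $\mathrm{Sym}^3$ (Kim--Shahidi) and $\mathrm{Sym}^4$ (Kim), together with the analytic estimates they yield, give the Kim--Sarnak bound $\lambda_1(\mathbb{H}/\Gamma)\geq \frac{1}{4}-\bigl(\frac{7}{64}\bigr)^2=\frac{975}{4096}$, improving Selberg's $\frac{3}{16}$.

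The step I expect to be the genuine obstacle is the jump from finitely many symmetric powers to all of them in the \emph{Maass} case. For holomorphic modular forms, Newton--Thorne have proved automorphy of every $\mathrm{Sym}^m$ by modularity lifting, but a Maass form provides no Galois-theoretic input and $\mathrm{Sym}^m\pi$ for $m\geq 5$ is open --- this is precisely where a proof of the full conjecture would stall. A complementary route, Selberg's own, uses the Kuznetsov trace formula to tie the exceptional spectrum to sums of Kloosterman sums: Weil's bound $|S(m,n;c)|\le\tau(c)\,c^{1/2}$ already gives $\frac{3}{16}$, and extra cancellation (the Deshouillers--Iwaniec large sieve, the density theorems of Sarnak--Xue and of Huxley, amplification) makes exceptional eigenvalues sparse as the level grows, yet these methods bound the number and location of exceptional eigenvalues without ever excluding them. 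I would therefore present the honest outcome: the conditional implication ``symmetric-power functoriality, hence Ramanujan, for $\mathrm{GL}_2/\mathbb{Q}$ implies Selberg's conjecture'', the unconditional Kim--Sarnak inequality as the current state of the art, and the assertion ``$\mathrm{Sym}^m\pi$ is automorphic for all $m$, or some genuinely new source of archimedean temperedness'' singled out as the missing ingredient.
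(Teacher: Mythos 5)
Conjecture~\ref{sel} is Selberg's eigenvalue conjecture, and the paper does not prove it --- it is stated only as background and motivation (the paper's actual results concern $\lambda_1$ on moduli space, branches of eigenvalues, and multiplicity bounds, none of which bear directly on Selberg's conjecture). So there is no ``paper's own proof'' to compare against: the statement is open, and no proof is claimed anywhere in the article.

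Your write-up correctly recognizes this and is, as an account of the state of the problem, accurate. The reduction from a general congruence subgroup to a principal one via the covering $\mathbb{H}/\Gamma_n \to \mathbb{H}/\Gamma$ is right; the reformulation as the archimedean case of the Ramanujan--Petersson conjecture for $\mathrm{GL}_2/\mathbb{Q}$ is the standard one; the symmetric-power positivity argument (Langlands, Serre) correctly shows that automorphy of $\mathrm{Sym}^m\pi$ for all $m$ would yield temperedness everywhere; and the unconditional outcome you cite, $\lambda_1 \geq \frac{1}{4} - \left(\frac{7}{64}\right)^2 = \frac{975}{4096}$ from Kim and Kim--Sarnak via $\mathrm{Sym}^3$ and $\mathrm{Sym}^4$, is exactly the bound the paper invokes. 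You have also put your finger on the real obstruction --- symmetric-power functoriality beyond $\mathrm{Sym}^4$ for Maass forms, where no Galois-theoretic input is available --- and you are candid that trace-formula and density methods control exceptional eigenvalues without eliminating them. This is precisely the honest thing to say: what you have is a correct conditional implication plus the best known unconditional bound, not a proof, and you say so. Nothing to correct; just be aware that the task here was vacuous on the paper's side, since the paper never attempts a proof of this statement.
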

M. N. Huxley \cite{Hu} proved this conjecture for $\Gamma_n$ with $n \le 6$. Several attempts have been made to prove it (see \cite[Chapter 11]{I} for details) in the general case. The best known bound is $\frac{975}{4096}$ due to Kim and Sarnak \cite{K-S}. This conjecture motivated, in particular, the question of our interest:
\begin{que}\label{ques}
Given any genus $g \ge 2$ does there exist a closed hyperbolic surface of genus $g$ with $\lambda_1$ at least $\frac{1}{4}$ ?
\end{que}
A slightly weaker question than the above one would be: Is ${\Lambda_1}(g) \ge \frac{1}{4}$ ? This question is studied in \cite{BBD} by P. Buser, M. Burger and J. Dodziuk and in \cite{B-M} by R. Brooks and E. Makover. The ideas in \cite{BBD} and \cite{B-M}, in the light of the bound of Kim and Sarnak in \cite{K-S}, provide the following.
\begin{thm}\label{aprox}
Given any $\epsilon>0$, there exists ${N_\epsilon} \in \mathbb{N}$ such that for any $g \ge {N_\epsilon}$ there exist closed hyperbolic surfaces of genus $g$ with ${\lambda_1} \ge \frac{975}{4096} - \epsilon$.
\end{thm}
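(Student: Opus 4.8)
The plan for \tref{aprox} is to combine the arithmetic lower bound of Kim and Sarnak with the compactification and surgery techniques of \cite{BBD} and \cite{B-M}. Set $\mu=\frac{975}{4096}$. The starting point is that \cite{K-S} provides \emph{non-compact} finite area hyperbolic surfaces of arbitrarily large genus with $\lambda_1\ge\mu$: for $n\ge 3$ the principal congruence subgroup $\Gamma_n$ is torsion free, so $S_n=\H/\Gamma_n$ is a surface of type $(g_n,\nu_n)$ with $g_n\to\infty$ and $\nu_n\to\infty$, and since $\Gamma_n$ is a congruence subgroup the Kim--Sarnak improvement of \eqref{selberg} gives $\lambda_1(S_n)\ge\mu$. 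One also uses the classical fact that the systole of $S_n$ tends to infinity, so that the thick part of $S_n$ contains no short closed geodesic and the cusps of $S_n$ are ``deep''.

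The first step is to pass from a non-compact surface $S$ with $\lambda_1(S)\ge\mu$ and large systole to a \emph{closed} hyperbolic surface $\widehat S$ of the same genus with $\lambda_1(\widehat S)\ge\mu-\frac{\epsilon}{2}$. Following \cite{B-M}, I would fill in each puncture of $S$ and re-uniformize, obtaining a closed hyperbolic surface $\widehat S$. Because each cusp of $S$ is deep, the part of $S$ that is changed has area as small as we wish and is replaced in $\widehat S$ by a region of comparably small area; moreover $\widehat S$ acquires no short closed geodesic near the filled punctures. Thus $S$ and $\widehat S$ are isometric off a set of arbitrarily small area, and transplanting test functions between them while controlling the error on that small set yields $\lambda_1(\widehat S)\ge\lambda_1(S)-\frac{\epsilon}{2}$. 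Applied to $S=S_n$ for $n$ large this already produces closed hyperbolic surfaces of genus $g_n$ with $\lambda_1\ge\mu-\frac{\epsilon}{2}$.

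The remaining point is to realize \emph{every} sufficiently large genus, not just the sparse sequence $(g_n)$. For this I would use the genus-changing surgeries of \cite{BBD}, \cite{B-M}, performed \emph{inside the deep part of the cusps} of $S_n$ before compactifying: one modifies $S_n$ there by attaching bounded-complexity hyperbolic pieces (or by regluing cusp ends in pairs), arranged so that the genus of the resulting surface $S'_n$ can be prescribed to be any value in a window of length of order $\nu_n$ above $g_n$, and so that no short separating multicurve is created. Since these modifications are again confined to regions of small area lying at large distance from the thick part, the same transplantation estimate gives $\lambda_1(S'_n)\ge\lambda_1(S_n)-\frac{\epsilon}{2}$. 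As $\nu_n$ grows much faster than $g_{n+1}-g_n$, the windows eventually cover every integer, so compactifying the surfaces $S'_n$ as above gives, for every $g\ge N_\epsilon$, a closed hyperbolic surface of genus $g$ with $\lambda_1\ge\mu-\epsilon$.

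The main obstacle is the transplantation estimate that underlies both the compactification and the genus surgery. One must be sure that every operation is carried out in a region of negligible area and at a definite distance from the support of any candidate small eigenfunction, so that comparing Rayleigh quotients between $S_n$ --- where $\lambda_1\ge\mu$ is known --- and the closed surface costs only $\epsilon$, and so that the surgeries do not create a thin collar producing a spurious eigenvalue below $\mu-\epsilon$. This is precisely the mechanism developed in \cite{BBD} and \cite{B-M}; the only new input is to use $\mu=\frac{975}{4096}$ from \cite{K-S} in place of Selberg's $\frac{3}{16}$.
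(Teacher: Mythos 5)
The paper does not give a self-contained proof of \tref{aprox}; it only sketches the mechanism in \S 4, and that sketch follows \cite{BBD}: one opens the (even number of) cusps of $\mathbb{H}/\Gamma_n$ to geodesic boundaries of length $t$, glues these boundaries \emph{in pairs} to obtain a closed surface $S_t$ of genus $g_n+\nu_n/2$, and then uses lower semi-continuity of $\lambda_1$ along the degenerating family $S_t\to\mathbb{H}/\Gamma_n$ together with the Kim--Sarnak bound. Your route is genuinely different --- you follow the Brooks--Makover idea of \emph{filling in} punctures and re-uniformizing, which keeps the genus equal to $g_n$ rather than $g_n+\nu_n/2$. That difference is not cosmetic, and in the form you state it the argument has two concrete gaps.

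First, the transplantation step. You claim that $S$ and $\widehat S$ (the re-uniformized compactification) ``are isometric off a set of arbitrarily small area.'' That is false: the two hyperbolic metrics live in the same conformal class but differ by a nontrivial conformal factor $e^{2u}$ with $u<0$ \emph{everywhere} (Schwarz--Pick), not just near the punctures; the total area drops by a fixed amount $2\pi\nu_n$, which is not small. One cannot therefore compare Rayleigh quotients by ``controlling the error on a small set.'' What Brooks--Makover actually prove is a Cheeger-constant comparison under a ``large cusps'' hypothesis, which yields a lower bound for $\lambda_1(\widehat S)$ with a fixed (not $\epsilon$-small) loss; it does not give $\lambda_1(\widehat S)\ge\lambda_1(S)-\epsilon/2$. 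The BBD route avoids this because there the boundary surface $\mathbb{H}/\Gamma_n$ sits as an honest degenerate limit of $S_t$ in $\overline{\mathcal{M}_{g}}$ and one can invoke the semicontinuity results of \cite{H}, \cite{C-C}, \cite{Wo}, which do give an $\epsilon$-close bound as $t\to 0$.

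Second, the genus-filling step. Your argument hinges on ``$\nu_n$ grows much faster than $g_{n+1}-g_n$,'' but the orders of magnitude do not support this. For $\Gamma_n$, one has $g_n\sim n^3/24$ and $\nu_n\sim n^2/2$ (up to the Euler factor $\prod_{p\mid n}(1-p^{-2})$), so the consecutive gap $g_{n+1}-g_n\sim n^2/8$ is the \emph{same} order as $\nu_n$; if one uses only prime levels the gaps can even be larger by a factor of $\log n$. So the ``windows'' of length $O(\nu_n)$ do not automatically cover all integers, and the claim needs justification. Moreover, the surgeries you propose --- attaching ``bounded-complexity hyperbolic pieces'' inside the cusps --- typically introduce new connected components in the normalization of the degenerate limit; by the degeneration results, one then gets eigenvalues tending to $0$, which destroys the lower bound on $\lambda_1$. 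In BBD the genus-filling is arranged so that the normalization of the noded limit remains connected; your sketch does not address this constraint.

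So the overall architecture (congruence surfaces with $\lambda_1\ge\mu$, plus surgery, plus spectral stability) is correct and matches the paper's citation to \cite{BBD}, \cite{B-M}, \cite{K-S}, but the specific compactification you chose does not give the sharp $\mu-\epsilon$ bound, and the genus-covering argument is both numerically unjustified and in danger of producing small eigenvalues from disconnections. Following the paper's BBD-style sketch (open cusps to nodes, pair them, use semicontinuity in $\overline{\mathcal{M}_g}$) avoids both problems, at the cost of having to arrange the pairing so as to realize every sufficiently large genus --- which is where the work in \cite{BBD} actually lies.
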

The constant $\frac{975}{4096}$ in the above theorem can be replaced by $\frac{1}{4}$ if conjecture \ref{sel} is true. Hence it is tempting to conjecture:
\begin{conj}\label{conj}
For every $g \geq 2$ there exists a closed hyperbolic surface of genus $g$ whose $\lambda_1$ is at least $\frac{1}{4}$.
\end{conj}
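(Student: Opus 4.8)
The plan is to argue by contradiction, via a homotopy argument over Teichm\"uller space $\mathcal{T}_g$ controlled by the topology of the nodal set of the first eigenfunction in the regime $\lambda_1<\frac{1}{4}$. So assume $\lambda_1(S)<\frac{1}{4}$ for every closed hyperbolic surface $S$ of genus $g$; then each such $\lambda_1(S)$ is a genuine $L^2$-eigenvalue, with an eigenfunction $\phi_S$. The first ingredient is a constraint on the nodal set $Z(\phi_S)=\phi_S^{-1}(0)$, in the spirit of the nodal-domain analysis behind the bounds of Otal and Otal--Rosas: if a nodal domain $\Omega$ of $\phi_S$ were simply connected, it would lift isometrically onto a subdomain of $\mathbb{H}$, whence $\lambda_1(S)=\lambda_1^{\mathrm{Dir}}(\Omega)\ge\lambda_0(\mathbb{H})=\frac{1}{4}$ by Dirichlet domain monotonicity, contradicting $\lambda_1(S)<\frac{1}{4}$. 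Since Courant's theorem gives exactly two nodal domains, $Z(\phi_S)$ must contain an essential simple closed curve; and when $\lambda_1(S)$ is \emph{simple} (so that $\phi_S$ is unique up to sign and varies continuously with $S$), the isotopy class of the decomposition of $S$ into its two nodal subsurfaces is a well-defined discrete invariant $\nu(S)$.

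Second, I would exhibit two surfaces with distinct invariant. Fix a separating simple closed curve $\alpha_0$ on the reference surface, pick a curve $\beta$ with $i(\alpha_0,\beta)\ne 0$, and set $\alpha_1=T_\beta(\alpha_0)$, where $T_\beta$ is the Dehn twist about $\beta$, so that $\alpha_0$ and $\alpha_1$ are non-isotopic separating curves. Choosing $S_0$ (resp.\ $S_1$) sufficiently close to the stable curve obtained by pinching $\alpha_0$ (resp.\ $\alpha_1$), the standard behaviour of the spectrum near the Deligne--Mumford boundary shows that $\lambda_1$ is the unique small eigenvalue there, is simple, and has an eigenfunction close to $\pm1$ on the two sides of $\alpha_0$ (resp.\ $\alpha_1$); hence $\nu(S_0)=[\alpha_0]\ne[\alpha_1]=\nu(S_1)$, while $S_0$ and $S_1$ both avoid the multiplicity locus $C=\{\lambda_1=\lambda_2\}$ and the locus $D$ where $0$ is a critical value of $\phi_S$. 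If $S_0$ and $S_1$ could be joined by a path inside $\{\lambda_1<\frac{1}{4}\}\setminus(C\cup D)$, then the invariant $\nu$ would be constant along it — contradicting $\nu(S_0)\ne\nu(S_1)$ — and, since $\lambda_1<\frac{1}{4}$ was assumed throughout, this would finish the proof.

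Everything therefore reduces to showing that $C\cup D$ does not separate $S_0$ from $S_1$ inside $\{\lambda_1<\frac{1}{4}\}$. For $C$ this is the expectation, going back to von Neumann and Wigner, that multiplicity of $\lambda_1$ is a codimension-$\ge 2$ phenomenon, so that $C$ cannot disconnect the $(6g-6)$-dimensional manifold $\mathcal{T}_g$; for $D$, which a priori is only of codimension one, one must use the Otal-type constraints on small eigenfunctions to argue that a crossing of $D$ inside $\{\lambda_1<\frac{1}{4}\}$ performs only topology-preserving surgery on $Z(\phi_S)$ and hence leaves $\nu$ unchanged. \textbf{This is where I expect the real difficulty to lie}: proving transversality of the family of hyperbolic Laplacians to the multiplicity stratum, and controlling how the nodal picture of the first small eigenfunction can degenerate, are precisely the points that resist a general argument — which is why the statement remains a conjecture. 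Genus $2$ is favourable: $\dim\mathcal{T}_2=6$, the combinatorics of multicurves is elementary, and the symmetric Bolza surface provides an explicit genus-$2$ example with $\lambda_1>\frac{1}{4}$ (a computation going back to Jenni) to anchor the analysis; these features make the crossing argument manageable and, as recorded in the abstract, yield not merely the existence assertion but the unboundedness and the disconnection of $\mathcal{M}_2$. An alternative attack for general $g$ — gluing a fixed building block of large $\lambda_1$ along closed geodesics taken long enough that they carry only thin collars, so that no eigenvalue below $\frac{1}{4}$ is produced — runs into the same obstacle of making such gluing estimates uniform.
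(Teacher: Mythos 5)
First, a point of orientation: the statement you were asked to prove is labelled Conjecture~\ref{conj} in the paper, and the paper does \emph{not} prove it — it is presented as an open problem, settled for $g=2$ only because one already knows an explicit example (the Bolza surface, with $\lambda_1\approx 3.8$, going back to Jenni and computed precisely by Strohmaier--Uski). There is therefore no paper proof to compare against. What the paper does prove are the adjacent facts: Theorem~\ref{e2} (for $g=2$ the set $\mathcal{B}_2(\frac14)$ is unbounded and disconnects $\mathcal{M}_2$), Theorem~\ref{br} (branches of eigenvalues in $\mathcal{T}_g$ starting as $\lambda_1$ that eventually exceed $\frac14$), and Theorem~\ref{aprox} (via \cite{BBD}, \cite{B-M}, Kim--Sarnak, existence of genus-$g$ surfaces with $\lambda_1\geq\frac{975}{4096}-\epsilon$ for large $g$). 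Your proposal is in fact an honest reconstruction of the mechanism behind Theorem~\ref{e2}, and you correctly diagnose why it does not extend. Two calibrations against the paper's actual $g=2$ argument are worth making.

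\textbf{In genus two the loci $C$ and $D$ are empty, not merely avoidable.} You frame the problem as steering a path from $S_0$ to $S_1$ around the multiplicity locus $C$ and the locus $D$ where $0$ is a critical value of $\phi_S$, and you invoke a von Neumann--Wigner codimension heuristic for $C$. In genus $2$ neither set intersects $\{\lambda_1\le\frac14\}$ at all, and no transversality is needed. Otal's multiplicity bound (Proposition~\ref{otal}) gives multiplicity $\le 2g-3+n=1$, so simplicity of $\lambda_1$ is automatic: $C=\emptyset$. And the Euler--Poincar\'e computation $\chi(S)=\chi(S^+)+\chi(S^-)+\chi(\mathcal{Z}(\phi_S))$, combined with Otal's lemma that each nodal domain has negative Euler characteristic, forces $\chi(\mathcal{Z}(\phi_S))=0$; since by Cheng's theorem every critical point of the nodal graph has degree at least $4$ and hence contributes strictly negatively to $\chi$, there are no critical points, so $\mathcal{Z}(\phi_S)$ is a disjoint union of smooth simple closed curves: $D=\emptyset$ too. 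This makes the isotopy class $\nu(S)$ genuinely well-defined and locally constant on all of $\mathcal{M}_2\setminus\mathcal{B}_2(\frac14)$ (Claim~\ref{isotopy}), with no surgery across $D$ to control. The pinching argument you sketch is exactly Proposition~\ref{diffisotopy}, producing two surfaces with distinct $\nu$ — and this is what yields the disconnection.

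\textbf{For $g\ge 3$ your gap is real and the paper quantifies it.} The Colbois--Colin de Verdi\`ere theorem quoted in \S\ref{mult} produces, for every $g\ge 3$, surfaces on which $\lambda_1$ is small with multiplicity $\lfloor\frac{1+\sqrt{8g+1}}{2}\rfloor\ge 3$, so $C$ is nonempty and there is no transversality escape hatch. Moreover once $\chi(S)\le -4$ the Euler-characteristic identity no longer forces $\chi(\mathcal{Z})=0$, so the nodal set can acquire genuine graph vertices and the isotopy invariant ceases to be discrete. Your alternative gluing strategy runs into the same known difficulty (cf.\ \cite{BBD}, \cite{B-M}), which the paper circumvents only at the level of eigenvalue \emph{branches} (Theorem~\ref{br}), not of $\lambda_1$ itself. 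So you have located the obstruction correctly; it is precisely why the statement remains a conjecture, and the honest thing — which you did — is to say so rather than claim a proof.
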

\begin{rem}
Observe that even if Selberg's conjecture (conjecture \ref{sel}) is true, theorem \ref{aprox} do not provide a positive answer to conjecture \ref{conj}. However it would answer positively the weaker version of our question i.e. it would imply ${\Lambda_1}(g) \geq \frac{1}{4}$, for large values of $g$.
\end{rem}
The existence of genus two hyperbolic surfaces with $\lambda_1 > \frac{1}{4}$ has been known in the literature for sometime \cite{Je}. It is known that the {\it Bolza} surface has $\lambda_1$ approximately $3.8$ (see \cite{S-U} for more details). We consider the subset $\mathcal{B}_2(\frac{1}{4}) =  \{ S \in {\mathcal{M}_2}: {\lambda_1}(S) > \frac{1}{4} \}$ of the moduli space $\mathcal{M}_2$. From the continuity of $\lambda_1$ it is clear that $\mathcal{B}_2(\frac{1}{4})$ is open. Our first result provides better understanding of this set.
\subsection{Eigenvalue branches}
Recall that the moduli space $\mathcal{M}_g$ is the quotient of $\mathcal{T}_g$ by the {\it Teichm\"{u}ller modular group} $M_g$ (see \cite{B3}). We are shifting from the moduli space to the Teichm\"uller space mainly because we wish to talk about analytic paths which involves coordinates and on $\mathcal{T}_g$ one has the Fenchel-Nielsen coordinates (given a pants decomposition) which is easy to describe.

Let $\gamma:[0, 1] \to {\mathcal{T}_2}$ be an analytic path. Since, in this case, $\lambda_1$ is simple as long as small, the function ${\lambda_1}({S^t})$ (${S^t}=\gamma(t)$) is also analytic (see theorem \ref{b-c}) if ${\lambda_1}({S^t}) \le \frac{1}{4}$ for all $t \in [0, 1]$. For higher genus $\lambda_1$ may not be simple even if small (see \S\ref{mult}). Therefore, for an analytic path $\gamma: [0, 1] \to {\mathcal{T}_g}$, ${\lambda_1}({S^t})$ is continuous but need not be analytic even if ${\lambda_1}({S^t}) \le \frac{1}{4}$ for all $t \in [0, 1]$. However we have the following result from \cite[Theorem 14.9.3]{B3}:
\begin{thm}\label{b-c}
Let $(S^t)_{t \in I}$ be a real analytic path in $\mathcal{T}_g$. Then there exist real analytic functions ${\lambda^t_k}: I \rightarrow \mathbb{R}$ such that for each $t \in I$ the sequence $({\lambda^t_k})$ consist of all eigenvalues of $S^t$ (listed with multiplicities, though not in increasing order).
\end{thm}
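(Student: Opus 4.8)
The plan is to reduce the statement to the classical Rellich--Kato theory of analytic perturbation for self-adjoint operators with compact resolvent. The first step is to transplant the whole family onto a single fixed surface. A real analytic path $(S^t)_{t\in I}$ in $\mathcal{T}_g$ is, by the very definition of the real analytic structure on Teichm\"uller space, represented by a real analytic family of hyperbolic metrics $(h_t)_{t\in I}$ on one fixed smooth closed surface $\Sigma$ of genus $g$; real analyticity of $t \mapsto h_t$ follows from the real analytic dependence of the uniformizing metric on the complex structure (solve the Gauss curvature equation with parameters and apply the analytic implicit function theorem together with elliptic regularity). One thus obtains a real analytic family of Laplace--Beltrami operators $\Delta_{h_t}$ acting on functions on $\Sigma$.

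These operators live on the varying Hilbert spaces $L^2(\Sigma, dV_{h_t})$, so the second step is to conjugate them onto one fixed space. Writing $dV_{h_t} = \rho_t\, dV_{h_0}$ with $\rho_t > 0$ real analytic in $t$, the multiplication maps $U_t f = \rho_t^{1/2} f$ are unitary $L^2(\Sigma, dV_{h_t}) \to L^2(\Sigma, dV_{h_0}) =: H$, and $A_t := -U_t \Delta_{h_t} U_t^{-1}$ is a family of nonnegative self-adjoint operators on $H$ with common domain the Sobolev space $H^2(\Sigma)$ (here compactness of $\Sigma$ is used so that the domain does not move). Computing $A_t$ in local coordinates shows that $t \mapsto A_t u$ is real analytic for each fixed $u \in H^2(\Sigma)$, i.e. $(A_t)$ is a real analytic family of type (A) in Kato's sense; since $\Sigma$ is compact each $A_t$ has compact resolvent and hence purely discrete spectrum, which coincides with the spectrum of $\Delta_{h_t}$ and thus with the eigenvalue spectrum of $S^t$.

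The third step is to invoke the perturbation theorem: for a real analytic family of self-adjoint operators of type (A) with compact resolvent on a real interval, the eigenvalues and their eigenprojections can be organized into globally real analytic functions $\lambda_k^t$ together with a real analytic orthonormal eigenbasis, with $(\lambda_k^t)_k$ exhausting the spectrum of $A_t$ with multiplicity for every $t$ (Kato, \emph{Perturbation Theory for Linear Operators}, Ch.~VII; Rellich's theorem). The one point where self-adjointness is essential is the absence of genuine branch points: complexifying the parameter gives locally a convergent Puiseux-type expansion for the perturbed eigenvalues, but self-adjointness on the real axis forces the fractional exponents to be trivial, so the local branches are honestly analytic; these are then continued along $I$ and patched together, which is possible precisely because $I$ is an interval. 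Pulling everything back through the unitaries $U_t$ converts the $\lambda_k^t$ into the desired real analytic enumeration of the eigenvalues of $S^t$.

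The main obstacle, and the place where a careful argument is genuinely needed, is this global analytic parametrization when eigenvalue branches cross; it is precisely why the functions $\lambda_k^t$ cannot be taken in increasing order, and hence why the theorem is stated the way it is. A secondary technical point is verifying real analyticity of $t \mapsto h_t$ (equivalently of $t \mapsto \rho_t$) carefully enough to conclude that $(A_t)$ is of type (A); this is routine but does use the analytic structure on $\mathcal{T}_g$ together with analytic elliptic regularity for the uniformization equation.
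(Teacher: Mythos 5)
The paper offers no proof of this statement; it is quoted directly as Theorem~14.9.3 of Buser's book \cite{B3}. Your sketch is correct and is essentially the argument of the cited source: realize the path as a real analytic family of hyperbolic metrics on a fixed closed surface, conjugate the Laplacians onto a single $L^2$ space via the volume-density factors to obtain a Kato type~(A) real analytic family of self-adjoint operators with fixed domain and compact resolvent, and then invoke Rellich's theorem on globally real analytic eigenvalue branches for one-parameter families.
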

Each function $\lambda_k^t$ is called a branch of eigenvalues along $S^t$. More precisely
\begin{defn}
Let $\alpha: [0, 1] \to {\mathcal{T}_g}$ be an analytic path. An analytic function ${\lambda_t}: [0, 1] \to \mathbb{R}$ is called a branch of an eigenvalue along $\alpha$ if, for each $t$, ${\lambda_t}$ is an eigenvalue of $\alpha(t)$. If ${\lambda_0}= {\lambda_i}(\alpha(0))$ then we shall say that $\lambda_t$ is a branch of eigenvalues along $\alpha$ that starts as $\lambda_i$. If the underlying path $\alpha$ is fixed then we shall skip referring to it.
\end{defn}
Here, instead of considering $\lambda_1$, we consider branches of eigenvalues that start as $\lambda_1$ and modify question \ref{ques} as:
\begin{que}
For any $g \ge 2$ does there exist branches of eigenvalues in ${\mathcal{T}_g}$ that start as $\lambda_1$ and exceeds $\frac{1}{4}$ eventually ?
\end{que}
Fortunately this modified question turns out to be much easier than the original one and we have a positive answer to it (see theorem \ref{br}).
\subsection{Multiplicity}\label{mult}
For any eigenvalue $\lambda$ of $S$, the dimension of $\ker(\Delta - \lambda.\id)$ is called the multiplicity of $\lambda$. If the multiplicity of $\lambda_1$ were one for all closed hyperbolic surfaces of genus $g$ then theorem \ref{br} would have showed the existence of surfaces with ${\lambda_1} > \frac{1}{4}$ implying conjecture \ref{conj}. However this is not the case and in fact the following is proved in \cite{C-V}:
\begin{thm}
For every $g \geq 3$ and $n\geq0$ there exists a surface $S \in {\mathcal{M}_{g, n}}$ such that ${\lambda_1}(S)$ is small and has multiplicity equal to the integral part of $\frac{1+\sqrt{8g+1}}{2}$.
\end{thm}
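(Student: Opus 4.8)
The plan is to convert the problem into one about a finite graph, realise that graph by a degeneration inside Teichm\"{u}ller space, and then tune parameters so that a whole cluster of small eigenvalues collapses to a single value, in the spirit of Colin de Verdi\`{e}re's construction of metrics with prescribed low spectrum.

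Set $N=\lfloor\frac{1+\sqrt{8g+1}}{2}\rfloor$ and $m=N+1$; this gives in particular $\binom{m-1}{2}\le g$, that is, $b_1(K_m)=\binom{m}{2}-m+1=\binom{m-1}{2}\le g$, where $K_m$ is the complete graph on $m$ vertices. The combinatorial Laplacian $mI-J$ of $K_m$ has spectrum $\{0\}$, simple, together with the value $m$ of multiplicity $m-1=N$. The first step is topological: since $b_1(K_m)\le g$ and every vertex of $K_m$ has degree $m-1\ge 3$ (for $g\ge3$ one has $N\ge3$, hence $m\ge4$), one can cut a closed surface $\overline{S}$ of genus $g$ into $m$ subsurfaces $Y_1,\dots,Y_m$, with $Y_v$ of genus $h_v$ and with $m-1$ boundary circles, where $\sum_v h_v=g-\binom{m-1}{2}$, glued along one simple closed curve $\gamma_{vw}$ for each edge $vw$ of $K_m$; when $n>0$ the punctures are placed in the interiors of the $Y_v$, which only makes the relevant Euler characteristics more negative. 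Since $\chi(Y_v)=2-2h_v-(m-1)<0$, each $Y_v$ carries finite-area hyperbolic structures with cusps, and opening up the resulting nodes to short collars produces a family $(S_\ve)_{\ve>0}$ of hyperbolic surfaces of type $(g,n)$ in which the curves $\gamma_{vw}$ are pinched as $\ve\to0$.

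Next I would read the bottom of the spectrum of $S_\ve$ off $K_m$. By separation of variables across the thin collars, a function whose eigenvalue tends to $0$ is essentially constant, say $a_v$, on each $Y_v$ and affine in the collar coordinate across each collar, and the Rayleigh quotient of such a combinatorial function is, to leading order, $\frac{\sum_{vw}c_{vw}(\ve)\,|a_v-a_w|^2}{\sum_v A_v\,a_v^2}$, where $A_v$ is the limiting area $2\pi|\chi(Y_v)|$ and $c_{vw}(\ve)$ is the conductance of the collar around $\gamma_{vw}$ --- a continuous increasing function of the pinched length that tends to $0$. A Dirichlet--Neumann bracketing of $S_\ve$ into pieces and collars, together with the observation that a function orthogonal to the $m$-dimensional space of combinatorial functions must either oscillate around some collar (of bounded circumference) or be non-constant on some fixed piece, shows that exactly $m$ eigenvalues of $S_\ve$ are $o(1)$ --- namely $\lambda_0=0$ and $\lambda_1(S_\ve)\le\cdots\le\lambda_{m-1}(S_\ve)$ --- while the others stay above a uniform positive constant; moreover the $m-1=N$ small positive eigenvalues, suitably rescaled, converge to the $N$ positive eigenvalues of the weighted graph Laplacian of $K_m$ with vertex weights $A_v$ and edge conductances $c_{vw}$. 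In particular $\lambda_1(S_\ve)$ is small for $\ve$ small, with multiplicity at most $N$.

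It remains to make those $N$ small positive eigenvalues coincide, which forces $\lambda_1$ to have multiplicity exactly $N$ since the next eigenvalue $\lambda_{N+1}(S_\ve)$ stays bounded away from $0$. When $g=\binom{m-1}{2}$ one may take all $h_v=0$, all $Y_v$ isometric and the gluing symmetric, so that $S_\ve$ admits an isometric action of the symmetric group $S_m$; the $m$-dimensional low-energy cluster then splits as the trivial representation (carrying $\lambda_0$) plus the $(m-1)$-dimensional standard representation, which is irreducible, so the $N$ small positive eigenvalues are automatically equal and one is done. For general $g$ the pieces have distinct Euler characteristics and this symmetry is unavailable, so instead one follows Colin de Verdi\`{e}re's transversality argument: the map from a neighbourhood of $(S_\ve)$ in $\mathcal{T}_{g,n}$ to the rescaled spectral cluster is shown to be a submersion onto the stratum where the cluster has the desired multiplicity, using the abundant deformation parameters of a genus-$g$ surface (the handle parameters of the $Y_v$ and the collar lengths); since smallness is an open condition preserved under small deformations, this produces a hyperbolic surface $S$ of type $(g,n)$ with $\lambda_1(S)$ small of multiplicity exactly $N=\lfloor\frac{1+\sqrt{8g+1}}{2}\rfloor$. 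I expect this last step to be the main obstacle: one must control the corrections to the combinatorial model precisely enough to verify that the spectral-cluster map is genuinely submersive and that no extra small eigenvalue appears along the deformation.
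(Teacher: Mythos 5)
The paper does not give a proof of this theorem; it is quoted from [C-V] (Colbois and Colin de Verdi\`{e}re). Your sketch reproduces the general strategy of that reference: pinch a separating system of curves whose dual graph is $K_m$ (with $m=N+1$ and $N=\lfloor\frac{1+\sqrt{8g+1}}{2}\rfloor$ the largest integer with $\binom{N}{2}\le g$), let the low spectrum converge to a weighted graph Laplacian under degeneration, and then collapse the $(m-1)$-fold cluster. The Betti number count $b_1(K_m)=\binom{m-1}{2}\le g$, the Euler-characteristic bookkeeping, and the identification of the multiplicity-$N$ eigenvalue of the $K_m$-Laplacian are all correct.

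Two caveats. First, the symmetric shortcut you propose for $g=\binom{m-1}{2}$ does not actually apply beyond $g=3$: an $(m-1)$-holed hyperbolic sphere with all boundary lengths equal admits at most a dihedral group of isometries permuting its boundary circles once $m-1\ge 4$, so the isometry group of $S_\ve$ is a proper subgroup of $S_m$, and the restriction of the $(m-1)$-dimensional standard representation to that subgroup is in general reducible --- irreducibility alone no longer forces the degeneracy. (For $m=4$ the pieces are equilateral pairs of pants, $S_4$ does act, and your symmetry argument does give $g=3$.) Second, and as a consequence, the transversality step is not an optional alternative for ``general $g$'' but the mechanism for essentially all genera, and it is the technical core of [C-V]: one must show that the Fenchel--Nielsen parameters alone (the only deformations available in constant curvature, as opposed to Colin de Verdi\`{e}re's earlier variable-curvature setting) sweep out a family meeting the high-multiplicity stratum of symmetric matrices transversally, while uniformly controlling the error between the actual low spectrum of $S_\ve$ and the combinatorial model as $\ve\to 0$. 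You correctly flag this as the main obstacle; it cannot be left as a black box if the argument is to be complete.
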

For $g \geq 3$ the above bound is more than $3$. Hence our methods in theorem \ref{e2} for $g =2$ do not work for $g \geq 3$. In \cite{O} the following upper bound on the multiplicity of a small eigenvalue is proved
\begin{prop}\label{otal}
Let $S$ be a finite area hyperbolic surface of type $(g, n)$. Then the multiplicity of a small eigenvalue of $S$ is at most $2g-3+n$.
\end{prop}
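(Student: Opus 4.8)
The plan is to reduce the multiplicity bound to a statement about the topology of nodal sets of small eigenfunctions. Fix a small eigenvalue $\lambda\le\tfrac14$, let $E$ be its eigenspace and $m=\dim E$ its multiplicity; since we stay inside a single eigenspace, every $f\in E\setminus\{0\}$ is a genuine small eigenfunction and no min--max argument is needed. By Cheng's local structure theorem the nodal set $Z(f)=f^{-1}(0)$ is, after a generic choice of $f$, a finite embedded graph whose complementary regions are the nodal domains. I would impose $m-1$ vanishing conditions to produce $0\ne f\in E$ with prescribed zeros, choosing the points one at a time --- each on a nodal arc created by the previous choice --- so that every new condition genuinely raises the number of nodal domains; the aim is then to contradict a topological upper bound on the number of nodal domains of a small eigenfunction unless $m\le 2g-3+n$.

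The analytic heart of the matter is the identity $\lambda_0(\mathbb{H})=\tfrac14$: the bottom of the $L^2$-spectrum of the hyperbolic plane is $\tfrac14$, while the bottom of the Dirichlet spectrum of any proper subdomain of $\mathbb{H}$ is strictly larger. From this I would first deduce that no nodal domain $\Omega$ of a small eigenfunction is a topological disk --- otherwise the inclusion $\Omega\hookrightarrow S$ lifts to an embedding $\widetilde\Omega\subsetneq\mathbb{H}$ carrying a positive first Dirichlet eigenfunction with eigenvalue $\lambda\le\tfrac14$, which is impossible. For the sharp constant one needs finer restrictions: the nodal graph should be \emph{incompressible} --- its loops essential and non-peripheral in $S$, no complementary region a cusp neighbourhood, no two opposite-sign nodal domains glued across a single arc into a disk or half-plane --- all proved in the same spirit by exhibiting a forbidden region whose Dirichlet eigenvalue would be $\le\tfrac14$. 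One must also control the $n$ cusps: deep in a cusp a small $L^2$-eigenfunction is, after dropping exponentially small Fourier modes, a single real power of the height coordinate, hence non-oscillating, so $Z(f)$ meets each cusp in finitely many arcs and the disk-exclusion argument applies there too.

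With these topological properties in hand the rest is Euler-characteristic bookkeeping: from $2-2g-n=\chi(S)=\chi(Z(f))+\sum_i\chi(\Omega_i)$ with each $\chi(\Omega_i)\le 0$, together with the incompressibility restrictions --- which rule out the annular and peripheral complementary pieces that would otherwise inflate the count and which pin down the cycles of $Z(f)$ --- one bounds the number of nodal domains of any small eigenfunction linearly in $-\chi(S)$; combined with the lower bound coming from the choice of the $m-1$ points this yields $m\le 2g-3+n$. The genuinely hard step is the second paragraph: establishing the incompressibility of the nodal graph, not merely the absence of disk nodal domains, and controlling the behaviour at the cusps, which is where the spectral gap of $\mathbb{H}$ has to be used with care; after that, arranging the point selection so that the combinatorics produce exactly $2g-3+n$ rather than a weaker linear bound is the remaining delicate point. (That this estimate is far from optimal is already visible from \cite{C-V}, where the maximal multiplicity is only of order $\sqrt g$.)
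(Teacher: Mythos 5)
The paper contains no proof of this proposition: it is quoted from Otal \cite{O}, and the present paper extracts from that reference (as lemma~\ref{O}) only the finite-graph property and the negative Euler characteristic of nodal domains. So there is no in-paper argument to compare your sketch against. That said, your second paragraph is on target as a reconstruction of Otal's key lemma: the spectral gap $\lambda_0(\mathbb{H}) = \frac{1}{4}$ together with domain monotonicity forbids nodal domains of a small eigenfunction from lifting to proper subdomains of $\mathbb{H}$, and this is exactly what yields $\chi(\Omega) \le -1$ for every nodal domain $\Omega$. (Note that in your bookkeeping step you write $\chi(\Omega_i)\le 0$; the strict inequality $\chi(\Omega_i)\le -1$ is essential, and is what lemma~\ref{O} records.) The care you flag at the cusps is also appropriate.

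The gap is in the first paragraph, and it is real. You posit choosing $m-1$ points one at a time, each on a nodal arc created by the previous choice, ``so that every new condition genuinely raises the number of nodal domains.'' But each time you pass to the smaller subspace of $E$ vanishing at the new point, the eigenfunction you extract is a different function with an a priori unrelated nodal set; no monotonicity of the nodal-domain count along this process is offered, and none is automatic. The rigorous form of the dimension-count idea is to concentrate all conditions at a single point $p$: vanishing to order $k$ is $2k-1$ linear conditions, so $\dim E \ge 2k$ produces, by Cheng's theorem, a $2k$-prong vertex in the nodal set. Feeding $\chi(\mathcal{Z}) \le 1-k$ into $\chi(S)=\chi(\mathcal{Z})+\sum_i\chi(\Omega_i)$ with $\chi(\Omega_i)\le -1$ and at least two nodal domains forces $k \le 2g+n-3$, hence only $\dim E \le 4g+2n-5$ --- roughly twice the claimed $2g-3+n$, and for $(g,n)=(2,0)$ giving $3$ rather than the required $1$. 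As written, your proposal therefore cannot reach the sharp constant, and the finer mechanism by which Otal attains it would have to be supplied from \cite{O} itself.
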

Our last result is an improvement of this result for hyperbolic surfaces of type $(0, n)$ (see theorem \ref{puncturedsphere}).
\section{results}
As mentioned before, it is known that there are closed hyperbolic surfaces of genus two with $\lambda_1 > \frac{1}{4}$ (in fact with $> 3.8$). Our first result, in some sense, describes how large is the open subset
\begin{equation*}
{\mathcal{B}_2}(\frac{1}{4})= \{ S \in {\mathcal{M}_2}: {\lambda_1}(S) > \frac{1}{4} \}.
\end{equation*}
\begin{thm}\label{e2}
${\mathcal{B}_2}(\frac{1}{4})$ is an unbounded set that disconnects ${\mathcal{M}_2}$.
\end{thm}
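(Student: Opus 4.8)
The plan is to analyze the closed set $\mathcal M_2\setminus\mathcal B_2(\tfrac14)=\{S:\lambda_1(S)\le\tfrac14\}$ near the ends of $\mathcal M_2$ and, for the disconnection, by means of a discrete invariant attached to the nodal set of the first eigenfunction, which is available thanks to Otal's multiplicity bound (Proposition \ref{otal}). For \emph{unboundedness}, recall that by Mumford's compactness theorem a sequence in $\mathcal M_2$ leaves every compact set precisely when its systole tends to $0$, so it suffices to produce surfaces with arbitrarily short geodesics that still satisfy $\lambda_1>\tfrac14$; I would obtain these by pinching a \emph{non-separating} curve. Fix a complete finite-area hyperbolic surface $S_\infty$ of type $(1,2)$ whose spectrum contains no point of $(0,\tfrac14)$, i.e.\ with no eigenvalue below $\tfrac14$. (Its existence is the first of two key inputs: one may take an arithmetic surface of this signature for which Selberg's $\tfrac14$-conjecture -- Conjecture \ref{sel} -- is known, e.g.\ $\mathbb H/\Gamma_0(11)$ (torsion free, genus $1$, $2$ cusps), or argue among type-$(1,2)$ surfaces by a deformation/genericity argument, using that a small eigenvalue of such a surface is simple by Proposition \ref{otal}.) For small $\ell>0$ let $S^\ell\in\mathcal M_2$ be a hyperbolic surface near the noded surface $S_\infty$ in $\overline{\mathcal M_2}$, obtained by opening the node to a closed geodesic $\alpha_\ell$ of length $\ell$; then $S^\ell\to S_\infty$ geometrically while $[S^\ell]\to\infty$ in $\mathcal M_2$. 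I claim $\lambda_1(S^\ell)>\tfrac14$ for all small $\ell$. The collar of a short geodesic carries no $L^2$-spectrum below $\tfrac14$: expanding in Fourier modes in the angular variable, the $0$-th mode reduces to a one-dimensional operator of the shape $-g''+(\tfrac14+\mathrm{l.o.t.})\,g$ on an interval of length $\asymp\log(1/\ell)$, and the higher modes have much larger bottom. Hence a would-be eigenfunction of $S^\ell$ with eigenvalue $<\tfrac14$ cannot concentrate in the collar of $\alpha_\ell$ and must persist, in the geometric limit, as a nonzero $L^2$-eigenfunction of $S_\infty$ with eigenvalue $<\tfrac14$ -- contradicting the choice of $S_\infty$. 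Running the same localization quantitatively, and using in addition that $S_\infty$ has no eigenvalue (nor threshold resonance) at $\tfrac14$, excludes an eigenvalue of $S^\ell$ sitting at or just below $\tfrac14$, so $S^\ell\in\mathcal B_2(\tfrac14)$ and $\mathcal B_2(\tfrac14)$ is unbounded.

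For the \emph{disconnection}, I would use that $\mathcal M_2$ goes to infinity in two topologically distinct ways -- by pinching a separating, respectively a non-separating, simple closed curve -- and that the first eigenfunction records which one whenever $\lambda_1\le\tfrac14$. A small eigenvalue of a genus-two surface $S$ has multiplicity $2g-3+n=1$ by Proposition \ref{otal}, hence is simple; being $\lambda_1$, its eigenfunction $\phi$ has exactly two nodal domains $\Omega^{\pm}$ by Courant's theorem, and by the incompressibility of nodal sets of small eigenfunctions established in \cite{O} each $\Omega^{\pm}$ injects into $\pi_1(S)$. Incompressibility, a Euler-characteristic count (with $\chi(S)=-2$), and the classification of incompressible subsurfaces of a genus-two surface together pin down a short list of admissible configurations for the nodal set $Z$, to each of which one attaches a discrete invariant $\tau(S)$ that records, in particular, whether $Z$ is carried by a separating multicurve or by a non-separating curve. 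A surface with a short enough separating geodesic has $\lambda_1<\tfrac14$ by the standard test function supported across its collar, and there $\phi$ is essentially $\pm1$ on the two one-holed tori, realizing one value of $\tau$. Repeating the construction of the first paragraph, but now with $S_\infty$ of type $(1,2)$ that \emph{does} carry a small eigenvalue (e.g.\ $S_\infty$ having itself a short separating geodesic), produces closed genus-two surfaces, at infinity in $\mathcal M_2$, with $\lambda_1<\tfrac14$ realizing a different value of $\tau$.

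It then remains to show that $\tau$ is constant on path components of $\{\lambda_1\le\tfrac14\}$; granting this, the two families just produced carry different values of $\tau$ and hence lie in distinct components of $\mathcal M_2\setminus\mathcal B_2(\tfrac14)$, so $\mathcal B_2(\tfrac14)$ disconnects $\mathcal M_2$. On the open locus $\{\lambda_1<\tfrac14\}$ the simple eigenpair $(\lambda_1,\phi_S)$ depends real-analytically on $S$ (Theorem \ref{b-c}), so $Z_S$ varies continuously and its isotopy type can change only at parameters where $\phi$ acquires a degenerate zero; the point is that, since the two nodal domains stay incompressible throughout the deformation, no such transition can interchange the separating and non-separating values of $\tau$ -- doing so would force, at the critical parameter, a complementary region that is compressible (a disk, or a non-annular region of nonnegative Euler characteristic), which \cite{O} forbids. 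The boundary locus $\{\lambda_1=\tfrac14\}$ is then handled by a limiting form of the same dichotomy.

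The two places where I expect the work to concentrate are the following. In the unboundedness part the threshold $\tfrac14$ is exactly the bottom of the continuous spectrum of the noded limit, so the localization argument has to be pushed all the way to the edge in order to yield the \emph{strict} inequality $\lambda_1(S^\ell)>\tfrac14$, not merely $\lambda_1(S^\ell)\to\tfrac14$. In the disconnection part the crux is the local constancy of $\tau$: one must show that Otal's incompressibility genuinely obstructs the separating $\leftrightarrow$ non-separating transition of the nodal set across a degenerate zero of the eigenfunction. This last point is where the topological method is essential and is, I think, the heart of the argument.
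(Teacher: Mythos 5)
Your treatment of the \emph{disconnection} half is essentially the paper's argument: use Otal's multiplicity bound (Proposition \ref{otal}) to get a well-defined first eigenfunction $\phi_S$ on $\{\lambda_1\le\tfrac14\}$, use the negative-Euler-characteristic constraint on nodal domains (Lemma \ref{O}) plus $\chi(S)=-2$ to force $\mathcal Z(\phi_S)$ to be a multicurve of one of two isotopy types, prove local constancy of the isotopy type, and exhibit both types by pinching. The paper's local constancy step (Claim \ref{isotopy}) is a soft tubular-neighborhood argument rather than your finer analysis of degenerate zeros, and the two admissible types are ``three non-separating curves cutting $S$ into two pairs of pants'' versus ``one separating curve cutting $S$ into two one-holed tori'' (your ``separating multicurve versus non-separating curve'' phrasing is a bit off --- both nodal sets separate $S$, as they must), but the mechanism is the same.

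Your \emph{unboundedness} argument, however, has a genuine gap, and it takes a fundamentally harder route than the paper needs. You propose to produce, directly, a sequence $S^\ell\to\infty$ in $\mathcal M_2$ with $\lambda_1(S^\ell)>\tfrac14$, by opening the node of a type-$(1,2)$ surface $S_\infty$ that has no eigenvalue in $(0,\tfrac14]$. Two problems. First, the existence of such an $S_\infty$ is not available: your example $\mathbb H/\Gamma_0(11)$ would require Selberg's conjecture (Conjecture \ref{sel}), which is open --- Huxley's result covers only principal congruence groups of small level and the best unconditional bound is $\tfrac{975}{4096}<\tfrac14$ --- and the ``deformation/genericity'' alternative is unsubstantiated. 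Second, even granted such an $S_\infty$, the claim $\lambda_1(S^\ell)>\tfrac14$ (strict!) for $\ell$ small is exactly the hard threshold problem that the whole BBD/Brooks--Makover circle of ideas fails to resolve: lower semicontinuity only yields $\lambda_1(S^\ell)\ge\tfrac14-\epsilon$, which is why Theorem \ref{aprox} carries an $\epsilon$ and the paper is explicit that it does \emph{not} prove Conjecture \ref{conj}. Your proposal, if it worked, would prove Conjecture \ref{conj} in genus two by direct construction --- considerably more than the theorem asserts.

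The paper sidesteps all of this with a short indirect argument that reuses the disconnection machinery: if $\mathcal B_2(\tfrac14)$ were bounded, by Mumford compactness it would lie in some thick part $\mathcal I_\epsilon$, so the thin part $\mathcal M_2\setminus\mathcal I_\epsilon$ would be contained in $\{\lambda_1\le\tfrac14\}$; but the thin part is path-connected (Lemma \ref{pconn}) and, by the pinching construction of Proposition \ref{diffisotopy}, contains surfaces realizing both nodal-set isotopy types --- contradicting the local constancy from Claim \ref{isotopy}. No control of $\lambda_1$ near $\tfrac14$, and no existence input, is needed. You should look for this kind of ``bounded $\Rightarrow$ thin part is entirely on one side'' trick rather than trying to push a test-function or transfer argument across the threshold.
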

\textbf{Sketch of the proof of Theorem \ref{e2}:}\\*
We first prove that ${\mathcal{B}_2}(\frac{1}{4})$ disconnects ${\mathcal{M}_2}$. We argue by contradiction and assume that $\mathcal{M}_2 \setminus {\mathcal{B}_2}(\frac{1}{4})$ is connected. Now for any $S \in \mathcal{M}_2 \setminus {\mathcal{B}_2}(\frac{1}{4})$, ${\lambda_1}(S)$ is small and hence has multiplicity exactly one by \cite{O}. We shall see that, in fact, the {\it nodal set} of the ${\lambda_1}(S)$-eigenfunction (see \S3) consists of simple closed curves. With the help of this property we shall deduce that the nodal set of the first eigenfunction is constant, up to isotopy, on $\mathcal{M}_2 \setminus {\mathcal{B}_2}(\frac{1}{4})$. Finally, using an argument involving geodesic pinching we shall show that there exist surfaces $S_1$ and $S_2$ in ${\mathcal{M}_2} \setminus {\mathcal{B}_2}(\frac{1}{4})$ such the nodal sets of the ${\lambda_1}({S_1})$-eigenfunction is not isotopic to the nodal set of the ${\lambda_1}({S_2})$-eigenfunction. This provides the desired contradiction. The rest of the theorem i.e. ${\mathcal{B}_2}(\frac{1}{4})$ is unbounded is deduced from a description of the components of $\mathcal{M}_2 \setminus {\mathcal{B}_2}(\frac{1}{4})$.

For finite area hyperbolic surfaces with Euler characteristic two the ideas in the above proof carries over to provide the following.
\begin{thm}\label{E2}
For any $(g, n)$ with $2g-2+n=2$ (i.e. $(g, n)= (2, 0), (1, 2)$ or $(0, 4)$) the set ${\mathcal{C}_{g, n}}(\frac{1}{4}) = \{ S \in {\mathcal{M}_{g, n}}: {\lambda_1}(S) \ge \frac{1}{4}\}$disconnects ${\mathcal{M}_{g, n}}$. Moreover for $(g, n) = (2, 0)$ and $(1, 2)$ it is unbounded.
\end{thm}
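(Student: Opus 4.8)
\textit{Proof proposal.} The plan is to run the argument of \tref{e2} verbatim for each of the three types $(g,n)\in\{(2,0),(1,2),(0,4)\}$, checking that the ingredients survive, and then to supply the case-specific pinching examples. The structural input is uniform: since $2g-3+n=1$, \pref{otal} says every small eigenvalue of a surface of type $(g,n)$ has multiplicity one. Hence on the \emph{open} set $\mathcal{M}_{g,n}\setminus\mathcal{C}_{g,n}(\tfrac14)=\{S:\lambda_1(S)<\tfrac14\}$ the number $\lambda_1$ is a simple eigenvalue that depends continuously on $S$ (and real-analytically along analytic paths, by \tref{b-c}), so after fixing a normalization and a sign the $\lambda_1$-eigenfunction $\phi_S$ varies continuously in $C^\infty$. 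By Courant's theorem $\lambda_1$ has exactly two nodal domains, which are therefore connected subsurfaces, and (as established in \S3 for Euler characteristic two) the nodal set $Z(\phi_S)$ is a disjoint union of simple closed curves with no singular points. Thus $Z(\phi_S)$ is a multicurve that cuts the surface into exactly two connected pieces.

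\textbf{Disconnection.} Suppose, for contradiction, that $\{S:\lambda_1(S)<\tfrac14\}$ is connected. Since $Z(\phi_S)$ is a nonsingular system of simple closed curves and $\phi_S$ varies in $C^1$, the isotopy class of the multicurve $Z(\phi_S)$ is locally constant, hence constant on this set --- this is precisely the constancy argument in the proof of \tref{e2}. It therefore suffices to produce two surfaces $S_1,S_2$ with $\lambda_1(S_i)<\tfrac14$ whose $\lambda_1$-nodal sets are \emph{non-isotopic} multicurves. These will come from pinching geodesics: along a path pinching a multicurve $\Gamma$ whose complement has exactly two components, one has $\lambda_1\to 0$ while $\lambda_1$ remains the unique small eigenvalue, the normalized $\phi_S$ converges to a step function which is $\pm$ a constant on the two limiting pieces (opposite signs, forced by orthogonality to the constants), and consequently $Z(\phi_S)$ is driven into the thin collars and becomes isotopic to $\Gamma$.

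\textbf{The two families, by type.} For $(2,0)$: take $S_1$ by pinching a separating simple closed geodesic $\sigma$ (complement $=\Sigma_{1,1}\sqcup\Sigma_{1,1}$), so $Z(\phi_{S_1})$ is isotopic to the single curve $\sigma$; take $S_2$ by pinching the three curves $\gamma_1\cup\gamma_2\cup\gamma_3$ of a \emph{non-separating} pants decomposition (complement $=\Sigma_{0,3}\sqcup\Sigma_{0,3}$), so $Z(\phi_{S_2})$ is isotopic to a three-component multicurve; a one-curve multicurve is never isotopic to a three-curve one. For $(1,2)$: take $\sigma$ the separating curve enclosing both punctures ($\Sigma_{1,2}=\Sigma_{1,1}\cup_\sigma\Sigma_{0,3}$), against the two curves $\mu_1\cup\mu_2$ of a non-separating pants decomposition (two pants, each carrying one puncture), giving nodal sets with one and with two components. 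For $(0,4)$: take the curve separating the punctures $\{1,2\}$ from $\{3,4\}$ against the curve separating $\{1,3\}$ from $\{2,4\}$; both give one-curve nodal sets but they partition the punctures differently, hence are not isotopic on $\Sigma_{0,4}$. In every case $S_1,S_2$ lie in different components of $\{\lambda_1<\tfrac14\}$, contradicting the constancy; so $\mathcal{C}_{g,n}(\tfrac14)$ disconnects $\mathcal{M}_{g,n}$.

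\textbf{Unboundedness and the main obstacle.} For $(2,0)$ and $(1,2)$, the complement of a compact set in $\mathcal{M}_{g,n}$ is connected (a punctured neighbourhood of the connected boundary divisor in the Deligne--Mumford completion is connected). If $\mathcal{C}_{g,n}(\tfrac14)$ were bounded it would lie in a compact $K$; choosing $S_1,S_2$ far enough along their pinching families to lie outside $K$, they would be joined inside $\{\lambda_1<\tfrac14\}$, again contradicting constancy of the nodal isotopy type. Hence $\mathcal{C}_{g,n}(\tfrac14)$ is unbounded; for $(0,4)$ the moduli space is one-dimensional with a disconnected end, so this last step --- and only this step --- breaks, which is why unboundedness is claimed only in the first two cases. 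The hard part of the whole argument is the analytic behaviour of $\phi_S$ under pinching: that the normalized eigenfunction converges to the locally constant step function on the limiting pieces and that its nodal set is thereby confined to the collars, one component per pinched geodesic. This requires the min--max and collar estimates for small eigenfunctions (the same ones underlying the description of nodal sets in \S3), applied uniformly along the degenerating family, together with the bookkeeping that each chosen configuration has complement with exactly two components so that $\lambda_1$ stays simple and small throughout.
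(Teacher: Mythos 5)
Your proposal follows the paper's route faithfully: multiplicity one for small $\lambda_1$ (you invoke \pref{otal} directly, noting $2g-3+n=1$; the paper cites \cite{O-R}, but both give the same conclusion), local constancy of the nodal-set isotopy type (the paper's Claim~\ref{isotopy1}), pinching via \pref{diffisotopy} to realize two non-isotopic configurations as nodal sets, and thin-part connectedness to upgrade disconnection to unboundedness for $(2,0)$ and $(1,2)$. Your list of pinching configurations matches the paper's Claim~\ref{ne2}, your explanation of why unboundedness fails for $(0,4)$ matches the paper's exclusion of $(0,4)$ in Lemma~\ref{pconn}, and your appeal to the Deligne--Mumford picture is a valid substitute for the explicit Fenchel--Nielsen path construction there.

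The one genuine gap: the parenthetical ``(as established in \S3 for Euler characteristic two) the nodal set $Z(\phi_S)$ is a disjoint union of simple closed curves'' is \emph{not} established in \S3 for the punctured types. \S3 treats only closed genus two, where $\chi(\mathcal{Z}(\phi_S))=0$ falls directly out of $\chi(S)=-2$ and the negativity of the nodal-domain Euler characteristics. For $(1,2)$ and $(0,4)$, \lref{O} gives a priori only that the closure $\overline{\mathcal{Z}(\phi_S)}$ in $\overline{S}$ is a finite graph, which could have vertices at the punctures; in that case $\mathcal{Z}(\phi_S)$ would contain non-compact arcs running into cusps, not a multicurve, and the isotopy-constancy machinery would not apply. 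The paper closes this by running Euler--Poincar\'e on the cell decomposition of $\overline{S}$: if $k$ is the number of punctures \emph{off} $\overline{\mathcal{Z}(\phi_S)}$, then $\chi(\overline{S})-k=\chi(S\setminus\overline{\mathcal{Z}(\phi_S)})+\chi(\overline{\mathcal{Z}(\phi_S)})$, and since the first summand on the right is $\leq -2$ and the second is $\leq 0$, one is forced to have $k=n$ and $\chi(\overline{\mathcal{Z}(\phi_S)})=0$, so no puncture sits on the nodal set and it is indeed a compact multicurve. You need to insert this step for the two punctured types; with it, the rest of your argument runs parallel to the paper's.
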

Our next result is on the existence of branches of eigenvalues in $\mathcal{T}_g$, for any $g \ge 3$, that start as $\lambda_1$ and eventually becomes larger than $\frac{1}{4}$.
\begin{thm}\label{br}
There are branches of eigenvalues in $\mathcal{T}_g$ that start as $\lambda_1$ and take values strictly bigger than $\frac{1}{4}$.
\end{thm}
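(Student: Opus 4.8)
Unravelling the statement, it suffices to produce a real analytic path $\gamma\colon[0,1]\to\mathcal{T}_g$ together with one of the analytic eigenvalue branches $\mu_t=\lambda^t_k$ furnished by \tref{b-c} such that $\mu_0=\lambda_1(\gamma(0))$ and $\mu_{t_0}>\tfrac14$ for some $t_0$. Two features of such a branch will be used throughout. (i) It is nowhere zero: if $\mu_{t_1}=0$ then its eigenfunction is harmonic, hence constant, so $\mu$ and the identically zero branch (whose eigenfunctions are the constants) share an eigenpair at $t_1$; as $0$ is a simple eigenvalue, analytic continuation of the eigenprojection forces $\mu\equiv0$, contradicting $\mu_0=\lambda_1(\gamma(0))>0$. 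Hence $\mu_t>0$ for all $t$. (ii) In the orthonormal form of \tref{b-c} the branches carry an analytically varying complete orthonormal eigenbasis $(\phi^t_k)_k$; so at any fixed parameter, branches of pairwise distinct indices have mutually orthogonal eigenfunctions. Recall also that $\mathcal{T}_g\cong\R^{6g-6}$ real-analytically in Fenchel--Nielsen coordinates, so any two marked surfaces are joined by a real analytic path.

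\textbf{The case $g=2$.} By \tref{e2}, $\mathcal{B}_2(\tfrac14)\ne\varnothing$; fix $X$ with $\lambda_1(X)>\tfrac14$, so every positive eigenvalue of $X$ exceeds $\tfrac14$. Join an arbitrary $S^0$ to $X$ by a real analytic path and take a branch $\mu$ with $\mu_0=\lambda_1(S^0)$. By (i), $\mu_1>0$, whence $\mu_1\ge\lambda_1(X)>\tfrac14$: this branch starts as $\lambda_1$ and takes a value bigger than $\tfrac14$.

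\textbf{The case $g\ge3$.} Here no surface is known to satisfy $\lambda_1>\tfrac14$, so we exploit multiplicity. Let $S^0$ be a Colbois--Colin de Verdi\`ere surface whose first eigenvalue is small and of multiplicity $m_0=\lfloor\tfrac{1+\sqrt{8g+1}}{2}\rfloor\ge3$; by \tref{b-c} there are $m_0$ branches $\mu^{(1)},\dots,\mu^{(m_0)}$ of pairwise distinct indices with $\mu^{(i)}_0=\lambda_1(S^0)>0$. Suppose there is a real analytic path from $S^0$ to a surface $S^1$ carrying strictly fewer than $m_0$ small eigenvalues, say $j\le m_0-1$ of them. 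At the endpoint, those branches $\mu^{(i)}$ with $\mu^{(i)}_1\le\tfrac14$ have, by (i), positive eigenvalues, hence eigenfunctions orthogonal to the constants; by (ii) these eigenfunctions are mutually orthogonal, so they span a subspace of the $j$-dimensional span of the small eigenfunctions of $S^1$. Therefore at most $j<m_0$ of the indices $i$ satisfy $\mu^{(i)}_1\le\tfrac14$, so some $\mu=\mu^{(i)}$ has $\mu_1>\tfrac14$. Since $\mu_0=\lambda_1(S^0)$, this is a branch starting as $\lambda_1$ that exceeds $\tfrac14$, and the theorem follows.

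\textbf{Where the work lies.} The outstanding point is to produce, for every $g\ge3$, a closed genus--$g$ surface $S^1$ with fewer than $m_0$ small eigenvalues (at most two when $3\le g\le5$) --- far weaker than $\lambda_1>\tfrac14$, but the crux of the argument. A workable construction is to let $S^1$ degenerate along $\gamma$ to a disjoint union of hyperbolic pieces, arranging as many of them as possible to be surfaces of Euler characteristic $-2$ of type $(2,0)$ or $(1,2)$ with no small eigenvalue, supplied by \tref{e2}--\tref{E2}: in the degeneration limit the spectrum is the union of the pieces' spectra, and the small eigenvalues of $S^1$ are then accounted for by the separating necks of the decomposition together with the (few) small eigenvalues of the remaining pieces. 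For $g=3$ and $g=4$ a decomposition into two, respectively three, type--$(1,2)$ pieces already brings the count $j$ of small eigenvalues of $S^1$ below $m_0$; for larger $g$ one must use pieces of higher complexity with controlled small spectra, and the delicate part is keeping $j<m_0$ in every genus while retaining enough multiplicity at $S^0$ --- together with checking that the branch singled out above is genuinely the one that climbs past $\tfrac14$ rather than one that remains pinned among the small eigenvalues. This bookkeeping draws on \tref{b-c}, \pref{otal}, the Colbois--Colin de Verdi\`ere multiplicity theorem, and \tref{e2}--\tref{E2}.
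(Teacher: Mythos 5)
Your $g=2$ case is correct and essentially the same argument the paper would give there (and your observation (i) that an analytic branch starting at a positive eigenvalue can never reach $0$ is a real and necessary point, used implicitly in the paper too). The issue is the case $g\ge3$, which you rightly flag as the crux and then leave open.

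The gap is not a small one. Your counting scheme needs, for each $g\ge3$, a surface $S^1\in\mathcal{M}_g$ with strictly fewer than $m_0=\lfloor\tfrac{1+\sqrt{8g+1}}{2}\rfloor\sim\sqrt{2g}$ small eigenvalues, and your proposed construction of $S^1$ by degenerating to pieces with no small eigenvalues cannot produce this in general: cutting a genus-$g$ surface into pieces of bounded complexity requires on the order of $g$ necks, and by \cite{C-C} each additional component of the limit contributes one eigenvalue tending to $0$, so the number $j$ of small eigenvalues of $S^1$ grows linearly in $g$ while $m_0$ grows only like $\sqrt{g}$. For large $g$ you therefore get $j>m_0$ rather than $j<m_0$, and the orthogonality count fails. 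Keeping $j<m_0$ would force you to use pieces of unbounded complexity with no small eigenvalues, which is essentially the open problem (\ref{conj}) that you are trying to sidestep. There is also a subtlety, even for $g=3$, about whether a surface assembled near the boundary from type-$(1,2)$ pieces with $\lambda_1=\tfrac14$ has only one small eigenvalue or two, because of the continuous spectrum at $\tfrac14$ of the limit pieces; this needs care but is not the main obstruction.

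The paper's proof avoids multiplicity counting entirely and goes through coverings. One embeds $\mathcal{T}_2\hookrightarrow\mathcal{T}_g$ by an analytic map $\Pi$ whose image consists of surfaces with a cyclic symmetry of order $g-1$, so that $\Pi(S)$ is a $(g-1)$-sheeted cover of $S$. Every eigenvalue of $S$ is then an eigenvalue of $\Pi(S)$, so any branch of eigenvalues along an analytic path $\eta$ in $\mathcal{T}_2$ pushes forward to a branch along $\Pi\circ\eta$ in $\mathcal{T}_g$. Using $\mathcal{B}_2(\tfrac14)\ne\varnothing$ one finds a genus-two branch that starts as $\lambda_1$ and exceeds $\tfrac14$; the remaining point is to guarantee that the lifted branch also \emph{starts} as $\lambda_1$ in $\mathcal{T}_g$, i.e.\ to find $S\in\mathcal{T}_2$ with $\lambda_1(\Pi(S))=\lambda_1(S)$. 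This the paper does by a degeneration argument: pinch a suitable triple of curves on $S$ so that both $S_n$ and $\Pi(S_n)$ degenerate to two-component limits, whence $\lambda_1(S_n)\to0$ while $\lambda_2(S_n)$ and $\lambda_2(\Pi(S_n))$ stay bounded away from $0$; for large $n$ one gets $\lambda_1(S_n)<\lambda_2(\Pi(S_n))$, hence $\lambda_1(S_n)=\lambda_1(\Pi(S_n))$. This covering route requires nothing about multiplicity and works uniformly in $g$, which is exactly what your degeneration bookkeeping cannot supply.
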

Recall that $\mathcal{T}_2$ can be embedded in $\mathcal{T}_g$ as an analytic subset containing surfaces with certain symmetries (see \S\ref{brnch}). The branches in theorem \ref{br} will be obtained by composing the branches in $\mathcal{T}_2$ by the above embedding $\Pi: {\mathcal{T}_2} \to {\mathcal{T}_g}$. We shall use a {\it geodesic pinching} argument to prove that among these branches there are ones that start as $\lambda_1$.

Our last result is on the multiplicity of $\lambda_1$ of genus zero hyperbolic surfaces, punctured spheres.
\begin{thm}\label{puncturedsphere}
Let $S$ be a hyperbolic surface of genus $0$. If ${\lambda_1}(S) \leq \frac{1}{4}$ is an eigenvalue then the multiplicity of ${\lambda_1}(S)$ is at most three.
\end{thm}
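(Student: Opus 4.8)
The plan is to bound the multiplicity of $\lambda_1(S)$ by analyzing the nodal set of a $\lambda_1$-eigenfunction on a genus zero surface $S$ of type $(0,n)$ and counting its nodal domains. The general principle of Otal \cite{O} (Proposition \ref{otal}) is that, for a \emph{small} eigenvalue, the nodal set of an eigenfunction is \emph{incompressible}: no component bounds a disk and no component cuts off a one-punctured disk, because on such a piece the eigenfunction would be a small eigenfunction of a hyperbolic disk or one-cusped domain with Dirichlet boundary conditions, forcing $\lambda_1 > 1/4$. (This uses the sharp comparison of the bottom of the Dirichlet spectrum of an embedded hyperbolic subsurface with the full hyperbolic plane, which I will take as available since it underlies Proposition \ref{otal}.) So each nodal component on $S$ is a simple closed curve or arc that is essential in $\overline S$ minus the $n$ punctures. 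Courant's nodal domain theorem says a $k$-th eigenfunction has at most $k+1$ nodal domains; applied with the ordering where $\lambda_1$ has multiplicity $m$, a suitably chosen eigenfunction in the $\lambda_1$-eigenspace has at most $m+1$ nodal domains, and one shows (as in the two-sided estimates of Cheng, and used in \cite{O}) that the number of nodal domains is in fact at least $m+1$ for an appropriately chosen eigenfunction; hence it suffices to show that \emph{any} system of disjoint essential simple closed curves and arcs on a sphere with $n$ punctures separating it into incompressible pieces produces at most $4$ complementary regions, which would give $m \le 3$.

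The key combinatorial step is therefore: if $\Gamma$ is a disjoint union of simple closed curves on the $n$-punctured sphere $\Sigma_{0,n}$, none of which bounds a disk or a once-punctured disk (counting punctures as the $n$ marked points), then $\Sigma_{0,n}\setminus\Gamma$ has at most $3$ components that are "innermost" in the sense needed, but more precisely that the total number of nodal domains is at most $4$. I would argue this via an Euler-characteristic / counting argument: each complementary region is a planar surface with at least two boundary circles coming from $\Gamma$ or at least one boundary circle and at least two punctures (otherwise it is a disk or once-punctured disk, contradicting incompressibility), and summing these lower bounds against the fixed totals $2|\Gamma|$ (each curve borders two regions) and $n$ (total punctures) together with the planarity ($\chi(\Sigma_{0,n})=2-n$) forces the number of regions to be small. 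Since the eigenfunction is bounded by $1/4$ and we only get to use that $\lambda_1 \le 1/4$, the borderline case $\lambda_1 = 1/4$ must be handled: here the relevant Dirichlet comparison is no longer strict, so a region that is a once-punctured disk is a priori allowed; this is exactly why the bound is $3$ rather than $2$ (the sharp bound for the generic small case on a sphere), and I must check the arithmetic of the counting argument permits exactly one "extra" region of that degenerate type.

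The main obstacle I anticipate is making the nodal-set structure rigorous at the threshold $\lambda_1 = 1/4$ and in the presence of cusps: near a puncture the eigenfunction has a known asymptotic expansion (constant term plus exponentially decaying terms for eigenvalues below $1/4$, but at exactly $1/4$ the behavior is more delicate), and I need the nodal set to be a well-behaved $1$-complex meeting the cusp neighborhoods in controlled arcs so that the topological counting applies. A secondary technical point is to justify that within the $m$-dimensional $\lambda_1$-eigenspace one can select an eigenfunction whose number of nodal domains is at least $m+1$; this is standard (take a generic linear combination vanishing at $m-1$ prescribed points, or use the argument in \cite{O}), but it must be stated carefully so that the incompressibility input applies to \emph{that} eigenfunction. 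Once these analytic inputs are in place, the genus-zero restriction makes the topology purely planar and the counting is elementary, yielding the bound $3$, which improves Proposition \ref{otal}'s bound of $2g-3+n = n-3$ whenever $n \ge 7$.
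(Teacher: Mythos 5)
Your proposal contains two genuine gaps, the first of which is fatal to the intended structure of the argument.

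\textbf{The nodal-domain count cannot detect multiplicity.} Your plan rests on the assertion that ``a suitably chosen eigenfunction in the $\lambda_1$-eigenspace has at least $m+1$ nodal domains,'' where $m$ is the multiplicity. But Courant's theorem applies to \emph{every} eigenfunction of the first nonzero eigenvalue, independently of multiplicity: if $\phi$ had three or more nodal domains, truncating $\phi$ to two of them and choosing a linear combination orthogonal to the constants would produce a nonzero test function with Rayleigh quotient $\le \lambda_1$ that vanishes on an open set, a contradiction. So every $\lambda_1$-eigenfunction has at most two nodal domains, and there is no eigenfunction in the eigenspace with $m+1$ of them once $m \ge 2$. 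The classical multiplicity bounds (Cheng, Besson, Nadirashvili, Hoffmann--Ostenhof, Otal) do not count nodal \emph{domains} of a generic element; they count the order of vanishing at a chosen point: high multiplicity lets you kill many Taylor (or Fourier) coefficients, which by Cheng's local structure theorem forces a nodal singular point where many arcs meet, and that many arcs is what conflicts with the topology. Your proposal has conflated these two mechanisms.

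\textbf{The combinatorial claim is also false as stated.} Even if one had an eigenfunction with many nodal domains, the claim that any collection of disjoint essential simple closed curves on $\Sigma_{0,n}$ (none bounding a disk or once-punctured disk) produces at most four complementary regions is not true. The Euler-characteristic count you sketch gives, for $k$ such curves all separating, $2k + n \ge 3(k+1)$, i.e.\ $k \le n-3$, so the number of regions can be as large as $n-2$. For $n$ large this is unbounded, so the planar topology alone does not cap the count at four.

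\textbf{What the paper actually does is quite different.} It first uses Courant's theorem together with the Jordan curve theorem to show that $\overline{\mathcal{Z}(\phi)}$ is a \emph{single} simple closed curve in $\overline{S}$; in particular, at most two arcs of the nodal set emanate from any puncture $p$. The multiplicity bound then comes from Fourier analysis in a cusp, not from nodal-domain counting: since genus-zero surfaces have no cuspidal eigenfunctions, every $\lambda_1$-eigenfunction is a combination of residues of Eisenstein series, whose expansion at $p$ has no $y^s$ term and reads
\[
\phi(x,y)= \phi_0\, y^{1-s} + \sum_{j\ge 1}\sqrt{\tfrac{2jy}{\pi}}\,K_{s-\frac{1}{2}}(jy)\bigl(\phi^e_j\cos(jx)+\phi^o_j\sin(jx)\bigr).
\]
One considers the linear map $\pi:\mathcal{E}_1\to\mathbb{R}^3$, $\phi\mapsto(\phi_0,\phi^e_1,\phi^o_1)$. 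If $\dim\mathcal{E}_1>3$ then some nonzero $\psi\in\ker\pi$ has $\psi_0=\psi^e_1=\psi^o_1=0$, and Judge's theorem then forces at least four nodal arcs of $\psi$ emanating from $p$, contradicting the ``at most two arcs'' conclusion above. The constant $3$ in the theorem is literally the dimension of the target $\mathbb{R}^3$. To repair your proof you would need to replace the nodal-domain count with a vanishing-order (here, Fourier-coefficient) argument of exactly this kind.
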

\textbf{Sketch of proof:} ~ Let $S$ be a hyperbolic surface of genus $0$ with $n$ punctures. Let $\overline{S}$ denote the closed surface obtained by filling in the punctures of $S$. Assume that ${\lambda_1}(S) \leq \frac{1}{4}$ is an eigenvalue. Let $\phi$ be a ${\lambda_1}(S)$-eigenfunction with nodal set $\mathcal{Z}(\phi)$ (\S2) which is a finite graph by \cite{O} (see lemma \ref{O}).

Using \textbf{Jordan curve theorem} and \textbf{Courant's nodal domain theorem} we shall deduce the simple description of $\overline{\mathcal{Z}(\phi)}$ as a simple closed curve in $\overline{S}$. In particular, if one of the punctures $p$ of $S$ lies on $\overline{\mathcal{Z}(\phi)}$ then the number of arcs in $\overline{\mathcal{Z}(\phi)}$ emanating from $p$ is at most two.

Let $p$ be one of the punctures of $S$. It is a standard fact that in any cusp around $p$ any ${\lambda_1}(S)$-eigenfunction $\phi$ has a Fourier development of the form:
\begin{equation}\label{expresss}
\phi(x, y)= {\phi_0}{y^{1-s}} + {\sum_{j \geq 1}}\sqrt{\frac{2jy}{\pi}}{K_{s-\frac{1}{2}}}(jy)({\phi^e_j}\cos(j.x) + {\phi^o_j}\sin(j.x))
\end{equation}
where ${\lambda_1}(S)= s(1-s)$ with $s \in (\frac{1}{2}, 1]$ and $K$ is the modified Bessel function of exponential decay (see \S2). Denote the vector space generated by ${\lambda_1}(S)$-eigenfunctions by $\mathcal{E}_1$ and consider the map $\pi: {\mathcal{E}_1} \to {\mathbb{R}^3}$ given by $\pi(\phi) = ({\phi_0}, {\phi^e_1}, {\phi^o_1})$. This is a linear map and so if $\dim{\mathcal{E}_1} >3$ then $\ker{\pi}$ is non-empty. Let $\psi \in \ker{\pi}$ i.e. ${\psi_0} = {\psi^e_1}= {\psi^o_1}= 0$. Then by the result \cite{Ju} of Judge, the number of arcs in $\overline{\mathcal{Z}(\psi)}$ emanating from $p$ is at least four, a contradiction to the above description of $\overline{\mathcal{Z}(\phi)}$ at $p$.
\section{Preliminaries}
In this section we recall some definitions and results that will be necessary in later sections. Let $S$ be a finite area hyperbolic surface. Then $S$ is homeomorphic to a closed surface with finitely many points removed. Each of these point, called punctures, has special neighborhoods in $S$ called {\it cusp}s.
\subsection{Cusps} Denote by $\iota$ the parabolic isometry $\iota: z \rightarrow z + 2\pi$. For a choice of $t > 0$, a cusp $\mathcal{P}^t$ is the half-infinite cylinder $\{z= x+iy : y > \frac{2\pi}{t} \}/<\iota>$. The boundary curve $\{y = \frac{2\pi}{t} \}$ is a {\it horocycle} of length $t$. The hyperbolic metric on $\mathcal{P}^t$ has the form:
\begin{equation}
d{s^2} = \frac{d{x^2} + d{y^2}}{y^2}.
\end{equation}
Any function $f \in {L^2}(\mathcal{P}^t)$ has a Fourier development in the $x$ variable of the form
\begin{equation}
 f(z) = \sum_{n \in {{\mathbb Z}^*}} {f_n}(y) \cos(nx + {\theta_n}).
\end{equation}
If $f$ satisfy the equation $\Delta f = s(1-s)f$ then the above expression can be simplified as
\begin{equation}\label{expres}
 f(z) = {f_0}(y) + {\sum_{j \geq 1}}{f_j}\sqrt{\frac{2jy}{\pi}}{K_{s-\frac{1}{2}}}(jy)\cos(j.x - {\theta_j})$$$$ ={f_0}(y) + {\sum_{j \geq 1}}\sqrt{\frac{2jy}{\pi}}{K_{s-\frac{1}{2}}}(jy)({f^e_j}\cos(j.x) + {f^o_j}\sin(j.x))
\end{equation}
where $K_s$ is the modified {\it Bessel function} (see \cite{Ju}) and
\begin{equation}
{f_0}(y) = {f_{0, 1}}{y^s} + {f_{0, 2}}{y^{1-s}} ~~ \textrm{if} ~~ s \neq \frac{1}{2} ~~ \textrm{and}$$$$ ~~ {f_0}(y) = {f_{0, 1}}{y^\frac{1}{2}} + {f_{0, 2}}{y^\frac{1}{2}}\log{y} ~~ \textrm{if} ~~ s =\frac{1}{2}.
\end{equation}
The function $f$ is called {\it cuspidal} if ${f_0}(y) \equiv 0$.
\subsection{Nodal sets}
For any function $f: S \rightarrow \mathbb{R}$, the set $\{ x \in S: f(x)= 0 \}$ is called the {\it nodal set} ${\mathcal Z}(f)$ of $f$. Each component of $S \setminus {\mathcal Z}(f)$ is called a {\it nodal domain} of $f$. In a neighborhood of a regular point $p \in {\mathcal Z}(f)$ (${\nabla_p} f \neq 0$) the implicit function theorem implies that ${\mathcal Z}(f)$ is a smooth curve. In a neighborhood of a critical point $p \in {\mathcal Z}(f)$ (${\nabla_p} f = 0$), it is not so simple to describe ${\mathcal Z}(f)$. When $f$ is an eigenfunction of the Laplacian we have the following description due to S. Y. Cheng \cite{Che}:
\begin{thm}\label{cheng}
Let $S$ be a surface with a $C^\infty$ metric. Then, for any solution of the equation $(\Delta + h)\phi =0$, $h \in {C^\infty}(S)$, one has:\\*
$(i)$ Critical points on the nodal set $\mathcal{Z}(\phi)$ are isolated.\\*
$(ii)$ Any critical point in $\mathcal{Z}(\phi)$ has a neighborhood $N$ in $S$ which is diffeomorphic to the disc $\{ z \in \mathbb{C}: |z| < 1\}$ by a $C^1$-diffeomorphism that sends $\mathcal{Z}(\phi) \cap N$ to an equiangular system of rays.
\end{thm}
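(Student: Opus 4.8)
The plan is to reduce the statement to an analysis of the germ of $\phi$ at a zero. Fix $p\in\mathcal{Z}(\phi)$ and assume $\phi\not\equiv0$. By elliptic regularity $\phi\in C^\infty(S)$, and by the strong unique continuation property for $(\Delta+h)\phi=0$ with $h$ bounded (Aronszajn), $\phi$ vanishes at $p$ to some finite order $n$. Pass to geodesic normal coordinates $(x^1,x^2)$ at $p$, in which $g^{ij}=\delta^{ij}+O(|x|^2)$, so that $\Delta=\Delta_0+a^{ij}\partial_i\partial_j+b^i\partial_i$ with $a^{ij}=O(|x|^2)$ and $b^i=O(|x|)$, where $\Delta_0$ is the flat Laplacian. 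Inserting the Taylor expansion $\phi=P_n+P_{n+1}+\cdots$ into the equation and isolating the terms of lowest degree, namely degree $n-2$, gives $\Delta_0P_n=0$: the leading homogeneous part $P_n$ is harmonic for the flat Laplacian. In two real variables a homogeneous harmonic polynomial of degree $n$ is $P_n=\operatorname{Re}(cz^n)$ with $c\ne0$, where $z=x^1+ix^2$; in polar coordinates $P_n=|c|\,r^n\cos(n\theta+\alpha)$, whose nodal set is a system of $2n$ rays issuing from the origin at equal angles $\pi/n$. If $p$ is moreover a critical point of $\phi$, then $\nabla\phi(p)=0$ forces $n\ge2$, so this ray system has at least four rays.

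For assertion $(i)$, use the exact identity $|\nabla P_n|=n|c|\,r^{n-1}$, valid for $P_n=\operatorname{Re}(cz^n)$, together with the fact that $\phi-P_n$ is smooth and vanishes to order at least $n+1$, so $|\nabla(\phi-P_n)|\le Cr^{n}$ near $p$. Then $|\nabla\phi|\ge r^{n-1}\bigl(n|c|-Cr\bigr)>0$ on a punctured neighborhood of $p$, so that neighborhood contains no critical point of $\phi$ other than $p$; in particular the critical points lying on $\mathcal{Z}(\phi)$ are isolated.

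For assertion $(ii)$, work in polar coordinates on a small disk $N$ about $p$ and write $r^{-n}\phi(r,\theta)=u(\theta)+v(r,\theta)$, where $u(\theta)=|c|\cos(n\theta+\alpha)$ is the angular profile of $P_n$; since $\phi-P_n=r^nv$ is smooth and vanishes to order at least $n+1$, the remainder $v$ extends to a $C^1$ function up to $r=0$ with $|v|+|\partial_\theta v|=O(r)$. The $2n$ zeros $\theta_1<\cdots<\theta_{2n}$ of $u$ on the circle are simple, so the implicit function theorem applied to $u(\theta)+v(r,\theta)=0$ near each $\theta_k$ yields, for $r$ small, exactly $2n$ nodal arcs $\gamma_k=\{\theta=\theta_k(r)\}$ with $\theta_k(r)=\theta_k+O(r)$ and $\theta_k(\cdot)$ of class $C^1$; in Cartesian coordinates each $\gamma_k$ is a $C^1$ arc issuing from $p$ with tangent direction $\theta_k$, and the directions $\theta_1,\dots,\theta_{2n}$ are equally spaced. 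The required $C^1$ diffeomorphism $N\to\{w\in\mathbb{C}:|w|<1\}$ is then built by keeping the radial coordinate and, on the sector bounded by $\gamma_k$ and $\gamma_{k+1}$, sending the angle $\theta\in[\theta_k(r),\theta_{k+1}(r)]$ to the affine reparametrization onto $[\theta_k,\theta_{k+1}]$; this maps $\mathcal{Z}(\phi)\cap N$ onto the equiangular ray system, and the $C^1$-regularity of the $\gamma_k$ together with the agreement of their tangents at $p$ make the assembled map a $C^1$ diffeomorphism. (This last construction is essentially the argument of Cheng \cite{Che}.)

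I expect two points to need real care. The first is establishing the finite vanishing order and the harmonicity of the leading term: this rests on strong unique continuation and on a careful accounting of which Taylor degrees of $\phi$, of the metric coefficients, and of $h$ feed into each degree of the equation. The second, and more delicate, is the $C^1$ analysis underlying $(ii)$: one must verify that $v$ really is $C^1$ up to the center with the stated decay, so that the implicit function theorem produces arcs that are $C^1$ at $p$ with genuine tangent directions, and then that the sectorwise straightening assembles into a map whose differential is continuous and non-degenerate across each $\gamma_k$ and at $p$ itself. This regularity-at-the-center issue is the real content of the theorem.
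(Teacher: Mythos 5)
The paper does not prove this theorem; it is cited verbatim from Cheng~\cite{Che}, so there is no in-paper argument to compare against. Your proposal is essentially a faithful reconstruction of Cheng's proof, and the main chain of ideas is sound: strong unique continuation gives finite vanishing order $n$; in geodesic normal coordinates the lowest-degree ($n-2$) part of $(\Delta+h)\phi=0$ indeed reduces to $\Delta_0 P_n=0$ because the error terms $a^{ij}\partial_i\partial_j$, $b^i\partial_i$, $h$ contribute only at degree $\ge n$; in two variables this forces $P_n=\operatorname{Re}(cz^n)$ with the $2n$-ray nodal picture; the gradient lower bound $|\nabla\phi|\ge r^{n-1}(n|c|-Cr)$ isolates critical points; and the polar implicit-function-theorem argument produces $2n$ $C^1$ nodal arcs with equally spaced tangents. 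Where Cheng invokes Bers' local behaviour theorem to obtain the harmonic leading term plus a controlled remainder, you derive the same conclusion directly from the Taylor expansion, which is fine in the $C^\infty$ setting at hand; Bers' formulation is needed only when one wants this with merely H\"older coefficients.

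You correctly flag the one place that needs genuine care, namely the $C^1$-assembly of the sectorwise straightening at $p$ itself. Two details deserve explicit verification there. First, when you write $r^{-n}\phi=u(\theta)+v(r,\theta)$, the function $v$ is $C^1$ on $\{r>0\}$ but extending it $C^1$ to $r=0$ (so that the IFT yields arcs with honest one-sided tangents at $p$, rather than merely $\theta_k(r)\to\theta_k$) requires Taylor's theorem with remainder: one needs $\phi-P_n=\sum_{|\alpha|=n+1}x^\alpha R_\alpha(x)$ with $R_\alpha$ continuous, which gives $v=O(r)$, $\partial_\theta v=O(r)$ and $\partial_r v$ bounded and continuous up to $r=0$. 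Second, the piecewise-affine angular reparametrization $\theta\mapsto\Theta(r,\theta)$ is only Lipschitz across the arcs $\gamma_k$ for fixed $r$; what saves the $C^1$-diffeomorphism claim at the origin is that both the jumps in the breakpoints $\theta_k(r)-\theta_k$ and the deviations of the affine slopes from $1$ are $O(r)$, so in Cartesian coordinates the map is $\mathrm{id}+O(|x|^2)$ with a derivative tending to the identity. Cheng packages precisely this step into a separate lemma on $C^1$-equivalence of zero sets; your sketch is consistent with it, but these two computations are where the actual content lies and should be written out if this were to be a complete proof.
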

\begin{rem}\label{rcheng}
In particular, if $p \in \mathcal{Z}(\phi)$ is a critical point of $\phi$ then the degree of the graph $\mathcal{Z}(\phi)$ at $p$ is at least $4$. Hence if a component of $\mathcal{Z}(\phi)$ is a simple closed loop then it is automatically smooth.
\end{rem}
When $S$ is closed theorem \ref{cheng} implies that ${\mathcal Z}(\phi)$ is a finite graph. When $S$ is non-compact with finite area it implies {\it local} finiteness of ${\mathcal Z}(\phi)$ but not {\it global}. In this particular case we have the following lemma due to Jean-Pierre Otal \cite[Lemma 6]{O} (the second part is \cite[Lemma 1]{O})
\begin{lem}\label{O}
Let $S$ be a hyperbolic surface with finite area and let $\phi : S \rightarrow \mathbb{R}$ be a $\lambda$-eigenfunction with $\lambda \leq \frac{1}{4}$. Then the closure of $\mathcal{Z}(\phi)$ in $\overline{S}$ is a finite graph. Moreover, each nodal domain of $\phi$ has negative Euler characteristic.
\end{lem}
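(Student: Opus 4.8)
The plan is to follow the strategy of J.-P.\ Otal \cite{O}: first show that $\overline{\mathcal{Z}(\phi)}$ is a finite graph by a local analysis of $\phi$ near the punctures of $S$, and then rule out nodal domains of non-negative Euler characteristic by a spectral comparison with simple model surfaces.

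\emph{Finiteness of the graph.} By \tref{cheng}, $\mathcal{Z}(\phi)$ is already a finite graph in any compact subset of $S$, so everything takes place near a puncture $p$. Fix a cusp $\mathcal{P}^t$ at $p$ and expand $\phi$ there as in \eqref{expres}, with $\lambda=s(1-s)$ and $s\in[\tfrac12,1]$. Since $\phi\in L^2(S)$ while the growing indicial solution $y^{s}$ (respectively $y^{1/2}$ and $y^{1/2}\log y$ when $s=\tfrac12$) is not square--integrable for the cusp metric, its coefficient vanishes, so $\phi_0(y)=\phi_{0,2}\,y^{1-s}$ with $1-s\in[0,\tfrac12)$, while every remaining summand $\sqrt{2jy/\pi}\,K_{s-1/2}(jy)(\phi^e_j\cos jx+\phi^o_j\sin jx)$ equals $e^{-jy}(\phi^e_j\cos jx+\phi^o_j\sin jx)$ up to lower order as $y\to\infty$. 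If $\phi_{0,2}\neq0$ the zero mode dominates and $\phi$ has no zero for $y$ large, so $\mathcal{Z}(\phi)$ avoids a neighbourhood of $p$. If $\phi_{0,2}=0$, let $k\geq1$ be least with $(\phi^e_k,\phi^o_k)\neq(0,0)$; such $k$ exists, since otherwise $\phi$ would vanish on an open set and hence identically by unique continuation. The tail of the series then decays like $e^{-(k+1)y}$ whereas the $k$-th term is comparable to $e^{-ky}$, so $e^{ky}\phi(x,y)$, together with its $x$--derivative, converges uniformly in $x$ as $y\to\infty$ to $\phi^e_k\cos kx+\phi^o_k\sin kx$ and its derivative. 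As this limit has $2k$ transverse zeros on the circle, the implicit function theorem shows that for $y$ large $\mathcal{Z}(\phi)$ is exactly a union of $2k$ disjoint smooth arcs, each running into $p$ and carrying no critical point of $\phi$. In both cases $\overline{\mathcal{Z}(\phi)}$ is a finite graph near $p$; combined with the compact part this shows $\overline{\mathcal{Z}(\phi)}$ is a finite graph in $\overline S$, and in particular $\phi$ has only finitely many nodal domains, each of finite topological type.

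\emph{Negative Euler characteristic.} Suppose, for contradiction, that some nodal domain $\Omega$ has $\chi(\Omega)\geq0$. Since every hyperbolic surface has negative Euler characteristic we may assume $\Omega\neq S$, so $\partial\Omega\neq\emptyset$; then $\Omega$, being an orientable subsurface of $S$ with boundary on $\mathcal{Z}(\phi)$ and possibly containing finitely many cusps of $S$, must be a disc, an annulus, or a once--punctured disc, so the image $H$ of $\pi_1(\Omega)$ in $\pi_1(S)=\Gamma$ is trivial or infinite cyclic, generated in the non-trivial case by a hyperbolic or parabolic element. Let $M=\mathbb H/H$ be the corresponding covering surface; thus $M$ is $\mathbb H$, or $\mathbb H/\la h\ra$ with $h$ hyperbolic or parabolic. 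The inclusion $\Omega\hookrightarrow S$ lifts to $M$ (its image on $\pi_1$ equals $H$), and the lift is injective since it covers the injective map $\Omega\hookrightarrow S$; hence $\Omega$ embeds isometrically into $M$. Now $\phi|_\Omega$ is a non-zero $L^2$ solution of $\Delta u+\lambda u=0$ vanishing on $\partial\Omega$ (indeed a first Dirichlet eigenfunction of $\Omega$, being of one sign), so $\lambda$ lies in the spectrum of the Dirichlet Laplacian of $\Omega$; extending $\phi|_\Omega$ by zero realises $\lambda$ as the Rayleigh quotient of a square--integrable function on $M$ of finite Dirichlet energy. But each of $\mathbb H$ and $\mathbb H/\la h\ra$ ($h$ hyperbolic or parabolic) has Laplace spectrum exactly $[\tfrac14,\infty)$ with no $L^2$ eigenvalue, so $\lambda\geq\tfrac14$; with $\lambda\leq\tfrac14$ this forces $\lambda=\tfrac14$, whence the zero--extension of $\phi|_\Omega$ attains the infimum of the Rayleigh quotient on $M$ and is an $L^2$ eigenfunction of $M$ with eigenvalue $\tfrac14$ --- a contradiction. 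Therefore every nodal domain has negative Euler characteristic.

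\emph{Main obstacle.} The delicate part is the cusp analysis in the first step: one must use the precise indicial exponents to see that the $L^2$ hypothesis kills the growing zero mode, and one must control the remainder of the Fourier expansion in $C^1$ uniformly as $y\to\infty$, so that the implicit function theorem applies and yields only finitely many nodal arcs at each puncture. The second step is comparatively soft, its only non-elementary input being the classical fact that $\mathbb H$ and the quotients $\mathbb H/\la h\ra$ by a single hyperbolic or parabolic isometry have purely continuous Laplace spectrum equal to $[\tfrac14,\infty)$.
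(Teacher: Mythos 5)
Your reconstruction follows the same two-step strategy as Otal's proof in \cite{O}, which the paper cites without reproducing (his Lemma~6 for the finite-graph claim, his Lemma~1 for the Euler characteristic bound): the Fourier expansion in each cusp combined with the $L^2$ hypothesis shows that the nodal set consists of finitely many smooth arcs near every puncture, and the comparison of Rayleigh quotients against $\lambda_0(\mathbb H)=\lambda_0(\mathbb H/\langle h\rangle)=\tfrac14$ excludes discs, annuli, and once-punctured discs as nodal domains. The argument is correct; the only step you elide is the routine verification that the zero extension of $\phi|_\Omega$ lies in the form domain $H^1(M)$, which uses that $\partial\Omega$ is a piecewise $C^1$ set of measure zero together with the decay of $\phi$ in any cusp contained in $\Omega$ to justify both the extension and the integration by parts giving the Rayleigh quotient $\lambda$.
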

In particular, $\overline{{\mathcal Z}(\phi)}$ is a union (not necessarily disjoint) of finitely many {\it cycles} in $\overline{S}$ that may contain some of the punctures of $S$. Next we recall Courant's nodal domain theorem
\begin{thm}\label{courant}
Let $S$ be a closed hyperbolic surface. Then the number of nodal domains of a ${\lambda_i}(S)$-eigenfunction can be at most $i+1$.
\end{thm}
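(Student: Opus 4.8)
The plan is to run the classical Courant argument: combine the variational (min--max) description of the eigenvalues with the unique continuation property of eigenfunctions on the real-analytic surface $S$. Fix an $L^2(S)$-orthonormal basis of eigenfunctions $\phi_0,\phi_1,\dots$ with $\Delta\phi_k+\lambda_k\phi_k=0$, where $0=\lambda_0<\lambda_1\le\lambda_2\le\cdots$, and write $R(u)=\bigl(\int_S|\nabla u|^2\,dA\bigr)\big/\bigl(\int_S u^2\,dA\bigr)$ for the Rayleigh quotient of a nonzero $u\in H^1(S)$. I will use two standard facts. First, the min--max formula
\[
\lambda_i=\min_{\substack{V\subseteq H^1(S)\\ \dim V=i+1}}\ \max_{u\in V\setminus\{0\}} R(u).
\]
Second, if $u\in H^1(S)\setminus\{0\}$ satisfies $u\perp\phi_0,\dots,\phi_{i-1}$ in $L^2(S)$ and $R(u)=\lambda_i$, then $u$ is a $\lambda_i$-eigenfunction; this follows by expanding $u=\sum_{k\ge i}a_k\phi_k$ and noting $R(u)=\bigl(\sum_k\lambda_k a_k^2\bigr)\big/\bigl(\sum_k a_k^2\bigr)\ge\lambda_i$, with equality only if $a_k=0$ whenever $\lambda_k>\lambda_i$. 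Now argue by contradiction: suppose some $\lambda_i$-eigenfunction $\phi$ has at least $i+2$ nodal domains, and single out $i+2$ of them, $D_1,\dots,D_{i+2}$.

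For $j=1,\dots,i+1$ set $u_j:=\phi\cdot\chi_{D_j}$, the restriction of $\phi$ to $D_j$ extended by $0$. Since $S$ is closed, $\phi\in C^\infty(S)$, and by \tref{cheng} the nodal set $\mathcal{Z}(\phi)$ is a finite graph; as $\partial D_j\subseteq\mathcal{Z}(\phi)$, the function $u_j$ is Lipschitz on $S$, hence lies in $H^1(S)$ (in fact in $H^1_0(D_j)$), and Green's identity on $D_j$ together with $\phi|_{\partial D_j}=0$ and $\Delta\phi=-\lambda_i\phi$ gives $\int_S|\nabla u_j|^2\,dA=\lambda_i\int_S u_j^2\,dA$. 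The $u_1,\dots,u_{i+1}$ have pairwise disjoint supports, so they are linearly independent, and for any $u=\sum_{j=1}^{i+1}c_j u_j$ the cross terms vanish, whence $\int_S|\nabla u|^2\,dA=\lambda_i\int_S u^2\,dA$; thus $R\equiv\lambda_i$ on the $(i+1)$-dimensional space $V:=\operatorname{span}(u_1,\dots,u_{i+1})$. The $i$ linear constraints $u\perp\phi_0,\dots,\phi_{i-1}$ then admit a nonzero solution $u_\ast\in V$, and by the second fact $u_\ast$ is a $\lambda_i$-eigenfunction. But $u_\ast$ vanishes identically on the nonempty open set $D_{i+2}$, and an eigenfunction on the real-analytic surface $S$ is real-analytic, hence satisfies unique continuation, so $u_\ast\equiv0$ on the connected surface $S$ --- contradicting $u_\ast\ne0$. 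Therefore every $\lambda_i$-eigenfunction has at most $i+1$ nodal domains.

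The only step requiring any care is the regularity input behind the $u_j$: that $\phi|_{D_j}$ extended by zero really belongs to $H^1(S)$ (equivalently to $H^1_0(D_j)$) and that Green's identity is valid on $D_j$. Both follow from the structure of $\mathcal{Z}(\phi)$ furnished by \tref{cheng} (or \lref{O}), namely that $\partial D_j$ is a finite union of smooth arcs meeting at finitely many vertices and $\phi$ is continuous and vanishes there; with this the boundary integrals drop out and the disjoint-support computation goes through verbatim. Once the Rayleigh quotient is constant on the $(i+1)$-dimensional subspace $V$, the rest is elementary linear algebra plus unique continuation, so I expect no further obstacle.
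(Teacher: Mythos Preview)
Your argument is the classical Courant proof and is correct as written; the only delicate points (membership of the truncated functions $u_j$ in $H^1(S)$, validity of Green's identity on nodal domains, and unique continuation) you have identified and handled appropriately, invoking \tref{cheng} for the structure of $\partial D_j$ and real-analyticity of the hyperbolic metric for unique continuation.

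As for comparison: the paper does not supply its own proof of this statement. \tref{courant} is stated as a background result (``Next we recall Courant's nodal domain theorem'') and the reader is referred to \cite{Cha} or \cite{Che} for a proof. What you have written is essentially the argument one finds in those references, so there is no divergence of approach to discuss.
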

The proof (see \cite{Cha} or \cite{Che}) of this theorem works also for finite area hyperbolic surfaces if ${\lambda_i} < \frac{1}{4}$. In particular, for a hyperbolic surface $S$ with finite area if ${\lambda_1}(S) < \frac{1}{4}$ then the number of nodal domains of a ${\lambda_1}(S)$-eigenfunction is at most two. Since any ${\lambda_1}$-eigenfunction $\phi$ has mean zero, ${\mathcal Z}(\phi)$ must disconnect $S$. Hence any $\lambda_1$-eigenfunction has exactly two nodal domains.
\section{Genus two: Proof of Theorem \ref{e2}}
We begin by proving that ${\mathcal{B}_2}(\frac{1}{4})$ disconnects ${\mathcal{M}_2}$. We argue by contradiction and assume that $\mathcal{M}_2 \setminus {\mathcal{B}_2}(\frac{1}{4})$ is connected. Now, for any $S \in \mathcal{M}_2 \setminus {\mathcal{B}_2}(\frac{1}{4})$: ${\lambda_1}(S) \le \frac{1}{4}$ and so ${\lambda_1}(S)$ is simple by \cite{O}. Hence to a surface $S \in \mathcal{M}_2 \setminus {\mathcal{B}_2}(\frac{1}{4})$ one can assign its first non-constant eigenfunction $\phi_S$ without any ambiguity. We assume that $\phi_S$ is normalized i.e.
\begin{equation}
{\int_S} {\phi_S^2} d{\mu_S} = 1.
\end{equation}
Let $\mathcal{Z}({\phi_S})$ denote the {\it nodal set} of $\phi_S$. Since $\phi_S$ is the first eigenfunction, by Courant's nodal domain theorem, $S \setminus {\mathcal{Z}({\phi_S})}$ has exactly two components. Denote by ${S^{+}}({\phi_S})$ (resp. ${S^{-}}({\phi_S})$) the component of $S \setminus {\mathcal{Z}({\phi_S})}$ where $\phi_S$ is positive (resp. negative). By Euler-Poicar\'{e} formula applied to the cell decomposition of $S$ consisting of nodal domains of $\phi_S$ as the two skeleton and the nodal set $\mathcal{Z}({\phi_S})$ as the one skeleton we have the following equality:
\begin{equation}\label{euler-poincare}
\chi (S) = \chi({S^{+}}({\phi_S})) + \chi({S^{-}}({\phi_S})) + \chi({\mathcal Z}({\phi_S})).
\end{equation}
Since $\chi(S)= -2$ and both $\chi({S^{+}}({\phi_S}))$ and $\chi({S^{-}}({\phi_S}))$ are negative by lemma \ref{O}, we conclude from \eqref{euler-poincare} that $\chi(\mathcal{Z}({\phi_S}))= 0$. This means that $\mathcal{Z}({\phi_S})$ consists of simple closed curve(s) that divide $S$ into exactly two components. Moreover, since no nodal domain of ${\phi_S}$ is a disc or an annulus by lemma \ref{O}, each curve in $\mathcal{Z}({\phi_S})$ is {\it essential} (homotopically non-trivial in $S$) and no two curves in $\mathcal{Z}({\phi_S})$ are homotopic. In particular,
\begin{claim}\label{description}
For any $S \in \mathcal{M}_2 \setminus {\mathcal{B}_2}(\frac{1}{4})$, the nodal set $\mathcal{Z}({\phi_S})$ of $\phi_S$ consists either of tree smooth simple closed curves that divide $S$ into two pair of pants (the first picture below) or of a unique smooth simple closed curve that divides $S$ into two tori with one hole (the second picture below).
\end{claim}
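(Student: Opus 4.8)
The plan is to prove the dichotomy by combining an Euler characteristic count with a count of the boundary circles of the two nodal domains $S^{+}=S^{+}(\phi_S)$ and $S^{-}=S^{-}(\phi_S)$, the bridge between the two being the sign change of $\phi_S$ across its nodal set. First I would upgrade the conclusion of \tref{cheng}, that $\mathcal{Z}(\phi_S)$ is a finite graph, to the statement that it is a disjoint union of smooth simple closed curves. By the discussion above each component of $\mathcal{Z}(\phi_S)$ is a simple closed curve, so every point of $\mathcal{Z}(\phi_S)$ has local degree $2$ in the graph $\mathcal{Z}(\phi_S)$; on the other hand a critical point of $\phi_S$ lying on $\mathcal{Z}(\phi_S)$ has local degree at least $4$ by Remark~\ref{rcheng}. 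Hence $\mathcal{Z}(\phi_S)$ contains no critical point of $\phi_S$: it consists entirely of regular points, so it is a disjoint union of smooth simple closed curves $c_1,\dots,c_k$ and, by the implicit function theorem, $\phi_S$ has opposite signs on the two sides of each $c_i$.

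Next I would read off the topology of the nodal domains. We have already noted $\chi(\mathcal{Z}(\phi_S))=0$, and \lref{O} gives $\chi(S^{+})<0$ and $\chi(S^{-})<0$, while \eqref{euler-poincare} gives $\chi(S^{+})+\chi(S^{-})=-2$; hence $\chi(S^{+})=\chi(S^{-})=-1$. Now a connected compact orientable surface with nonempty boundary and Euler characteristic $-1$ has genus $g\ge 0$ and $b\ge 1$ boundary circles with $2g+b=3$, hence is a pair of pants ($g=0$, $b=3$) or a one-holed torus ($g=1$, $b=1$); in particular $b\in\{1,3\}$, and no such surface has exactly two boundary circles.

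To connect the two facts I would argue that $\mathcal{Z}(\phi_S)$ glues $\overline{S^{+}}$ to $\overline{S^{-}}$ circle by circle. Since $S$ is orientable, each $c_i$ is two-sided, and the sign change shows that one side of $c_i$ is contained in $S^{+}$ and the other in $S^{-}$ along the whole of $c_i$. Therefore no boundary circle of $\overline{S^{+}}$ is identified in $S$ with another boundary circle of $\overline{S^{+}}$: the surface $S$ is recovered by gluing $\overline{S^{+}}$ to $\overline{S^{-}}$ along a homeomorphism $\partial\overline{S^{+}}\to\partial\overline{S^{-}}$ that matches both boundaries bijectively with $\{c_1,\dots,c_k\}$. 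In particular $k=|\partial\overline{S^{+}}|=|\partial\overline{S^{-}}|\in\{1,3\}$ by the previous paragraph. If $k=3$, then both $\overline{S^{+}}$ and $\overline{S^{-}}$ are pairs of pants and $c_1,c_2,c_3$ cut $S$ into two pairs of pants; if $k=1$, then both are one-holed tori and $c_1$ cuts $S$ into two one-holed tori; the mixed cases are impossible because a pair of pants and a one-holed torus have different numbers of boundary circles. Since the $c_i$ were already shown to be essential and pairwise non-homotopic, this is exactly the asserted conclusion.

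The argument is essentially bookkeeping once \lref{O}, Courant's theorem and \tref{cheng} are in hand; the point I expect to require the most care is the circle-by-circle gluing. Without it one could imagine $\overline{S^{+}}$ being a pair of pants two of whose boundary circles are identified in $S$ (so that its image is an abstract one-holed torus), glued to a one-holed torus $\overline{S^{-}}$ — a configuration with $k=2$ compatible with all the remaining constraints. It is precisely the sign change of $\phi_S$ across the (entirely regular) nodal set that excludes this, and that is the step I would write out in full.
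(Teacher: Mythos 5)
Your proof is correct and follows the same route as the paper: the Euler--Poincar\'e identity together with Otal's lemma~\ref{O} and Courant's theorem give $\chi(\mathcal{Z}(\phi_S))=0$ and $\chi(S^{+})=\chi(S^{-})=-1$, and the dichotomy is then read off from these counts. The one point you spell out that the paper compresses into an ``In particular'' is the circle-by-circle gluing. As you note, the constraints recorded just before the claim---that $\mathcal{Z}(\phi_S)$ is a union of disjoint, essential, pairwise non-homotopic simple closed curves cutting $S$ into two nodal domains of negative Euler characteristic---do not by themselves rule out a two-curve configuration, for instance a separating curve $\gamma$ together with a non-separating curve $\delta$ lying inside one of the one-holed tori bounded by $\gamma$, which cuts $S$ into a pair of pants and a one-holed torus. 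What excludes this, as you correctly argue, is that $\mathcal{Z}(\phi_S)$ consists only of regular points of $\phi_S$ (since $\chi(\mathcal{Z}(\phi_S))=0$ is incompatible with a vertex of degree $\ge 4$, Remark~\ref{rcheng}), so $\phi_S$ changes sign across every component $c_i$; hence each $c_i$ contributes exactly one boundary circle to $\overline{S^{+}}$ and one to $\overline{S^{-}}$, forcing $\lvert\partial\overline{S^{+}}\rvert=\lvert\partial\overline{S^{-}}\rvert=k\in\{1,3\}$. Making this step explicit is the right way to turn the paper's ``In particular'' into a genuine deduction.
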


\centerline{\includegraphics[height=3in]{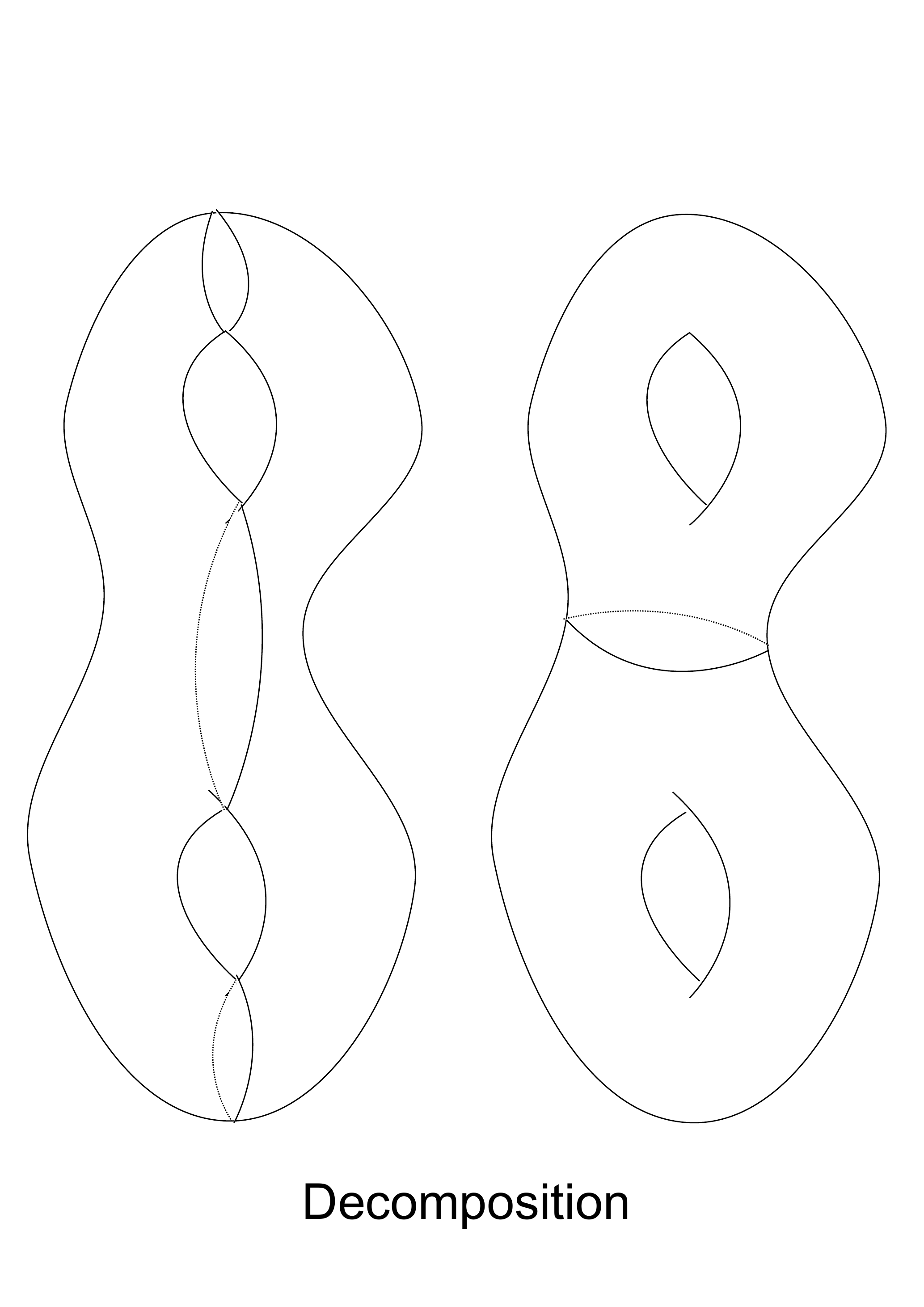}}
Now we have the following:
\begin{claim}\label{isotopy}
Let $S \in {\mathcal{M}_2}$ such that ${\lambda_1}(S)$ is simple and the nodal set $\mathcal{Z}({\phi_S})$ of the ${\lambda_1}(S)$-eigenfunction $\phi_S$ is also simple. Then $S$ has a neighborhood $\mathcal{N}(S)$ in ${\mathcal{M}_2}$ such that for any ${S^{'}} \in \mathcal{N}(S)$ the nodal set $\mathcal{Z}({\phi_{S^{'}}})$ is isotopic to $\mathcal{Z}({\phi_S})$.
\end{claim}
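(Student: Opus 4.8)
The plan is to show that the normalized first eigenfunction varies continuously with $S$ in the $C^1$ topology and that $0$ is a regular value of $\phi_S$, and then to invoke the stability of regular level sets. First I would fix a smooth closed surface $\Sigma$ of genus two, choose a small contractible neighborhood of $S$ in $\mathcal{M}_2$, lift it to an open set $V\subset\mathcal{T}_2$, and realize each $S'\in V$ as $(\Sigma,g_{S'})$ with $S'\mapsto g_{S'}$ depending smoothly (in the $C^\infty$ topology on metrics) on $S'$, e.g.\ through Fenchel--Nielsen coordinates. Since the isotopy class in $\Sigma$ of an embedded $1$-submanifold is unchanged by an isotopy of the metric, it suffices to prove that the sets $\mathcal{Z}(\phi_{S'})\subset\Sigma$ are isotopic to $\mathcal{Z}(\phi_S)$ for all $S'$ near $S$ in $V$; the statement on $\mathcal{M}_2$ then follows.

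Because $\lambda_1(S)$ is simple and $S'\mapsto\lambda_1(S')$ is continuous (analytic along analytic paths by \tref{b-c}), $\lambda_1$ stays simple near $S$, so $\phi_{S'}$ is unambiguously defined once we normalize $\|\phi_{S'}\|_{L^2}=1$ and fix its sign by $\langle\phi_{S'},\phi_S\rangle_{L^2}>0$ (legitimate for $S'$ close to $S$). I claim $\phi_{S'}\to\phi_S$ in $C^1(\Sigma)$ as $S'\to S$. Given $S_i\to S$, the functions $\phi_{S_i}$ solve $\Delta_{g_{S_i}}\phi_{S_i}+\lambda_1(S_i)\phi_{S_i}=0$ with $\|\phi_{S_i}\|_{L^2}=1$; as $\lambda_1(S_i)$ is bounded and $g_{S_i}\to g_S$ in $C^\infty$, uniform elliptic estimates bound $\phi_{S_i}$ in $C^2(\Sigma)$ (indeed in every $C^k$), so a subsequence converges in $C^1$ to some $\psi$ with $\Delta_{g_S}\psi+\lambda_1(S)\psi=0$ and $\|\psi\|_{L^2}=1$; simplicity of $\lambda_1(S)$ forces $\psi=\pm\phi_S$, and the sign convention gives $\psi=\phi_S$. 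Since every subsequence of $(\phi_{S_i})$ has a further subsequence converging to $\phi_S$, the whole family converges, proving the claim.

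Next I would check that $0$ is a regular value of $\phi_S$: by hypothesis $\mathcal{Z}(\phi_S)$ is a finite disjoint union of smooth simple closed curves, hence a graph all of whose vertices have degree two, so by \tref{cheng} (cf.\ \lref{O} and Remark~\ref{rcheng}) it contains no critical point of $\phi_S$, i.e.\ $\nabla\phi_S\neq0$ on $\mathcal{Z}(\phi_S)$. Thus $\mathcal{Z}(\phi_S)=\phi_S^{-1}(0)$ is a compact embedded $1$-submanifold of $\Sigma$ admitting a tubular neighborhood $U\cong\mathcal{Z}(\phi_S)\times(-1,1)$ on which $\phi_S$ is a submersion in the second coordinate, while $|\phi_S|\geq\delta>0$ on the compact set $\Sigma\setminus U$. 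For $S'$ close enough to $S$ we then have $|\phi_{S'}|\geq\delta/2$ on $\Sigma\setminus U$ (by $C^0$ convergence) and $\phi_{S'}$ is still a submersion in the second $U$-coordinate (by $C^1$ convergence); hence $\mathcal{Z}(\phi_{S'})\subset U$, and by the implicit function theorem applied fiberwise $\mathcal{Z}(\phi_{S'})\cap U$ is the graph $\{(c,h(c)):c\in\mathcal{Z}(\phi_S)\}$ of a $C^1$-small function $h$, which is isotopic in $U$ to the zero section $\mathcal{Z}(\phi_S)$ through the graphs of $t\,h$, $t\in[0,1]$. Therefore $\mathcal{Z}(\phi_{S'})$ is isotopic in $\Sigma$ to $\mathcal{Z}(\phi_S)$, as required.

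I expect the main obstacle to be the $C^1$-continuity of $S'\mapsto\phi_{S'}$: it rests essentially on the simplicity hypothesis, which both makes $\phi_{S'}$ unique up to sign and prevents eigenvalue collisions, together with elliptic estimates that must be uniform along a converging family of metrics; and one has to be careful that the local trivialization $S'\mapsto(\Sigma,g_{S'})$ renders the comparison of nodal sets well defined, which is precisely why the argument is carried out after lifting a small neighborhood to $\mathcal{T}_2$.
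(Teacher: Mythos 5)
Your proof is correct, but it takes a genuinely different route from the paper's. The paper's argument only controls the \emph{sign} of $\phi_{S'}$ outside a tubular neighborhood $\mathcal{T}_S$ of $\mathcal{Z}(\phi_S)$ (a $C^0$ statement, cited from \cite[Theorem 3.36]{M}), concludes $\mathcal{Z}(\phi_{S'})\subset\mathcal{T}_S$, and then invokes the topological classification of Claim~\ref{description} \emph{for the perturbed surface $S'$} to force each annular component of $\mathcal{T}_S$ to contain exactly one curve of $\mathcal{Z}(\phi_{S'})$, which must be isotopic to the core. You instead upgrade to $C^1$ convergence via uniform elliptic estimates, observe that the hypothesis plus Remark~\ref{rcheng} makes $0$ a regular value of $\phi_S$, and get the isotopy from the implicit function theorem applied fiberwise in $\mathcal{T}_S$. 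The trade-off: your route needs the stronger $C^1$ continuity of the eigenfunction in the metric (which you correctly derive from simplicity plus Schauder estimates), but in exchange it requires no information about the \emph{topology} of $\mathcal{Z}(\phi_{S'})$ for the perturbed surface — in particular it works transparently even when $\lambda_1(S')>\frac14$, where Claim~\ref{description} is not available, a point the paper's proof leaves implicit. Both arguments are fine; yours is somewhat more self-contained and robust, the paper's is lighter on analytic input because it can lean on the genus-two classification of nodal sets.
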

\begin{proof}
First observe that ${\lambda_1}(S)$ being simple we have a neighborhood $\mathcal{N'}(S)$ in ${\mathcal{M}_2}$ such that for any $S' \in \mathcal{N}(S)$ ${\lambda_1}(S')$ is simple. Hence $\phi_S'$ is well defined too.

Now $\phi_S$ is the ${\lambda_1}(S)$-eigenfunction, so $S \setminus \mathcal{Z}({\phi_S})$ has exactly two connected components $S^+$ and $S^-$ such that $\phi_S$ has positive sign on $S^+$. So necessarily $\phi_S$ has negative sign on $S^-$. Now consider a tubular neighborhood $\mathcal{T}_S$ of $\mathcal{Z}({\phi_S})$. By \cite[Theorem 3.36]{M}(see also \cite{H}, \cite{Ji}) we have a neighborhood $\mathcal{N}(S) \subset \mathcal{N'}(S)$ of $S$ such that for any $S' \in \mathcal{N}(S)$, $\phi_{S'}$ has positive sign on ${S^+} \setminus {\mathcal{T}_S}$ and negative sign on ${S^-} \setminus {\mathcal{T}_S}$. In particular, $\mathcal{Z}(\phi_{S'}) \subset {\mathcal{T}_S}$. Hence by the description of $\mathcal{Z}(\phi_{S'})$ as in claim \ref{description} the proof follows.
\end{proof}
Therefore, there exists $S \in {\mathcal{M}_2} \setminus {\mathcal{B}_2}(\frac{1}{4})$ such that $\mathcal{Z}({\phi_S})$ consists of only one curve if and only if for all ${S^{'}} \in {\mathcal{M}_2} \setminus {\mathcal{B}_2}(\frac{1}{4})$, $\mathcal{Z}({\phi_{S^{'}}})$ consists of only one curve. This is a contradiction to proposition \ref{diffisotopy}.
\begin{defn}
The {\it systole $s(S)$} of a surface $S$ is the minimum of the lengths of closed geodesics on $S$. The {\it injectivity radius} of $S$ at a point $p$ is the maximum of the radius of the geodesic discs with center $p$ that embed in $S$. For any $\epsilon>0$ the set of points of $S$ with injectivity radius at least $\epsilon$ is denoted by $S^{[\epsilon, \infty)}$. Each point in the complement $S^{(0, \epsilon)}= S \setminus S^{[\epsilon, \infty)}$ has injectivity radius at most $\epsilon$. $S^{[\epsilon, \infty)}$ and $S^{(0, \epsilon)}$ are respectively called {\it $\epsilon$-thick part} and {\it $\epsilon$-thin part} of $S$.
\end{defn}
\begin{prop}\label{diffisotopy}
Let $S$ be a finite area hyperbolic surface of type $(g, n)$. Let $G = (\gamma_i)_{i=1}^k$ be a collection of smooth, mutually non-intersection simple closed curves on $S$ that separates $S$ in exactly two components. Assume that $G$ is minimal in the sense that no proper subset of $G$ can separate $S$. Then given any $\epsilon, \delta >0$ there exists a finite area hyperbolic surface $S_G$ of type $(g, n)$ with $s(S_G) < \epsilon$ such that $\lambda_1(S_G)< \delta$ is simple and the nodal set of the $\lambda_1(S_G)$-eigenfunction is isotopic to $G$.
\end{prop}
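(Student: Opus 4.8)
The plan is to build $S_G$ by \emph{pinching} the curves of $G$. We may assume the $\gamma_i$ are essential, non-peripheral, and --- by minimality --- pairwise non-isotopic (otherwise $G$ is isotopic to the nodal set of no $\lambda_1$-eigenfunction, by \lref{O}, since no nodal domain is a disc, an annulus or a once-punctured disc, and there is nothing to prove); then the two complementary pieces $S\setminus G=\Sigma^+\sqcup\Sigma^-$ have negative Euler characteristic and, again by minimality, each $\gamma_i$ is adjacent to both. Complete $G$ to a pants decomposition $\mathcal P=\{\gamma_1,\dots,\gamma_k,\gamma_{k+1},\dots,\gamma_{3g-3+n}\}$ and, for $L>0$ small, let $S_L$ be the hyperbolic surface of type $(g,n)$ with Fenchel--Nielsen coordinates $\ell(\gamma_i)=L$ for $i\le k$, $\ell(\gamma_j)=1$ for $j>k$, and all twists zero; then $s(S_L)\le L<\epsilon$ once $L<\epsilon$. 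Let $C_i\subset S_L$ be the collar of $\gamma_i$ bounded by the two equidistant curves of length $1$: for $L$ small these are disjoint embedded annuli whose modulus tends to $\infty$, and the complement $\Sigma^\pm_L:=S_L\setminus\bigcup_i C_i$ has uniformly bounded geometry and converges smoothly, as $L\to0$, to a finite-area hyperbolic surface $\Sigma^\pm_\infty$ (with the pinched curves replaced by cusps).

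Next I would verify $\lambda_1(S_L)<\delta$ and its simplicity. For smallness, the usual test function --- equal to $\mp1/\vol(\Sigma^\mp_L)$ on $\Sigma^\mp_L$, interpolated linearly in the collar coordinate across each $C_i$, and recentred to have mean zero --- has Dirichlet energy tending to $0$ while its $L^2$-norm stays bounded below, so $\lambda_1(S_L)\to0$ by the variational characterisation. For simplicity I would use that the pinched curves cut $S$ into exactly two pieces: by the standard analysis of eigenvalue degeneration under pinching (see \cite{B3}), $\liminf_{L\to0}\lambda_2(S_L)>0$ (indeed it equals $\min(\lambda_1(\Sigma^+_\infty),\lambda_1(\Sigma^-_\infty))$). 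Hence for $L$ small $\lambda_1(S_L)<\delta<\lambda_2(S_L)$, so $\lambda_1(S_L)$ is simple.

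The core of the argument is to pin down the nodal set. Fix such an $L$ and set $\phi=\phi_{S_L}$, $\|\phi\|_2=1$. Writing the test function above in the eigenbasis of $S_L$ as $c_1\phi+r$ with $r\perp\{1,\phi\}$, the vanishing of its Dirichlet energy together with the spectral gap above $\lambda_1$ force $\|r\|_2\to0$ and $c_1\to$ a nonzero limit; since moreover $\Delta r=c_1\lambda_1\phi\to0$ uniformly, interior elliptic estimates on the thick pieces $\Sigma^\pm_L$ (which have uniformly bounded geometry and converge smoothly to $\Sigma^\pm_\infty$) upgrade this to $C^1$ control and show that, after fixing the sign of $\phi$, it converges uniformly to a positive constant on one of the two pieces and to a negative constant on the other; denote these pieces $\Sigma^+_L$ and $\Sigma^-_L$ respectively, and note that in the fixed cusps of $S_L$ the Fourier expansion \eqref{expres} shows the $y^{1-s}$-term dominates, so $\phi$ keeps a fixed sign there too. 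Consequently there is $c_0>0$ with $\phi\ge c_0$ on $\Sigma^+_L$ and $\phi\le-c_0$ on $\Sigma^-_L$ for $L$ small; in particular $\mathcal Z(\phi)\subset\bigsqcup_i C_i$.

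It then remains to show $\mathcal Z(\phi)\cap C_i$ is a core-parallel simple closed curve for each $i$. By \tref{courant} and mean-zero, $S_L\setminus\mathcal Z(\phi)$ has exactly two (connected) nodal domains $N^\pm=\{\pm\phi>0\}$, with $\Sigma^\pm_L\subset N^\pm$. A component of $N^\pm\cap C_i$ whose closure misses $\partial C_i$ would be clopen in $N^\pm$, forcing $N^\pm\subset C_i$ --- impossible; and only one component of $N^+\cap C_i$ can meet the boundary circle of $C_i$ on the $\Sigma^+$ side (where $\phi>0$). Hence $N^\pm\cap C_i$ is connected and $\mathcal Z(\phi)\cap C_i$ cuts the annulus $C_i$ into exactly two pieces $P,Q$, one meeting each boundary circle. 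Doubling $C_i$ across $\partial C_i$ to a torus and computing $\chi$ of the decomposition into a regular neighbourhood of $\mathcal Z(\phi)\cap C_i$ and its complement gives $\chi(\mathcal Z(\phi)\cap C_i)=-\chi(P)-\chi(Q)$; as $P$ and $Q$ are connected planar surfaces each containing an essential curve of $C_i$, we have $\chi(P),\chi(Q)\le0$, while by Cheng's local structure (\tref{cheng}) a nodal set has only vertices of even degree $\ge4$, so $\chi(\mathcal Z(\phi)\cap C_i)\le0$, with equality only if there is no vertex. Therefore $\chi(P)=\chi(Q)=0$, so $\mathcal Z(\phi)\cap C_i$ is a disjoint union of circles cutting $C_i$ into two annuli --- that is, a single core-parallel circle, isotopic to $\gamma_i$. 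Since the $C_i$ are disjoint and the $\gamma_i$ pairwise non-isotopic, $\mathcal Z(\phi)$ is isotopic to $G$, and taking $S_G:=S_L$ for $L$ small finishes the proof. I expect the main difficulty to lie in the third step --- upgrading the $L^2$-convergence of $\phi$ to uniform control strong enough to confine $\mathcal Z(\phi)$ to the pinching collars, the behaviour near the fixed cusps when $n>0$ needing particular care --- with the topological identification in the last step being comparatively soft; it is there, however, that the minimality of $G$ and the hypothesis that $G$ separates $S$ into exactly two pieces are precisely what is used.
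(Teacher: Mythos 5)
Your proposal follows the same route as the paper's proof: pinch the curves of $G$ along a Fenchel--Nielsen family, use the fact that the limit surface has exactly two components to force $\lambda_1\to 0$ while $\lambda_2$ stays bounded away from zero (giving $s(S_L)<\epsilon$, smallness and simplicity of $\lambda_1$), pass to the limit of the normalized first eigenfunctions to obtain opposite-sign non-zero constants on the two pieces, and finally read the nodal set off the pinching collars. Where you differ is mostly in the degree of detail and in the final topological count. The paper simply cites \cite{C-C} for the eigenvalue asymptotics and \cite[Theorem 3.34]{M} for convergence of the eigenfunctions, then proves the opposite-sign claim (Claim~\ref{sign}) by a mean-zero computation; you instead sketch a Rayleigh-quotient/eigenbasis argument, and the one step you state too loosely is ``$\Delta r = c_1\lambda_1\phi\to0$ uniformly'': this identity holds only off the collars, and its uniform smallness presupposes a pointwise bound on $\phi$, which is what is being proved. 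The fix is to write $(\Delta+\lambda_1)r = -\lambda_1(\psi_L-c_0)$, whose right side is uniformly bounded, and then apply interior elliptic estimates on the thick parts; this is exactly what the cited reference supplies, so it is a local repair, not a gap. On the other hand, two points of yours are actually more complete than the paper's: you argue explicitly that $\mathcal Z(\phi)\subset\bigsqcup_i C_i$, including the sign of $\phi$ in the fixed cusps when $n>0$ (the paper only notes that each $C_i$ meets the nodal set and never addresses the cusps), and at the end you run an Euler-characteristic count in the annulus $C_i$ together with Cheng's local structure (\tref{cheng}) to see that $\mathcal Z(\phi)\cap C_i$ has no vertices and is a single core-parallel circle, whereas the paper rules out nullhomotopic and multiple core-parallel loops by appealing to Lemma~\ref{O}. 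The two closing arguments are interchangeable --- both rest on Courant's theorem (two nodal domains), Cheng, and Euler characteristic --- but yours makes the containment of the nodal set in the collars explicit, which the paper leaves implicit.
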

\begin{rem}
It is not very difficult to construct two collections of curves on $S$, as in the above lemma, that are not isotopic. In particular for $(g, n)= (2, 0)$ claim \ref{description} provides two such collections. Therefore the above lemma indeed provide two surfaces $S_1$ and $S_2$ in $\mathcal{M}_2$ such that $S_1$, $S_2 \in \mathcal{M}_2 \setminus {\mathcal{B}_2}(\frac{1}{4})$ and $\mathcal{Z}(\phi_{S_1})$ is not isotopic to $\mathcal{Z}(\phi_{S_2})$.
\end{rem}
Proof of Proposition \ref{diffisotopy} uses the behavior of sequences of small eigenpairs over degenerating sequences of hyperbolic surfaces. For precise definitions of these concepts we refer the reader to \cite{M}.
\begin{proof} Without loss of generality we may assume that each curve in $G$ is a geodesic. Extend $G$ to a pants decomposition $P = (\gamma_i)_{i=1}^{3g-3+n}$ of $S$. Let $(l_i, \theta_i)$ denote the Fenchel-Nielsen coordinates on $\mathcal{T}_{g, n}$ with respect to $(\gamma_i)_{i=1}^{3g-3+n}$. Here $l_i$ denotes the length parameter and $\theta_i$ denotes the twist parameter along $\gamma_i$.

Now consider the sequence of surfaces $({S_m})$ in ${\mathcal{T}_{g, n}}$ such that $l_i(S_m) = \frac{1}{m}$ for $i \le k$, $l_j = c_1 > 0$ for $j > k$ and $\theta_j = c_2 >0$ for $1 \le j \le 3g-3+n$. Then, up to extracting a subsequence, $({S_m})$ converges to a finite area hyperbolic surface ${S_\infty} \in \partial{\mathcal{M}_{g, n}}$. Let us denote the extracted subsequence by $(S_m)$ it self. Observe that $S_\infty$ is obtained from $S$ by pinching the geodesics in $G$. Namely, for each $i=1, ..., k$ there is a geodesic $\gamma^m_i$ in $S_m$, in the homotopy class of $\gamma_i$, whose length tends to zero as $m \to \infty$.

The number of components of ${S_\infty} \in \overline{\mathcal{M}_{g, n}}$ is exactly two. Hence by \cite{C-C}, ${\lambda_1}({S_m}) \to 0$ and all other eigenvalues of $S_m$ stay away from zero. In particular ${\lambda_1}({S_m})$ is simple for $m$ sufficiently large. Let $\phi_{S_m}$ be the $\lambda_1(S_m)$-eigenfunction with $L^2$-norm $1$. Recall that we want to prove that for any $\epsilon, \delta>0$ there exists a $S_G$ with $s(S_G) < \epsilon$ such that $\lambda_1(S_G) < \delta$ is simple and the nodal set of the $\lambda_1(S_G)$-eigenfunction is isotopic to $G$. Since $s(S_m) \to 0$ by construction and ${\lambda_1}({S_m}) \to 0$ by above it suffices to prove that $\mathcal{Z}(\phi_{S_m})$ is isotopic to $G$ for sufficiently large $m$.

Now we apply \cite[Theorem 3.34]{M} to extract a subsequence of $\phi_{S_m}$ that converges uniformly over compacta to a $0$-eigenfunction $\phi_\infty$ of $S_\infty$ with $L^2$-norm $1$. Let us denote the extracted subsequence by $(S_m)$ itself. Since $0$-eigenfunctions are constant functions, $\phi_\infty$ is constant on each components of $S_\infty$.
\begin{claim}\label{sign}
The two constant values of $\phi_\infty$ on the two components of $S_\infty$ are non-zero and have opposite sign.
\end{claim}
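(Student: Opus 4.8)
The plan is to transfer the mean-zero normalisation of the approximating eigenfunctions $\phi_{S_m}$ to the limit $\phi_\infty$. Write $Y_1, Y_2$ for the two components of $S_\infty$ and let $a_1, a_2 \in \mathbb{R}$ be the constant values of $\phi_\infty$ on $Y_1$ and $Y_2$. Since $\lambda_1(S_m) > 0$, the eigenfunction $\phi_{S_m}$ is $L^2$-orthogonal to the constants, so $\int_{S_m} \phi_{S_m}\, d\mu_{S_m} = 0$ for every $m$. I would then show that this forces $\int_{S_\infty} \phi_\infty\, d\mu_{S_\infty} = 0$, i.e.\ $a_1 \vol(Y_1) + a_2 \vol(Y_2) = 0$; the claim follows at once from this identity.

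To pass to the limit, fix a compact subset $L \subset S_\infty$. Geometric convergence $S_m \to S_\infty$ furnishes, for $m$ large, almost-isometric embeddings of $L$ into $S_m$, under which $\phi_{S_m}|_L \to \phi_\infty|_L$ uniformly and $\vol_{S_m}(L) \to \vol_{S_\infty}(L)$; hence $\int_L \phi_{S_m}\, d\mu_{S_m} \to \int_L \phi_\infty\, d\mu_{S_\infty}$. On the complement, Cauchy--Schwarz together with $\|\phi_{S_m}\|_{L^2(S_m)} = 1$ gives
\[
\Bigl| \int_{S_m \setminus L} \phi_{S_m}\, d\mu_{S_m} \Bigr| \le \vol(S_m \setminus L)^{1/2},
\]
and since $\vol(S_m) = 2\pi\,|\chi(S)|$ is independent of $m$ by Gauss--Bonnet, $\vol(S_m \setminus L) = 2\pi\,|\chi(S)| - \vol_{S_m}(L) \to \vol_{S_\infty}(S_\infty \setminus L)$, which is small once $L$ exhausts enough of the finite-area surface $S_\infty$. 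Likewise $\int_{S_\infty \setminus L} |\phi_\infty|\, d\mu_{S_\infty} \le \vol_{S_\infty}(S_\infty \setminus L)^{1/2}$ is small for $L$ large. Combining these three estimates with $\int_{S_m} \phi_{S_m} = 0$ in the routine three-$\varepsilon$ fashion (first choosing $L$, then $m$) yields $\int_{S_\infty} \phi_\infty\, d\mu_{S_\infty} = 0$, hence $a_1 \vol(Y_1) + a_2 \vol(Y_2) = 0$.

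It remains to read off the claim. Since $\vol(Y_1)$ and $\vol(Y_2)$ are positive and finite, $a_1 = 0$ forces $a_2 = 0$, hence $\phi_\infty \equiv 0$, contradicting $\|\phi_\infty\|_{L^2(S_\infty)} = 1$; so $a_1 \ne 0$ and, symmetrically, $a_2 \ne 0$. Then $a_2 = -a_1\, \vol(Y_1)/\vol(Y_2)$ has sign opposite to that of $a_1$, which is exactly the assertion of the claim.

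The substantive input lies not in this last bookkeeping but in the two facts already recalled from \cite{M}: that the eigenfunctions $\phi_{S_m}$ converge uniformly on compacta to $\phi_\infty$ and that $\phi_\infty$ still has unit $L^2$-norm --- equivalently, that no $L^2$-mass escapes into the collapsing collars around $\gamma^m_1, \dots, \gamma^m_k$ or into the cusps. This non-concentration is precisely what guarantees that the limit remembers the mean-zero condition instead of degenerating to a constant of a single sign (or to $0$), and it is the only place where the analytic theory of small eigenpairs on degenerating surfaces is genuinely used; I expect this (already established above) to be the true content, with the argument sketched here being elementary once it is granted.
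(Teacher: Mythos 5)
Your proposal is correct and follows essentially the same strategy as the paper: transfer the mean-zero condition $\int_{S_m}\phi_{S_m}=0$ to the limit via uniform convergence on compacta together with a Cauchy--Schwarz bound on the vanishing-volume complement, then combine $\int_{S_\infty}\phi_\infty=0$ with $\lVert\phi_\infty\rVert_{L^2}=1$. The only (minor) deviation is that the paper works with the $\epsilon$-thin/thick decomposition and first extracts a non-concentration estimate $\lim_m\lVert\phi_{S_m}\rVert_{L^2(S_m^{(0,\epsilon)})}\le\delta$ before applying H\"older, whereas you exhaust $S_\infty$ by an arbitrary compact set $L$ and use the blunt global bound $\lVert\phi_{S_m}\rVert_{L^2(S_m\setminus L)}\le 1$, which suffices since $\vol(S_m\setminus L)\to 0$; this streamlines the tail estimate without changing the substance.
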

\begin{proof}
For $\epsilon>0$ let us denote the $L^2$-norm of $\phi_{S_m}$ restricted to $S_m^{(0, \epsilon)}$ by $\lVert {\phi_{S_m}} \rVert_{S_m^{(0, \epsilon)}}$. By the uniform convergence of $\phi_{S_m}$ to $\phi_\infty$ over compacta we have
\begin{equation*}
{\int_{S_\infty^{[\epsilon, \infty)}}} {\phi^2_\infty} = {\lim_{m \to \infty}} {\int_{S_m^{[\epsilon, \infty)}}} {\phi^2_{S_m}} = 1 - {\lim_{m \to \infty}} \lVert {\phi_{S_m}} \rVert^2_{S_m^{(0, \epsilon)}}.
\end{equation*}
Since ${\int_{S_\infty}} {\phi^2_\infty} = {\lim_{\epsilon \to 0}}{\int_{S_\infty^{[\epsilon, \infty)}}} {\phi^2_\infty} = 1$ we obtain that for any $\delta>0$ there exists $\epsilon>0$ such that ${\lim_{m \to \infty}}\lVert {\phi_{S_m}} \rVert_{S_m^{(0, \epsilon)}} \le \delta$. Now
\begin{equation*}
|{\int_{S_\infty^{[\epsilon, \infty)}}} {\phi_\infty}| = {\lim_{m \to \infty}} |{\int_{S_m^{[\epsilon, \infty)}}} {\phi_{S_m}}| = |0 - {\lim_{m \to \infty}} {\int_{S_m^{(0, \epsilon)}}} {\phi_{S_m}}|$$$$ \le {\lim_{m \to \infty}}\sqrt{|S_m^{(0, \epsilon)}|} \lVert {\phi_{S_m}} \rVert_{S_m^{(0, \epsilon)}} (\text{ by Holder inequality}) \le \delta{\lim_{m \to \infty}}\sqrt{|S_m^{(0, \epsilon)}|}.
\end{equation*}
Here $|S_m^{(0, \epsilon)}|$ denotes the area of $S_m^{(0, \epsilon)}$. Recall that, for any $m \in \mathbb{N} \cup {\infty}$, ${\lim_{\epsilon \to 0}}|S_m^{(0, \epsilon)}| = 0$. So for $m \ge 1$ and $\epsilon$ sufficiently small:
\begin{equation*}
|{\int_{S_\infty^{[\epsilon, \infty)}}} {\phi_\infty}| < \delta ~~ \text{and} ~~ |S_m^{(0, \epsilon)}| < \delta.
\end{equation*}
Finally, taking $\epsilon$ to be sufficiently small, we calculate:
\begin{equation*}
 |{\int_{S_\infty}} {\phi_\infty}| \le |{\int_{S_\infty^{[\epsilon, \infty)}}} {\phi_\infty}| + |{\int_{S_\infty^{(0, \epsilon)}}} {\phi_\infty}| \le \delta + \sqrt{|S_\infty^{(0, \epsilon)}|} \lVert {\phi_{S_\infty}} \rVert_{S_\infty^{(0, \epsilon)}} \le 2\delta
\end{equation*}
since $\lVert {\phi_{S_\infty}} \rVert_{S_\infty^{(0, \epsilon)}} < \lVert {\phi_{S_\infty}} \rVert = 1.$ Since $\delta$ is arbitrary we conclude that ${\int_{S_\infty}} {\phi_\infty} = 0$. Hence ${\phi_\infty}$ has $L^2$-norm $1$ and mean zero.

Since $\phi_\infty$ has $L^2$-norm $1$ at least one of the two constant values of $\phi_\infty$ on the two components of $S_\infty$ is non-zero. Since $\phi_\infty$ has mean zero both of these values are non-zero have opposite sign.
\end{proof}
As the length of $\gamma^m_i$ tends to zero, we may assume that the collar neighborhood $C^m_i$ of $\gamma^m_i$ with two boundary components of length $1$ embeds in $S_m$ and $(C^m_i)_{i=1}^k$ are mutually disjoint. At this point we recall that $G$ is minimal in the sense that no proper subset of $G$ can separate $S$. Hence not only $S_m \setminus {\cup_{i=1}^k}(C^m_i)$ separates $S$ in exactly two components but also no proper sub-collection of $(C^m_i)_{i=1}^k$ can separate $S_m$. In particular, for each $i$, the limits of the two components of $\partial{C^m_i}$ belong to two different components of $S_\infty$. Using claim \ref{sign} let us denote the limits of these two boundary sets by $B_i^\infty(+)$ and $B_i^\infty(-)$ such that ${\phi_\infty}|_{B_i^\infty(+)} > 0$ and ${\phi_\infty}|_{B_i^\infty(-)} < 0$. Correspondingly denote the two components of $\partial{C^m_i}$ by $B_i^m(+)$ and $B_i^m(-)$ such that $B_i^\infty(\pm)$ is the limit of $B_i^m(\pm)$ respectively. By the uniform convergence of $\phi_{S_m}$ to $\phi_\infty$ over compacta we conclude that, for sufficiently large $m$, ${\phi_{S_m}}|_{B_i^m(+)} > 0$ and ${\phi_{S_m}}|_{B_i^m(-)} < 0$. Hence, for $m$ sufficiently large, at least one component of $\mathcal{Z}({\phi_{S_m}})$ is contained in $C^m_i$. Let $Z_i$ denote the union of the components of $\mathcal{Z}({\phi_{S_m}})$ that are contained in $C^m_i$.

Let $\alpha$ be a simple closed loop in $Z_i$. Since $\pi_1(C^m_i)$ is $\mathbb{Z}$ there are only two possibilities for $\alpha$. Either it bounds a disc in $C^m_i$ or it is homotopic to $\gamma^m_i$. Since $\lambda_1(S_m)$ is small, each component of $S_m \setminus \mathcal{Z}({\phi_{S_m}})$ has negative Euler characteristic by lemma \ref{O}. This discards the possibility that $\alpha$ bounds a disc in $C^m_i$. Hence $\alpha$ is homotopic to $\gamma^m_i$. Let $\beta$ be another simple closed loop in $Z_i$. Then $\beta$ is also homotopic to $\gamma^m_i$ implying that one of the components of $S_m \setminus \mathcal{Z}({\phi_{S_m}})$ has non-negative Euler characteristic. This leaves us with the observation that each $C^m_i$ contains exactly one loop $\alpha^m_i$ from $\mathcal{Z}({\phi_{S_m}})$. By remark \ref{rcheng} $\alpha^m_i$ is in fact smooth. Therefore we have an isotopy of $S$ that sends $\alpha^m_i$ to $\gamma^m_i$. Combining these isotopies we obtain that $\mathcal{Z}({\phi_{S_m}})$ is isotopic to $(\gamma^m_i)_{i=1}^k$.
\end{proof}
It remains to show that ${\mathcal{B}_2}(\frac{1}{4})$ is unbounded. We argue by contradiction and assume that ${\mathcal{B}_2}(\frac{1}{4})$ is bounded. Then we have $\epsilon>0$ such that ${\mathcal{B}_2}(\frac{1}{4})$ is contained in the compact set $\mathcal{I}_\epsilon = \{S \in {\mathcal{M}_2}: s(S) \ge \epsilon\}$ \cite{B}. Now applying lemma \ref{diffisotopy} obtain $S_1$ and $S_2$ in ${\mathcal{M}_2}$ such that $s(S_i)< \epsilon$, $\lambda_1(S_i) < \frac{1}{4}$ is simple and the nodal set of the $\lambda_1(S_1)$-eigenfunction is not isotopic to $\lambda_1(S_2)$-eigenfunction. Since ${\mathcal{M}_2} \setminus \mathcal{I}_\epsilon$ is path connected (see lemma \ref{pconn}) we may have a path $\beta$ in ${\mathcal{M}_2} \setminus \mathcal{I}_\epsilon$ that joins $S_1$ and $S_2$. Then lemma \ref{isotopy} implies that the nodal set of the $\lambda_1(S_1)$-eigenfunction is isotopic to $\lambda_1(S_2)$-eigenfunction. This is a contradiction to our choice of $S_1$ and $S_2$.
\subsection{Proof of Theorem \ref{E2}}
The case $(g, n)= (2, 0)$ follows from the above theorem. It remains to show theorem \ref{E2} for $(g, n)= (1, 2)$ and $(0, 4)$. \textbf{For the rest of the proof we refer to the pair $(g, n)$ for only these two cases.} We argue by contradiction and assume that $\mathcal{M}_{g, n} \setminus {\mathcal{C}_{g, n}}(\frac{1}{4})$ is connected. By definition $\lambda_1(S) < \frac{1}{4}$ for any $S \in \mathcal{M}_{g, n} \setminus {\mathcal{C}_{g, n}}(\frac{1}{4})$. Hence $\lambda_1(S)$ is an eigenvalue and by \cite{O-R} it is the only non-zero small eigenvalue of $S$. So we can consider the first non-constant eigenfunction $\phi_S$ of $S$. As before let $\mathcal{Z}(\phi_S)$ be the nodal set of $\phi_S$. Denote by $\overline{S}$ the surface obtained from $S$ by filling in its punctures and by $\overline{\mathcal{Z}(\phi_S)}$ the closure of $\mathcal{Z}(\phi_S)$ in $\overline{S}$. By lemma \ref{O} $\overline{\mathcal{Z}(\phi_S)}$ is a finite graph. Now apply Euler-Poincar\'{e} formula to the cell decomposition of $\overline{S}$ defined as follows: the punctures on $S$ that do not lie on $\overline{\mathcal{Z}(\phi_S)}$ is the zero skeleton, $\overline{\mathcal{Z}(\phi_S)}$ is the one skeleton and $S \setminus \overline{\mathcal{Z}(\phi_S)}$ is the two skeleton. If $k$ is the number of punctures of $S$ that do not lie on $\overline{{\mathcal Z}({\phi_S})}$ then
\begin{equation}\label{euler-poincare}
\chi (\overline{S}) - k = \chi(S \setminus \overline{\mathcal{Z}(\phi_S)}) + \chi(\overline{\mathcal{Z}({\phi_S})}).
\end{equation}
By lemma \ref{O} each component of $S \setminus \overline{\mathcal{Z}(\phi_S)}$ has negative Euler characteristic and so $\chi(S \setminus \overline{\mathcal{Z}(\phi_S)}) \le -2$. For $(g, n) = (1, 2)$, $\chi (\overline{S})= 0$ and so we have the only possibility $k =2$ and $\chi({\overline{\mathcal Z}({\phi_S})})=0$. For $(g, n) = (0, 4)$, $\chi (\overline{S})= 2$ leaving us with the only possibility $k=4$ and $\chi({\overline{\mathcal Z}({\phi_S})})=0$. Hence none of the punctures of $S$ lie on the closure of the nodal set $\overline{{\mathcal Z}({\phi_S})}$ i.e. $\overline{{\mathcal Z}({\phi_S})}= {\mathcal Z}({\phi_S})$ is a compact subset of $S$. Since $\chi(\overline{\mathcal Z({\phi_S})})=0$ we conclude that ${\mathcal Z}({\phi_S})$ is a union of simple closed curves. Also by lemma \ref{O} we know that no loop in ${\mathcal Z}({\phi_S})$ can bound a disc and no two components of ${\mathcal Z}({\phi_S})$ can be homotopic. Summarizing these observations we get:
\begin{claim}\label{ne2}
Let $S \in \mathcal{M}_{g, n} \setminus {\mathcal{C}_{g, n}}(\frac{1}{4})$. \\*
$(i)$ If $(g, n) = (1, 2)$ then $\mathcal{Z}(\phi_S)$ consists of either exactly one simple closed curve or two simple closed curves. In the first case $\mathcal{Z}(\phi_S)$ divides $S$ into two components one of which is a surface of genus one with a copy of $\mathcal{Z}(\phi_S)$ as its boundary and the other one is a twice punctured sphere with a copy of $\mathcal{Z}(\phi_S)$ as its boundary. In the last case $\mathcal{Z}(\phi_S)$ divides $S$ into two components each of which is a once punctured sphere with two boundary components coming from $\mathcal{Z}(\phi_S)$.\\*
$(ii)$ If $(g, n)= (0, 4)$ then $\mathcal{Z}(\phi_S)$ consists of exactly one simple closed curve (there are two possibilities for this up to isotopy) that separates $S$ into two components each of which is a twice punctured sphere with one boundary component coming from $\mathcal{Z}(\phi_S)$.
\end{claim}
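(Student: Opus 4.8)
The plan is to push the structural facts just established into a brief topological census. Recall that, by Courant's theorem (\tref{courant}), $S$ has exactly two nodal domains $S^{+}$ and $S^{-}$, on which $\phi_S$ is positive and negative respectively; that, by \lref{O} together with the count $k$ of punctures avoiding the nodal set, $\mathcal{Z}(\phi_S)$ is a finite disjoint union of essential, pairwise non-homotopic smooth simple closed curves contained in the interior of $S$, with $\chi(\mathcal{Z}(\phi_S))=0$; and that, from the cell decomposition of $\overline{S}$ used above, $\chi(S^{+})+\chi(S^{-})=\chi(S\setminus\mathcal{Z}(\phi_S))=\chi(\overline{S})-k=-2$, each of $\chi(S^{+}),\chi(S^{-})$ being strictly negative by \lref{O}. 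Hence $\chi(S^{+})=\chi(S^{-})=-1$.

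Next I would determine how each curve of $\mathcal{Z}(\phi_S)$ sits against the two nodal domains. Let $\gamma$ be a component of $\mathcal{Z}(\phi_S)$; it is a smooth simple closed curve by Remark~\ref{rcheng}. If $\nabla\phi_S$ vanished at a point of $\gamma$, that point would be a critical point lying on $\mathcal{Z}(\phi_S)$, and \tref{cheng}(ii) would force $\mathcal{Z}(\phi_S)$ near it to be an equiangular system of at least four rays, contradicting the smoothness of $\gamma$. Thus $\nabla\phi_S\neq 0$ everywhere on $\gamma$, so $\phi_S$ changes sign across $\gamma$; since $S^{+}$ and $S^{-}$ are the only nodal domains, and the local side of $\gamma$ on which $\phi_S>0$ is locally, hence globally, constant along $\gamma$, the curve $\gamma$ bounds $S^{+}$ on one side and $S^{-}$ on the other and contributes exactly one boundary circle to each. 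Writing $E\geq 1$ for the number of components of $\mathcal{Z}(\phi_S)$, it follows that $S^{+}$ and $S^{-}$ each have exactly $E$ boundary circles.

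Now I would realise $S^{\pm}$ as a surface of genus $h_{\pm}$ with $E$ boundary circles and $p_{\pm}$ punctures, constrained by $p_{+}+p_{-}=n$, $h_{\pm}\geq 0$, $E\geq 1$, and $\chi(S^{\pm})=2-2h_{\pm}-E-p_{\pm}=-1$, and solve. Adding the two Euler-characteristic equations and using $p_{+}+p_{-}=n$ gives $(h_{+}+h_{-})+E=3-n/2$. For $(g,n)=(1,2)$ this reads $(h_{+}+h_{-})+E=2$, leaving only $E\in\{1,2\}$: when $E=2$ one is forced to $h_{\pm}=0$ and $p_{\pm}=1$, so each $S^{\pm}$ is a once-punctured sphere with two holes; when $E=1$ the relations $2h_{\pm}+p_{\pm}=2$ and $p_{+}+p_{-}=2$ force one of $S^{\pm}$ to be a one-holed torus and the other a twice-punctured disc. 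For $(g,n)=(0,4)$ the relation reads $(h_{+}+h_{-})+E=1$, whence $E=1$, $h_{\pm}=0$ and then $p_{\pm}=2$: $\mathcal{Z}(\phi_S)$ is a single curve splitting $S$ into two twice-punctured discs, that is, separating the four punctures into two pairs. Throughout, the fact that no curve bounds a disc or a once-punctured disc and no two curves are homotopic, already supplied by \lref{O}, is precisely what excludes the degenerate complementary pieces, and one reads off the descriptions in (i) and (ii).

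The census itself is routine bookkeeping; the one point I would treat carefully is the sign-change step of the second paragraph, since it is what makes the boundary counts of $S^{+}$ and $S^{-}$ coincide and thereby rules out configurations — for instance a pair of pants glued to itself along two of its cuffs, with the third cuff capping a twice-punctured disc — that meet every Euler-characteristic constraint yet cannot be the nodal decomposition of a genuine eigenfunction. Once that step is in place, the statement follows by inspecting the two small linear systems above.
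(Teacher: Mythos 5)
Your proposal is correct and runs along essentially the same lines as the paper, which establishes the same list of structural facts (via the Euler--Poincar\'e count on the cell decomposition of $\overline S$, which forces $k=n$ and $\chi(\overline{\mathcal{Z}(\phi_S)})=0$, together with Otal's constraints from \lref{O}) and then states the claim as a ``summary of observations'' without spelling out the enumeration. What you add --- usefully --- is the explicit census together with the observation that $\nabla\phi_S$ does not vanish on any component of $\mathcal{Z}(\phi_S)$ (via Cheng/Remark~\ref{rcheng}), so that each nodal curve is flanked by $S^+$ on one side and $S^-$ on the other, forcing $S^+$ and $S^-$ to have the same number $E$ of boundary circles. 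That step is not spelled out in the paper but is genuinely needed in the $(1,2)$ case: without it, the Euler-characteristic and Otal constraints alone are still compatible with, for example, a pair of pants self-glued along two of its cuffs with the remaining cuff capped by a twice-punctured disc (so $S^+$ would have three boundary circles and $S^-$ only one) --- precisely the spurious configuration you flag and rule out. In the $(0,4)$ case the constraints $p_\pm\le 2$ and $p_++p_-=4$ already force $p_\pm=2$, $b_\pm=1$, $E=1$ without it, but it does no harm. With that step in place, the two small linear systems you write down determine the topology of $S^{\pm}$ exactly as stated, so the argument is complete.
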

Next we have the following modified version of claim \ref{isotopy}. Let $S \in \mathcal{M}_{g, n} \setminus {\mathcal{C}_{g, n}}(\frac{1}{4})$ with $\phi_S$ the $\lambda_1(S)$-eigenfunction.
\begin{claim}\label{isotopy1}
There exists a neighborhood $\mathcal{N}(S)$ of $S$ in $\mathcal{M}_{g, n}$ such that for any $S' \in \mathcal{N}(S)$: $\lambda_1(S')$ is simple and the nodal set $\mathcal{Z}(\phi_{S'})$ of the $\lambda_1(S')$-eigenfunction $\phi_{S'}$ is isotopic to $\mathcal{Z}(\phi_S)$.
\end{claim}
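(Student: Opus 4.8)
\textbf{Proof proposal for Claim \ref{isotopy1}.}
The plan is to mimic the proof of Claim \ref{isotopy}, replacing the two-nodal-domain argument (which used the closedness of $S$ and $\lambda_1 < 1/4$) by the corresponding facts for finite area surfaces with $\lambda_1(S) \le 1/4$ an eigenvalue, together with the key structural input from Claim \ref{ne2}. First I would record that, since $\lambda_1(S) < \frac{1}{4}$, by \cite{O-R} (as recalled in the discussion before Claim \ref{ne2}) $\lambda_1(S)$ is the only nonzero small eigenvalue of $S$, in particular it is simple; hence there is a neighborhood $\mathcal{N}'(S)$ in $\mathcal{M}_{g, n}$ over which $\lambda_1$ stays below $\frac14$ and stays simple, so that $\phi_{S'}$ is well defined (and may be normalized and given a fixed sign convention) for $S' \in \mathcal{N}'(S)$. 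This uses only continuity of $\lambda_1$ and of the gap to the next eigenvalue.

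Next I would use Claim \ref{ne2}, which already tells us that $\overline{\mathcal{Z}(\phi_S)} = \mathcal{Z}(\phi_S)$ is a \emph{compact} subset of $S$ (no puncture lies on it) consisting of essential simple closed curves cutting $S$ into exactly two pieces $S^+$ and $S^-$, on which $\phi_S$ is respectively positive and negative. Then I would choose a tubular neighborhood $\mathcal{T}_S$ of $\mathcal{Z}(\phi_S)$ in $S$, disjoint from the cusps, so that $S^\pm \setminus \mathcal{T}_S$ are compact sets on which $\phi_S$ is bounded away from zero. By the continuous dependence of the eigenfunction on $S'$ (uniformly on compacta — this is \cite[Theorem 3.36]{M}, cited already in the proof of Claim \ref{isotopy}, and the same statement applies to finite area surfaces) there is a neighborhood $\mathcal{N}(S) \subset \mathcal{N}'(S)$ such that for all $S' \in \mathcal{N}(S)$ one has $\phi_{S'} > 0$ on $S^+ \setminus \mathcal{T}_S$ and $\phi_{S'} < 0$ on $S^- \setminus \mathcal{T}_S$. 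Consequently $\mathcal{Z}(\phi_{S'}) \subset \mathcal{T}_S$.

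Finally I would conclude as in Claim \ref{isotopy}: since $\mathcal{Z}(\phi_{S'})$ is contained in the tubular neighborhood $\mathcal{T}_S$ of $\mathcal{Z}(\phi_S)$, and (applying Claim \ref{ne2} to $S'$) $\mathcal{Z}(\phi_{S'})$ is itself a disjoint union of essential simple closed curves separating $S'$ into two nodal domains with no two curves homotopic and none bounding a disc, the only way such a curve system can sit inside $\mathcal{T}_S$ is to be isotopic, component by component, to $\mathcal{Z}(\phi_S)$; hence $\mathcal{Z}(\phi_{S'})$ is isotopic to $\mathcal{Z}(\phi_S)$.

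The main obstacle I anticipate is the uniform-on-compacta convergence step in the non-compact setting: one must make sure that near the cusps nothing pathological happens, i.e. that the nodal set cannot escape into a cusp as $S' \to S$. This is handled by the a priori structural result of Claim \ref{ne2} (equivalently Lemma \ref{O}), which already forbids punctures from lying on $\overline{\mathcal{Z}(\phi_{S'})}$ for $S'$ with $\lambda_1 < 1/4$; combined with the choice of $\mathcal{T}_S$ away from the cusps, this confines $\mathcal{Z}(\phi_{S'})$ to a fixed compact region and the argument goes through exactly as in the closed case.
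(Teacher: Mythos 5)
Your overall plan — tubular neighborhood of $\mathcal{Z}(\phi_S)$, continuous dependence of $\phi_{S'}$, Claim \ref{ne2} to control the topology of $\mathcal{Z}(\phi_{S'})$ — is exactly the paper's approach, but there is a concrete flaw in the execution that the paper's proof is specifically arranged to avoid.

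You write that $S^\pm\setminus\mathcal{T}_S$ ``are compact sets on which $\phi_S$ is bounded away from zero.'' Neither half of this is right: these sets contain the cusp ends of $S$, so they are not compact, and $\phi_S$ need not be bounded away from zero on them (in a cusp the eigenfunction has the Fourier expansion \eqref{expres}; if the zero mode $\phi_0$ vanishes at that cusp the function decays exponentially). Consequently the uniform-on-compacta convergence from \cite[Theorem 3.36]{M} cannot by itself give the inclusion $\mathcal{Z}(\phi_{S'})\subset\mathcal{T}_S$. You do flag this difficulty in your last paragraph, but the proposed repair — that Claim \ref{ne2}/Lemma \ref{O} ``confines $\mathcal{Z}(\phi_{S'})$ to a fixed compact region'' — does not follow from what you cite: Lemma \ref{O} guarantees that each $\mathcal{Z}(\phi_{S'})$ is compact, not that the family of nodal sets stays in a compact set uniformly as $S'$ varies.

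The paper sidesteps the issue by never trying to prove $\mathcal{Z}(\phi_{S'})\subset\mathcal{T}_S$. It controls the sign of $\phi_{S'}$ only on the \emph{compact} boundary $\partial\mathcal{T}_S=\partial\mathcal{T}_S^+\cup\partial\mathcal{T}_S^-$, which yields merely that $\mathcal{Z}(\phi_{S'})$ has at least one component inside each annulus of $\mathcal{T}_S$, each necessarily isotopic to the corresponding core curve of $\mathcal{Z}(\phi_S)$. It then applies Claim \ref{ne2} \emph{to $S'$}: the nodal set $\mathcal{Z}(\phi_{S'})$ has at most the complexity allowed there (one curve for $(0,4)$, one or two curves for $(1,2)$, with prescribed complementary pieces), and one checks that once a component isotopic to a curve of $\mathcal{Z}(\phi_S)$ is forced, the only configuration compatible with Claim \ref{ne2} is the one isotopic to $\mathcal{Z}(\phi_S)$. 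So the structural constraint on $\mathcal{Z}(\phi_{S'})$ is used as an upper bound on how much nodal set there can be, rather than as a confinement argument. If you replace your ``bounded away from zero on $S^\pm\setminus\mathcal{T}_S$'' step with sign control on $\partial\mathcal{T}_S$ alone and finish via Claim \ref{ne2} as above, your proof matches the paper's and the gap disappears.
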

\begin{proof}
Since $\lambda_1(S)$ is $< \frac{1}{4}$, $\lambda_1$ is a continuous function in a neighborhood of $S$ by \cite{H}(see also \cite{C-C}, \cite{M}) and so we have a neighborhood $\mathcal{N'}(S)$ of $S$ which is contained in $\mathcal{M}_{g, n} \setminus {\mathcal{C}_{g, n}}(\frac{1}{4})$. In particular, $\phi_{S'}$ is well-defined for $S' \in \mathcal{N'}(S)$ and $\mathcal{Z}(\phi_{S'})$ has the description in claim \ref{ne2}. Now consider a tubular neighborhood $\mathcal{T}_S$ of $\mathcal{Z}(\phi_S)$ in $S$ such that $\partial{\mathcal{T}_S}$ has two components $\partial{\mathcal{T}^+_S}$ and $\partial{\mathcal{T}^-_S}$ each of which is a simple closed curve with ${\phi_S}|_{\partial{\mathcal{T}^+_S}} >0$ and ${\phi_S}|_{\partial{\mathcal{T}^-_S}} <0$.

Now $\lambda_1 < \frac{1}{4}$ and simple on $\mathcal{N'}(S)$. Hence by \cite{H} for any compact subset $K$ of $S$ the map: $\Phi: K \times \mathcal{N'}(S) \to \mathbb{R}$ given by $\Phi(x, S')= \phi_{S'}(x)$ is continuous. Considering $K = \partial{\mathcal{T}_S}$ we obtain $\mathcal{N}(S) \subset \mathcal{N'}(S)$ such that for any $S' \in \mathcal{N}(S)$: ${\phi_{S'}}|_{\partial{\mathcal{T}^+_S}} >0$ and ${\phi_{S'}}|_{\partial{\mathcal{T}^-_S}} <0$. In particular, for any $S' \in \mathcal{N}(S)$: $\mathcal{Z}(\phi_{S'})$ has a component inside $\mathcal{T}_S$. Hence by the description of $\mathcal{Z}(\phi_{S'})$ in claim \ref{ne2} we obtain the claim.
\end{proof}
Since by our assumption $\mathcal{M}_{g, n} \setminus {\mathcal{C}_{g, n}}(\frac{1}{4})$ is connected the above claim implies that only one of the two possibilities in claim \ref{ne2} can actually occur. This is a contradiction to proposition \ref{diffisotopy}.

Now we show that ${\mathcal{C}_{1, 2}}(\frac{1}{4})$ is unbounded. We argue by contradiction and assume that ${\mathcal{C}_{1, 2}}(\frac{1}{4})$ is bounded. Then we have $\epsilon>0$ such that ${\mathcal{C}_{1, 2}}(\frac{1}{4})$ is contained in the compact set $\mathcal{I}_\epsilon = \{S \in {\mathcal{M}_{1, 2}}: s(S) \ge \epsilon\}$ \cite{B}. Applying lemma \ref{diffisotopy} we obtain $S_1$ and $S_2$ in ${\mathcal{M}_{1, 2}}$ such that $s(S_i)< \epsilon$, $\lambda_1(S_i) < \frac{1}{4}$ is simple and the nodal set of the $\lambda_1(S_1)$-eigenfunction is not isotopic to $\lambda_1(S_2)$-eigenfunction. Since ${\mathcal{M}_{1, 2}} \setminus \mathcal{I}_\epsilon$ is path connected (see lemma \ref{pconn}) we may have a path $\beta$ in ${\mathcal{M}_{1, 2}} \setminus \mathcal{I}_\epsilon$ that joins $S_1$ and $S_2$. Then lemma \ref{isotopy1} implies that the nodal set of the $\lambda_1(S_1)$-eigenfunction is isotopic to $\lambda_1(S_2)$-eigenfunction. This is a contradiction to our choice of $S_1$ and $S_2$.

\centerline{\includegraphics[height=4in]{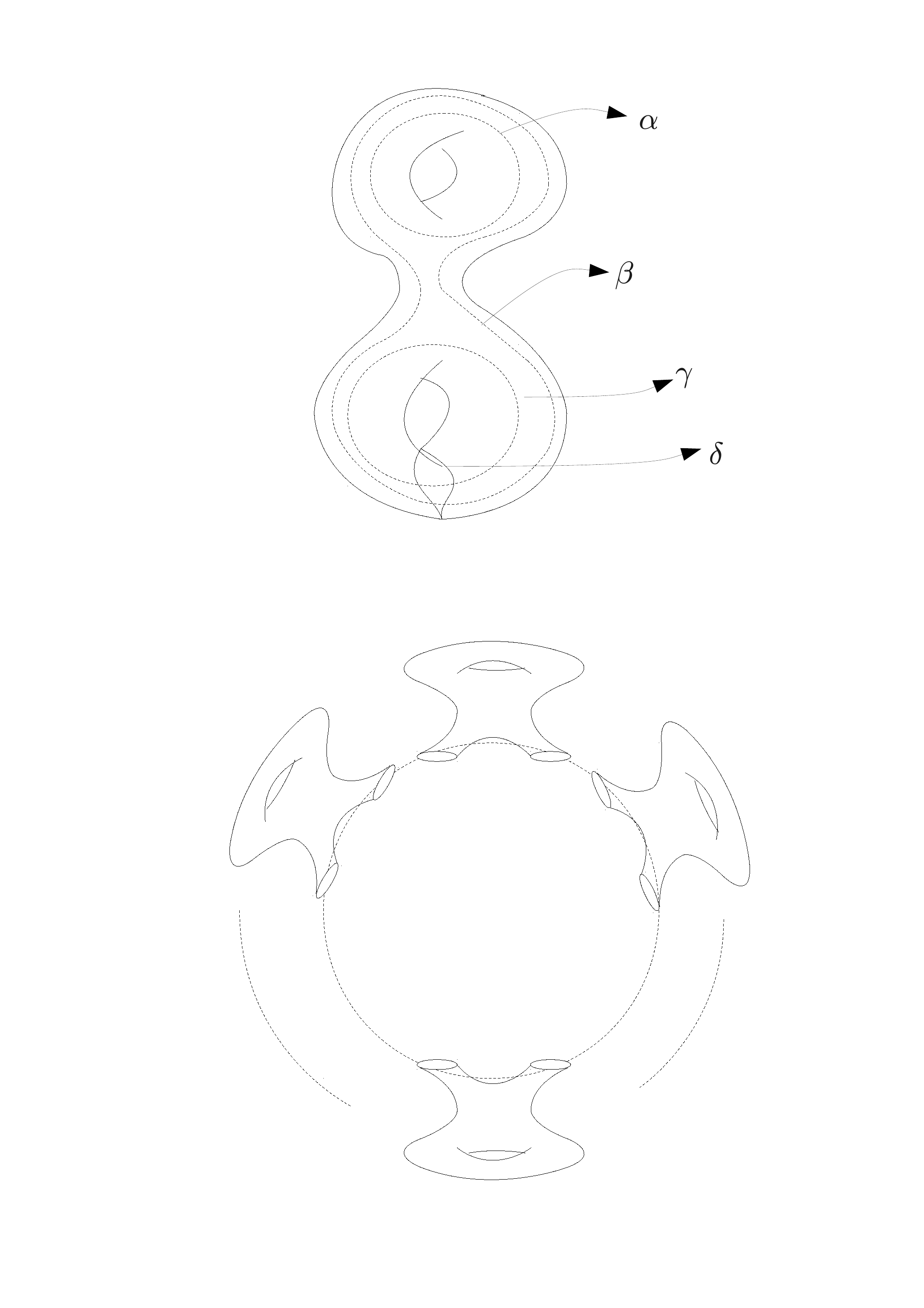}}

\section{Branches of eigenvalues}\label{brnch}
In this section we consider branches of eigenvalues along paths in $\mathcal{T}_g$. Main purpose of doing so is that the multiplicity of $\lambda_i$, in particular $\lambda_1$ is not one in general. Therefore along 'nice' paths in $\mathcal{T}_g$ the functions $\lambda_i$ may not be 'nice' enough (see introduction). However, theorem \ref{b-c} shows that up to certain choice at points of multiplicity $\lambda_i$'s are in fact 'nice'. This 'nice' choice makes $\lambda_i$ into a branch of eigenvalues. Theorem \ref{br} says that if we restrict ourselves to branches of eigenvalues then we have a positive answer to conjecture \ref{conj}, namely there are branches of eigenvalues that start as $\lambda_1$ and becomes more than $\frac{1}{4}$.
\begin{proof}[Proof of Theorem \ref{br}]
We begin by explaining the the embedding $\Pi : {\mathcal{T}_2} \to {\mathcal{T}_g}$ (see the next figure). Let $S$ be the closed hyperbolic surface of genus two and $\alpha, \beta, \gamma$, $\delta$ are four geodesics on $S$ as in the following picture. Now cut $S$ along $\delta$ to obtain a hyperbolic surface $S^*$ with genus one and two geodesic boundaries (each a copy of $\delta$). Consider $g-1$ many copies of $S^*$ and glue them along their consecutive boundaries after arranging them along a circle as in the picture below. Let $\Pi(S)$ denote the resulting hyperbolic surface.

Now take a geodesic pants decomposition $(\xi_i)_{i=1, 2, 3}$ of $S$ involving $\delta= {\xi_3}$ and consider the Fenchel-Nielsen coordinates $({l_i}, {\theta_i})_{i=1, 2, 3}$ on $\mathcal{T}_2$ with respect to this pants decomposition. Here ${l_i} = l(\xi_i)$ is the length of the closed geodesic $\xi_i$ and ${\theta_i}$ is the twist parameter at $\xi_i$. The images of $(\xi_i)_{i=1, 2, 3}$ in $\Pi(S)$, $(\xi_i^j)_{i=1, 2, 3; j=1, 2, ..., g-1}$ is a geodesic pants decomposition of $\Pi(S)$. Consider the the Fenchel-Nielsen coordinates $({l_i^j}, {\theta_i^j})_{i=1, 2, 3; j=1, 2,..., g-1}$ on $\mathcal{T}_g$ with respect to this pants decomposition. As before, ${l_i^j} = l(\xi_i^j)$ is the length of the closed geodesic $\xi_i^j$ and ${\theta_i^j}$ is the twist parameter at $\xi_i^j$. With respect to these pants decompositions $\Pi$ is expressed as
\begin{equation}
({l_1}, {l_2}, {l_3}, {\theta_1}, {\theta_2}, {\theta_3}) \to ({\underbrace{{l_1}, {l_2}, {l_3}, {\theta_1}, {\theta_2}, {\theta_3}}_{1}},..., {\underbrace{{l_1}, {l_2}, {l_3}, {\theta_1}, {\theta_2}, {\theta_3}}_{g-1}}).
\end{equation}
This is an analytic map and the image $\Pi(S)$ of any $S \in {\mathcal{T}_2}$ has an isometry $\tau$ of order $(g-1)$ that sends one $6$-tuple $({l_1}, {l_2}, {l_3}, {\theta_1}, {\theta_2}, {\theta_3})$ to the next one. Also $\Pi(S)/\tau$ is isometric to $S$ i.e. $\Pi(S)$ is a $(g-1)$ sheeted covering of $S$. Hence each eigenvalue of $S$ is also an eigenvalue of $\Pi(S)$. In particular, a branch ${\lambda_t}$ of eigenvalues in ${\mathcal{T}_2}$ along $\eta(t)$ is a a branch of eigenvalues in ${\mathcal{T}_g}$ along $\Pi(\eta(t))$.

To finish the proof we need only to find $S \in {\mathcal{T}_2}$ such that ${\lambda_1}(S) = {\lambda_1}(\Pi(S))$. Once we find such a $S$, we can consider any analytic path $\eta$ in ${\mathcal{T}_2}$ such that $\eta(o)=S$ and ${\lambda_1}(\eta(1))> \frac{1}{4}$. Then the branch of eigenvalues ${\lambda_t} = {\lambda_1}(\eta(t))$ along $\Pi(\eta(t))$ would be a branch that we seek.

To show this we employ the technique in claim \ref{diffisotopy}. Let $S_n$ be a sequence of surfaces of genus two on which the lengths of the geodesics $\alpha, \beta$ and $\gamma$ tends to zero. In particular, ${S_n} \to {S_\infty} \in {\mathcal{M}_{0, 3}} \cup {\mathcal{M}_{0, 3}}$ implying ${\lambda_1}(S_n) \to 0$ and ${\lambda_2}(S_n) \nrightarrow 0$. The sequence $\Pi(S_n)$ converges to a surface in ${\mathcal{M}_{0, g+1}} \cup {\mathcal{M}_{0, g+1}}$ and so ${\lambda_1}(\Pi(S_n)) \to 0$ and ${\lambda_2}(\Pi(S_n)) \nrightarrow 0$. So for large $n$, ${\lambda_1}(S_n)< {\lambda_2}(\Pi(S_n))$ implying ${\lambda_1}(S_n) = {\lambda_1}(\Pi(S_n))$.
\end{proof}
\section{Punctured spheres}
We begin this section by recapitulating the ideas in \cite{BBD}. By purely number theoretic methods Atle Selberg showed that for any congruence subgroup $\Gamma$ of SL$(2, \mathbb{Z})$, ${\lambda_1}(\mathbb{H}/\Gamma) \geq \frac{3}{16}$. The purpose in \cite{BBD} was to construct explicit closed hyperbolic surfaces with $\lambda_1$ close to $\frac{3}{16}$. To achieve this goal the authors of \cite{BBD} considered principal congruence subgroups $\Gamma_n$ (see introduction) and corresponding finite area hyperbolic surfaces $\mathbb{H}/{\Gamma_n}$. Then they replaced the cusps in $\mathbb{H}/{\Gamma_n}$, which is even in number, by closed geodesics of small length $t$ and glued them in pairs (see \cite{BBD} for details). The surface $S_t$ obtained in this way is closed, their genus $g$ is independent of $t$ and as $t \to 0$, ${S_t} \to \mathbb{H}/{\Gamma_n}$ in the compactification of the moduli space $\mathcal{M}_g$. Rest of the proof showed that $\lambda_1$ is lower semi-continuous over the family $S_t$. This approach together with the result of Kim and Sarnak provides theorem \ref{aprox}.

Limiting properties of eigenvalues over degenerating family of hyperbolic metrics have been studied well in the literature (to name a few Denis Hejhal \cite{H}, Gilles Courtois-Bruno Colbois \cite{C-C}, Lizhen Ji \cite{Ji}, Scott Wolpert \cite{Wo}, Chris Judge \cite{J}) (see also \cite[Theorem 2]{M}). These limiting results can be summarized as:
\begin{thm}
Let $(S_m)$ be a sequence of hyperbolic surfaces in $\mathcal{M}_{g, n}$ that converges to a finite area hyperbolic surface $S \in \partial{\mathcal{M}_{g, n}}$. Let $({\lambda_m}, {\phi_m})$ be an eigenpair of $S_m$ such that ${\lambda_m} \to \lambda< \infty$. Then, up to extracting a subsequence and up to rescalling, the sequence $(\phi_m)$ converges to a generalized eigenfunction over compacta if one of the following is true \\*
$(i)$ $n=0$ (\cite{Ji}) $(ii)$ $n \neq 0$ and $\lambda <\frac{1}{4}$ (\cite{H}, \cite{C-C}) $(iii)$ $n \neq 0$ and $\lambda >\frac{1}{4}$ (\cite{Wo}) $(iii)$ $n \neq 0$, ${\lambda_m} \leq \frac{1}{4}$ and $\phi_m$ is cuspidal (\cite{M}).
\end{thm}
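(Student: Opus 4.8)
Since this theorem is a synthesis of the four cited results, the plan is to isolate the mechanism common to all of them and to point out exactly where each hypothesis is used. That mechanism has five steps: (1) realize the convergence $S_m\to S$ as $C^\infty$-convergence of metrics on an exhausting family of compact subsurfaces; (2) normalize, i.e.\ ``rescale'', the $\phi_m$; (3) prove a uniform bound for $\phi_m$ on each compact subsurface; (4) extract a subsequence converging in $C^\infty_{loc}(S)$ via interior elliptic estimates and a diagonal argument; (5) check that the limit is a generalized eigenfunction. Steps (1) and (4)--(5) are routine once (2)--(3) are in place, and the entire content of the dichotomy $(i)$--$(iv)$ sits in step (3), which is the step I expect to be the real obstacle.

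For step (1): along the degenerating family a fixed disjoint collection of simple closed curves is pinched, their lengths $\ell_m$ tending to $0$, and $S$ is obtained from the topological surface by replacing each pinched curve with a pair of cusps. Fixing an exhaustion $K_1\subset K_2\subset\cdots$ of $S$ by compact sets (absorbing more and more of the cusp regions of $S$), one has for each $j$ and all large $m$ a smooth embedding $f_{m,j}:K_j\hookrightarrow S_m$ with $f_{m,j}^{*}g_m\to g_S|_{K_j}$ in $C^\infty$, the complement of $f_{m,j}(K_j)$ in $S_m$ being a union of the original cusps of $S_m$ and standard thin collars around the pinching geodesics. Pulled back by $f_{m,j}$, the $\phi_m$ solve $\Delta\phi_m+\lambda_m\phi_m=0$ for Laplacians converging to $\Delta_S$ in $C^\infty(K_j)$, with $\lambda_m\to\lambda$. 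For step (2): when $n=0$ or ($n\neq0$ and $\lambda<\frac{1}{4}$) the normalization $\|\phi_m\|_{L^2(S_m)}=1$ suffices, whereas when $n\neq0$ and $\lambda>\frac{1}{4}$ this normalization can allow all of the $L^2$-mass to escape into the collars, so one instead rescales $\phi_m$ so that, say, $\sup_{f_{m,1}(K_1)}|\phi_m|=1$; this is the ``rescalling'' of the statement.

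Step (3), the bound $\|\phi_m\|_{L^2(K_j)}\le C_j$ (and, when possible, non-vanishing of the limit), is the crux, and I would derive it by analyzing $\phi_m$ inside a thin collar of a pinching geodesic, developing $\phi_m$ into Fourier modes in the direction of the core geodesic. Each mode satisfies a Sturm--Liouville equation in the transverse variable whose solutions behave, as one approaches the core, like powers of the distance, the exponents being \emph{real} when $\lambda<\frac{1}{4}$ and \emph{complex} (oscillatory) when $\lambda>\frac{1}{4}$. In the regime $\lambda<\frac{1}{4}$, and in the cuspidal case where the zero mode is absent, the $L^2$-mass of $\phi_m$ over the collar is dominated by the values of $\phi_m$ on the two boundary horocycles, which lie inside $f_{m,j}(K_j)$; summing over the finitely many collars yields the bound, and under the $L^2$-normalization it moreover forces the limit to be non-zero --- this is the mechanism of \cite{C-C}, \cite{H}, and, for $n=0$, \cite{Ji}. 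In the regime $\lambda>\frac{1}{4}$ the oscillatory zero mode carries $L^2$-mass that is not controlled by the boundary values, which is precisely why the rescaling is needed and why the limit may acquire a non-trivial Eisenstein component at the cusps created by the pinching --- this is \cite{Wo}; and when $\lambda_m\le\frac{1}{4}$ with $\phi_m$ cuspidal the offending zero mode is absent throughout, even along sequences with $\lambda_m\to\frac{1}{4}$, giving the bound as in \cite{M}.

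Granting the bound, interior elliptic regularity on $K_{j+1}$ (using the $C^\infty$-convergence of the metrics and of $\lambda_m$) gives $\|\phi_m\|_{C^k(K_j)}\le C_{j,k}$ for every $k$; Arzel\`a--Ascoli on each $K_j$ together with a diagonal extraction over the exhaustion produces a subsequence converging in $C^\infty_{loc}(S)$ to some $\phi_\infty$ with $\Delta_S\phi_\infty+\lambda\phi_\infty=0$, i.e.\ convergence ``over compacta'' as claimed. Finally, applying the same collar-and-cusp mode analysis to $\phi_\infty$ shows that in every cusp of $S$, including those created by the pinching, $\phi_\infty$ has an expansion of the form \eqref{expres} with admissible growth, so it is a generalized eigenfunction. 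The conceptual obstacle, as noted, is step (3): the four cases in the statement are exactly those in which the transverse equation on the thin collars provides enough control to keep the limit from either degenerating to $0$ or failing to exist.
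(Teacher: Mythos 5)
The paper does not actually prove this theorem; it is stated as a synthesis of results in the literature, with each of the four cases attributed to a reference (\cite{Ji}, \cite{H} and \cite{C-C}, \cite{Wo}, and \cite{M}) and no argument given. So there is no ``paper's own proof'' to compare against; the relevant question is whether your sketch is a faithful account of the mechanism those references use, and by and large it is. The five-step structure (realizing the convergence as $C^\infty$-convergence on an exhaustion, normalizing, obtaining uniform bounds on compacta, interior elliptic regularity plus Arzel\`a--Ascoli and a diagonal argument, and then verifying the limit is a generalized eigenfunction) is exactly the skeleton common to these papers, and you correctly locate the entire case-distinction in the collar analysis of the zero Fourier mode: real exponents for $\lambda<\frac{1}{4}$, oscillatory for $\lambda>\frac{1}{4}$, and absent when $\phi_m$ is cuspidal.

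One inaccuracy worth flagging: you group $n=0$ with the cases where $L^2$-normalization on $S_m$ is adequate, but Ji's theorem for closed surfaces covers \emph{all} $\lambda$, and the escape-of-mass phenomenon in the pinching collars is governed by $\lambda$ alone (whether the transverse zero-mode exponents are real or complex), not by whether $S_m$ has cusps of its own. Thus for $n=0$ and $\lambda>\frac{1}{4}$ one in general still needs the sup-rescaling on a fixed thick part, and the limit may again be Eisenstein-like in the cusps created by pinching; the $L^2$-normalization is safe only in the subcritical regime. This does not affect the truth of the theorem as stated --- it says ``up to rescalling'' uniformly across all four cases --- but your finer claim about when rescaling is dispensable should be corrected. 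Beyond this, your argument is a high-level roadmap rather than a proof: the actual collar estimates (the Sturm--Liouville analysis, the control of the boundary values, the quantitative statement of lower semicontinuity of the mass on the thick part) are the hard analytic content in \cite{H}, \cite{C-C}, \cite{Ji}, \cite{Wo}, \cite{M}, and you do not carry them out. As a commentary on what the cited theorems do, however, the proposal is accurate.
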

Recall that there is a copy of ${\mathcal{M}_{0, 2g+n}}$ in the compactification $\overline{\mathcal{M}_{g, n}}$ of ${\mathcal{M}_{g, n}}$. The ideas in \cite{BBD} along with above limiting results imply
\begin{lem}\label{bbd}
For any pair $(g, n)$, ${\Lambda_1}(g, n) \geq {\Lambda_1}(0, 2g+n).$
\end{lem}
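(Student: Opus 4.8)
The statement to prove is $\Lambda_1(g,n) \ge \Lambda_1(0, 2g+n)$. The idea, following \cite{BBD}, is to realize an arbitrary surface $R \in \mathcal{M}_{0, 2g+n}$ as a limit of surfaces $S_t \in \mathcal{M}_{g,n}$ obtained by \emph{opening up} $2g$ of the cusps of $R$ into short geodesics and gluing them in pairs to raise the genus, and then to compare $\lambda_1(S_t)$ with $\lambda_1(R)$ as $t \to 0$. Since $\Lambda_1(0, 2g+n)$ is a supremum, it suffices to show that for every $R \in \mathcal{M}_{0, 2g+n}$ one has $\Lambda_1(g,n) \ge \lambda_1(R)$; by the definition of $\Lambda_1(g,n)$ as a supremum this in turn follows once we exhibit, for each $\varepsilon > 0$, a surface in $\mathcal{M}_{g,n}$ with $\lambda_1 > \lambda_1(R) - \varepsilon$.

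\textbf{Step 1: the construction of $S_t$.} Given $R \in \mathcal{M}_{0, 2g+n}$, label its punctures so that $2g$ of them are paired as $(p_1, q_1), \dots, (p_g, q_g)$ and the remaining $n$ are left alone. For small $t > 0$ replace a cusp neighborhood of each $p_i$ and each $q_i$ by a collar about a closed geodesic of length $t$, and glue the collar around $p_i$ to the collar around $q_i$. This is the standard grafting/plumbing construction; it produces a finite-area hyperbolic surface $S_t$ of type $(g,n)$, and in the Deligne--Mumford compactification $\overline{\mathcal{M}_{g,n}}$ one has $S_t \to R$ as $t \to 0$, with the $g$ new short geodesics being pinched. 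I would record that the genus and number of punctures of $S_t$ are indeed $g$ and $n$ by an Euler-characteristic count.

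\textbf{Step 2: lower semicontinuity of $\lambda_1$ along the family.} This is the heart of the matter and the step I expect to be the main obstacle. One wants $\liminf_{t \to 0} \lambda_1(S_t) \ge \lambda_1(R)$. If $\lambda_1(R) \ge \tfrac14$ there is nothing subtle on the limit side: since $R$ is obtained from $S_t$ by collapsing \emph{one} homotopically nontrivial curve from each of the $g$ plumbing regions and $S_t$ stays connected, the limit surface has a single connected component, so by \cite{C-C} no eigenvalue of $S_t$ escapes to $0$; hence $\liminf \lambda_1(S_t) > 0$, and one argues that any putative limit eigenvalue $\mu < \lambda_1(R)$ of a sequence of $\lambda_1(S_t)$-eigenfunctions would, by the limiting theorem quoted just before this lemma (applied in the regime $\mu < \tfrac14$, case $(ii)$, or $\mu = \tfrac14$ / cuspidal, case $(iv)$, or $\mu > \tfrac14$, case $(iii)$), yield after rescaling a nonzero generalized eigenfunction on $R$ with eigenvalue $\mu < \lambda_1(R)$ and finite $L^2$-norm on the thick part — one then has to promote this to an honest $L^2$-eigenfunction of $R$ to get a contradiction with the definition of $\lambda_1(R)$. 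The delicate point is exactly this promotion: controlling the behaviour of the limit function in the cusps of $R$ that were created by the pinching, i.e.\ ruling out mass escaping into the collapsing collars. This is handled precisely as in Claim~\ref{sign} and the surrounding analysis in the proof of Proposition~\ref{diffisotopy}: a Hölder-inequality estimate on the thin part shows the limit function carries full $L^2$-mass on the thick part, hence is genuinely in $L^2(R)$, and then its eigenvalue $\mu$ must satisfy $\mu \ge \lambda_1(R)$, a contradiction. (When $\lambda_1(R) < \tfrac14$ the same argument applies verbatim, staying in case $(ii)$ throughout.)

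\textbf{Step 3: conclusion.} From Step 2, $\liminf_{t\to 0}\lambda_1(S_t) \ge \lambda_1(R)$, so for any $\varepsilon>0$ there is $t$ with $\lambda_1(S_t) > \lambda_1(R) - \varepsilon$, whence $\Lambda_1(g,n) \ge \lambda_1(R) - \varepsilon$; letting $\varepsilon \to 0$ and then taking the supremum over $R \in \mathcal{M}_{0,2g+n}$ gives $\Lambda_1(g,n) \ge \Lambda_1(0, 2g+n)$, as claimed. The only genuine work is Step 2, and within it the cusp-mass control; everything else is bookkeeping with the plumbing construction and the cited limiting theorems.
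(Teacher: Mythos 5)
Your proof follows exactly the route the paper sketches: realize a surface $R \in \mathcal{M}_{0,2g+n}$ as a boundary point of $\overline{\mathcal{M}_{g,n}}$ via the BBD plumbing (opening $2g$ cusps into short geodesics and gluing them in $g$ pairs), invoke the degeneration theorems of Hejhal/Colbois--Courtois/Wolpert/Ji/\cite{M} to get lower semicontinuity of $\lambda_1$ along the family, and pass to the supremum. The paper itself gives Lemma \ref{bbd} essentially as a one-line consequence of the \cite{BBD} construction plus the quoted limiting theorem, so your argument is a correct and more detailed unpacking of that same proof; the only thing worth noting is that since $R$ is non-compact, $\lambda_1(R)\le\tfrac14$ automatically, so your ``$\lambda_1(R)\ge\tfrac14$'' branch collapses to the case $\lambda_1(R)=\tfrac14$, and the $L^2$-promotion in the subcritical case is in fact immediate from the cusp Fourier expansion once the limit is known to be nonzero, which is part of the conclusion of the cited convergence results.
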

Motivated by this we focus on ${\Lambda_1}(0, n)$. Although we would not be able to prove conjecture \ref{conj} we have theorem \ref{puncturedsphere} on the multiplicity of $\lambda_1$ which we prove now.
\subsection{Proof of Theorem \ref{puncturedsphere}}
Let $S$ be a hyperbolic surface of genus $0$ and assume that ${\lambda_1}(S) \leq \frac{1}{4}$ is an eigenvalue. Let $\phi$ be a ${\lambda_1}(S)$-eigenfunction. Then the closure $\overline{\mathcal{Z}(\phi)}$ of the nodal set $\mathcal{Z}(\phi)$ of $\phi$ is a finite graph in $\overline{S}$ by theorem \ref{otal}. In particular, $\overline{\mathcal{Z}(\phi)}$ is a union of closed loops in $\overline{S}$. Observe also that the number of components of $\overline{S} \setminus \overline{\mathcal{Z}(\phi)}$ is same as that of $S \setminus \mathcal{Z}(\phi)$.

Now let $\overline{\mathcal{Z}(\phi)}$ consists of more than one closed loop. Then by Jordan curve theorem the number of components of $\overline{S} \setminus \overline{\mathcal{Z}(\phi)}$ is at least three. This is a contradiction to Courant's nodal domain theorem \ref{courant} which says that a ${\lambda_1}(S)$-eigenfunction can have at most two nodal domains. Hence we conclude that $\overline{\mathcal{Z}(\phi)}$ is a simple closed curve in $\overline{S}$. In particular, we have the following description of $\mathcal{Z}(\phi)$ at any puncture.
 \begin{claim}\label{arcs}
If one of the punctures $p$ of $S$ is a vertex of $\overline{\mathcal{Z}(\phi)}$ then the number of arcs in $\overline{\mathcal{Z}(\phi)}$ emanating from $p$ is at most two.
\end{claim}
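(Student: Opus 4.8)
The plan is to read the claim off directly from the structural description of $\overline{\mathcal{Z}(\phi)}$ just obtained, namely that it is a simple closed curve in $\overline{S}$, i.e.\ a subset homeomorphic to the circle $S^{1}$. That description itself rested on \lref{O} (which makes $\overline{\mathcal{Z}(\phi)}$ a finite graph with every nodal domain of negative Euler characteristic, hence a union of essential cycles), together with the Jordan curve theorem and \tref{courant} (at most two nodal domains), which force the ``union of cycles'' to be a single embedded loop.

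Granting this, here is how I would argue. Saying that the puncture $p$ is a vertex of $\overline{\mathcal{Z}(\phi)}$ means precisely that $p$ lies on this embedded circle, and the ``number of arcs emanating from $p$'' is the number of local branches of the graph $\overline{\mathcal{Z}(\phi)}$ at $p$, i.e.\ the degree of $p$ as a vertex. Since $\overline{\mathcal{Z}(\phi)}$ is homeomorphic to $S^{1}$, every one of its points --- in particular $p$ --- has a neighborhood inside $\overline{\mathcal{Z}(\phi)}$ homeomorphic to an open interval with $p$ an interior point. Hence exactly two arc-germs of $\overline{\mathcal{Z}(\phi)}$ issue from $p$, so the number of arcs emanating from $p$ equals two, in particular is at most two.

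There is essentially no further obstacle; the content has been front-loaded into showing that $\overline{\mathcal{Z}(\phi)}$ is a simple closed curve. The one point deserving a word of care is that, a priori, $\mathcal{Z}(\phi)$ inside $S$ could have several disjoint arcs each running out to the puncture $p$, so that their closures meet at $p$ and give $p$ degree $\ge 4$ in $\overline{\mathcal{Z}(\phi)}$; but this is exactly what the embedded-circle conclusion excludes, since then no neighborhood of $p$ in $\overline{\mathcal{Z}(\phi)}$ would be an interval and $\overline{\mathcal{Z}(\phi)}$ would fail to be simple. Thus the bound two is forced, and this is precisely the local picture at $p$ that will clash with Judge's lower bound of four arcs for an eigenfunction in $\ker\pi$, yielding \tref{puncturedsphere}.
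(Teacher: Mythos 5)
Your proposal is correct and is essentially the paper's own argument: the paper treats the claim as an immediate consequence of the just-established fact that $\overline{\mathcal{Z}(\phi)}$ is a single embedded circle in $\overline{S}$, and you simply spell out the local topology (every point of $S^1$ has an interval neighborhood, hence degree exactly two). The cautionary remark about several arcs of $\mathcal{Z}(\phi)$ running into $p$ and raising its degree is exactly the scenario that Judge's theorem is later used to produce for $\psi\in\ker\pi$, so your framing matches the paper's intent.
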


Let ${\lambda_1}(S)= s(1-s)$ with $s \in (\frac{1}{2}, 1]$. Let $p$ be one of the punctures of $S$. Let $\mathcal{P}^t$ be a cusp around $p$ (see \S1). Recall that $S$ being a punctured sphere, does not have any cuspidal eigenvalue \cite{Hu}, \cite{O}. Thus any ${\lambda_1}(S)$-eigenfunction $\phi$ is a linear combination of residues of Eisenstein series (see \cite{I}). It follows from \cite[Thorem 6.9]{I} that the $y^s$ term can not occur in the Fourier development (see \eqref{expres}) of these residues in $\mathcal{P}^t$. Hence $\phi$ has a Fourier development in $\mathcal{P}^t$ of the form (see \S1):
\begin{equation}\label{express}
\phi(x, y)= {\phi_0}{y^{1-s}} + {\sum_{j \geq 1}}\sqrt{\frac{2jy}{\pi}}{K_{s-\frac{1}{2}}}(jy)({\phi^e_j}\cos(j.x) + {\phi^o_j}\sin(j.x)).
\end{equation}
Now we consider the space $\mathcal{E}_1$ generated by ${\lambda_1}(S)$-eigenfunctions. The map $\pi: {\mathcal{E}_1} \to {\mathbb{R}^3}$ given by $\pi(\phi) = ({\phi_0}, {\phi^e_1}, {\phi^o_1})$ is linear and so if $\dim{\mathcal{E}_1} > 3$ then $\ker{\pi}$ is non-empty.

Let $\psi \in \ker{\pi}$ i.e. ${\psi_0} = {\psi^e_1} = {\psi^o_1} = 0$. Then by the result \cite{Ju} of Judge, the number of arcs in $\mathcal{Z}(\psi)$ emanating from $p$ is at least four, a contradiction to claim \ref{arcs}.
\section*{Acknowledgement}
I would like to thank my advisor Jean-Pierre Otal for all his help starting from suggesting the problem to me. I am thankful to Peter Buser and Werner Ballmann for the discussions that I had with them on this problem. I would like to thank the Max Planck Institute for Mathematics in Bonn
for its support and hospitality.

\appendix
\section{}
For the convenience of the reader we give a proof of the fact that, for $(g, n) \neq (0, 4), (1, 1)$, the complement ${\mathcal{M}_{g, n}} \setminus \mathcal{I}_\epsilon$ of the compact set $\mathcal{I}_\epsilon= \{S \in {\mathcal{M}_{g, n}}: s(S) \ge \epsilon \}$ \cite{B} is path connected.
\begin{lem}\label{pconn}
For any $(g, n) \neq (0, 4), (1, 1)$ with $2g-2+n > 0$ and any $\epsilon>0$ the set ${\mathcal{M}_{g, n}} \setminus \mathcal{I}_\epsilon$ is path connected.
\end{lem}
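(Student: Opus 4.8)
The plan is to cover $\mathcal{M}_{g,n}\setminus\mathcal{I}_\epsilon$ by sets of a very simple shape and then to glue these sets together using the connectivity of the complex of curves. Throughout one may assume $3g-3+n\ge 2$: among the pairs with $2g-2+n>0$ and $(g,n)\ne(0,4),(1,1)$ the only other one is $(0,3)$, where $S_{g,n}$ carries no essential simple closed curve and the statement is vacuous. Since the systole of a finite area hyperbolic surface is always realized by a \emph{simple} closed geodesic, a surface $S$ lies in $\mathcal{M}_{g,n}\setminus\mathcal{I}_\epsilon$ if and only if it carries a simple closed geodesic of length $<\epsilon$. Hence $\mathcal{M}_{g,n}\setminus\mathcal{I}_\epsilon=\bigcup_{[\alpha]}U_{[\alpha]}$, where $[\alpha]$ ranges over the topological types of essential simple closed curves on $S_{g,n}$ and $U_{[\alpha]}=\{S:\text{some curve of type }[\alpha]\text{ has length}<\epsilon\}$.

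The first thing I would check is that each $U_{[\alpha]}$ is path connected. Extend a representative of $[\alpha]$ to a pants decomposition $P=(\gamma_1,\dots,\gamma_N)$, $N=3g-3+n$, with $\gamma_1$ of type $[\alpha]$, and let $(l_i,\theta_i)_{i=1}^N$ be the Fenchel--Nielsen coordinates identifying $\mathcal{T}_{g,n}$ with $(\mathbb{R}_{>0}\times\mathbb{R})^N$. Writing $q\colon\mathcal{T}_{g,n}\to\mathcal{M}_{g,n}$ for the projection, one has $U_{[\alpha]}=q(\{l_1<\epsilon\})$, because by the change of coordinates principle the mapping class group orbit of $\gamma_1$ is exactly the set of curves of type $[\alpha]$. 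The set $\{l_1<\epsilon\}$ is a convex subset of $\mathcal{T}_{g,n}$, hence path connected, and so is its continuous image $U_{[\alpha]}$; note also $q(\{l_1<\epsilon\})\subset\mathcal{M}_{g,n}\setminus\mathcal{I}_\epsilon$.

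Next I would link the pieces. If $[\alpha]$ and $[\beta]$ admit \emph{disjoint} representatives $\alpha,\beta$, extend $\{\alpha,\beta\}$ to a pants decomposition with $\gamma_1$ of type $[\alpha]$ and $\gamma_2$ of type $[\beta]$; in the corresponding coordinates $\{l_1<\epsilon\}\cup\{l_2<\epsilon\}$ is a union of two convex sets both containing the point with all $l_i=\epsilon/2$ and all $\theta_i=0$, hence is path connected, with image inside $\mathcal{M}_{g,n}\setminus\mathcal{I}_\epsilon$. Thus $U_{[\alpha]}\cup U_{[\beta]}$ lies in a single path connected subset of the thin part, and any point of $U_{[\alpha]}$ can be joined to any point of $U_{[\beta]}$ inside $\mathcal{M}_{g,n}\setminus\mathcal{I}_\epsilon$. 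Finally, since $3g-3+n\ge 2$ the complex of curves of $S_{g,n}$ is connected, so any two essential simple closed curves are joined by a finite chain in which consecutive curves are disjoint; iterating the previous step along such a chain, and using that each $U_{[\alpha]}$ is path connected, shows that all of $\bigcup_{[\alpha]}U_{[\alpha]}=\mathcal{M}_{g,n}\setminus\mathcal{I}_\epsilon$ is path connected.

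The step I expect to be the crux — or rather the place where the hypotheses are genuinely used — is this last reduction to connectivity of the curve complex. It breaks down precisely when $3g-3+n=1$, i.e.\ for $(0,4)$ and $(1,1)$: there any two distinct essential simple closed curves intersect, a pants decomposition consists of a single curve, and the gluing construction of the third paragraph collapses; indeed for $(0,4)$ the statement is actually false, the thin part being a disjoint union of cusp neighbourhoods, one for each of the three topological types of curves. The only routine points left to verify are the standard fact that the systole is realized by a simple closed geodesic (needed to justify the cover by the $U_{[\alpha]}$) and the identity $U_{[\alpha]}=q(\{l_1<\epsilon\})$.
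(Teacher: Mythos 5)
Your proof is correct and takes essentially the same basic approach as the paper: identify the thin part as the set of surfaces carrying a short simple closed geodesic, use Fenchel--Nielsen coordinates to move inside pieces indexed by curves (or curve types) while keeping one length parameter below $\epsilon$, and then reduce the connectivity question to a combinatorial statement about curves on the surface. The genuine difference is in the treatment of two short curves that intersect. The paper asserts outright that, since $(g,n)\neq(0,4),(1,1)$, there always exists a simple closed curve $\gamma$ disjoint from \emph{both} $\gamma_1$ and $\gamma_2$; this claim is false as stated, since on any surface with $3g-3+n\geq 2$ one can take $\gamma_1$, $\gamma_2$ to be a filling pair, whose complement is a union of discs, so that no essential simple closed curve is disjoint from both (and such $\gamma_1,\gamma_2$ can indeed arise as the systolic curves of $S_1,S_2$). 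To repair this one must either re-mark $S_2$ so that the new representative of $[\gamma_2]$ is disjoint from $\gamma_1$ (using that one is working in moduli space, not Teichm\"uller space), or, as you do, iterate along a finite chain in the curve complex. Your appeal to connectedness of the curve complex for $3g-3+n\geq 2$ is the cleaner and more robust way to close this case, and it makes explicit exactly where the hypothesis $(g,n)\neq(0,4),(1,1)$ enters. The remaining ingredients you flag --- systole realized by a simple closed geodesic, and $U_{[\alpha]}=q(\{l_1<\epsilon\})$ via the change of coordinates principle --- are indeed the only routine facts needed.
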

\begin{proof}
Let $S_1$ and $S_2$ be two surfaces in ${\mathcal{M}_{g, n}}$ such that $s(S_i) < \epsilon$. So we have simple closed geodesics $\gamma_1$ on $S_1$ and $\gamma_2$ on $S_2$ such that the length $l_{\gamma_i}$ of $\gamma_i$ is $< \epsilon$. Recall that it has always been our practise to treat $\mathcal{M}_{g, n}$ as a subset of all possible metrics on a fixed surface $S$ and the geodesics are understood to be parametric curves on $S$ that satisfy certain differential equations provided by the metric.

With this understanding let us first assume that $\gamma_1$ does not intersect $\gamma_2$. So we may consider a pants decomposition $P$ of $S$ containing both $\gamma_1$ and $\gamma_2$. Let the Fenchel-Nielsen coordinates of $S_i$ be given by $({l_j}(S_i), {\theta_j}(S_i))_{j=1}^{3g-3+n}$. Here $l_1$, $l_2$ are the length parameters along $\gamma_1, \gamma_2$ and $\theta_1, \theta_2$ are twist parameters along $\gamma_1, \gamma_2$. Then consider the path $\beta: [0, 1] \to \mathcal{T}_2$ given by:
\begin{equation*}
{l_1}(\beta(t))= \begin{cases}
   {l_1}(S_1)\:&\text{if $t \in [0, \frac{1}{2}]$}, \\
  2(1-t){l_1}(S_1) +(2t-1){l_1}(S_2) &\text{if $t \in [\frac{1}{2}, 1]$}
  \end{cases}
\end{equation*}
\begin{equation*}
{l_2}(\beta(t))= \begin{cases}
   (1-2t){l_2}(S_1) + 2t{l_2}(S_2)\:&\text{if $t \in [0, \frac{1}{2}]$}, \\
  {l_2}(S_2) &\text{if $t \in [\frac{1}{2}, 1]$}
    \end{cases}
\end{equation*}
${l_3}(\beta(t))=(1-t){l_3}(S_1) + t {l_3}(S_2)$ and ${\theta_j}(\beta(t))=(1-t){\theta_j}(S_1) + t {\theta_j}(S_2)$. Since $l_1(\beta(t)) < \epsilon$ for $t \in [0, \frac{1}{2}]$ and $l_2(\beta(t)) < \epsilon$ for $t \in [\frac{1}{2}, 1]$ we observe that $s(\beta(t)) < \epsilon$ for all $t$. The image of $\beta$ under the quotient map $\mathcal{T}_{g, n} \to \mathcal{M}_{g, n}$ produces the required path joining $S_1$ and $S_2$.

Now let us assume that $\gamma_1$ intersects $\gamma_2$. Let $\gamma$ be a simple closed geodesic that does not intersect $\gamma_1$ and $\gamma_2$. By our assumption i.e. $(g, n) \neq (0, 4), (1, 1)$ such a geodesic exists. Then by the procedure described above both $S_1$ and $S_2$ can be joined by a path in ${\mathcal{M}_{g, n}} \setminus \mathcal{I}_\epsilon$ to a surface on which $\gamma$ has length $< \epsilon$. This finishes the proof.
\end{proof}


\begin{thebibliography}{}
\bibitem[B]{B} Bers, L.; A remark on Mumford's compactness theorem, Israel J. Math. 12 (1972), 400-407.
\bibitem[B1]{B1} Buser, Peter; Cubic graphs and the first eigenvalue of a Riemann surface, Math. Z. 162 (1978), 87-99.
\bibitem[B2]{B2} Buser, Peter; On the bipartition of graphs, Discrete Applied Mathematics, 9 (1984), 105-109.
\bibitem[B3]{B3} Buser, Peter; Geometry and spectra of compact Riemann surfaces. Progress in Mathematics, 106. Birkh\"{a}user Boston, Inc., Boston, MA, 1992.
\bibitem[BBD]{BBD} Burger, M., Buser, P., Dodziuk, J.; Riemann surfaces of large genus and large $\lambda_1$. Geometry and Analysis on Manifolds (T. Sunada, ed.), Lecture Notes in Math. 1339, Springer-Verlag, Berlin, 1988, 54-63.
\bibitem[B-M]{B-M} Brooks, R., Makover E., Riemann surfaces with large first eigenvalue. J. Anal. Math. 83 (2001), 243 - 258.
\bibitem[Cha]{Cha}   Chavel, Isaac; Eigenvalues in Riemannian geometry. Pure and Applied Mathematics, 115. Academic Press, 1984.
\bibitem[Che]{Che} Cheng, S. Y.; Eigenfunctions and nodal sets, Comment. Math. Helvetici 51 (1976), 43-55.
\bibitem[C-C]{C-C} Colbois, B., Courtois, G., Les valeurs propres inf\'{e}rieures \'{a} 1/4 des surfaces de Riemann de petit rayon d'injectivit\'{e}. Comment. Math. Helv. 64 (1989), no. 3, 349- 362.
\bibitem[C-V]{C-V} Colbois, B.; Colin de Verdière, Y.; Sur la multiplicité de la première valeur propre d'une surface de Riemann à courbure constante. (French) [Multiplicity of the first eigenvalue of a Riemann surface with constant curvature] Comment. Math. Helv. 63 (1988), no. 2, 194–208.
\bibitem[H]{H}  Hejhal, D. ; Regular b-groups, degenerating Riemann surfaces and spectral theory, Memoires of Amer. Math. Soc. 88, No. 437, 1990.
\bibitem[Hu]{Hu} Huxley, M. N.; Cheeger's inequality with a boundary term, Commentarii Mathematici Helvetici 58 (1983).
\bibitem[I]{I} Iwaniec, H., Introduction to the Spectral Theory of Automorphic Forms, Bibl. Rev. Mat. Iberoamericana, Revista Matem\'{a}tica Iberoamericana, Madrid, 1995.
\bibitem[Je]{Je} Jenni, F.; Uber den ersten Eigenwert des Laplace-Operators auf ausgewählten Beispielen kompakter Riemannscher Flächen. (German) [On the first eigenvalue of the Laplace operator on selected examples of compact Riemann surfaces] Comment. Math. Helv. 59 (1984), no. 2, 193-203.
\bibitem[Ji]{Ji} Ji, Lizhen; Spectral degeneration of hyperbolic Riemann surfaces. J. Differential Geom. 38 (1993), no. 2, 263 - 313.
\bibitem[Ju]{Ju} Judge, Chris; The nodal set of a finite sum of Maass cusp forms is a graph. Proceedings of Symposia in Pure Mathematics, Volume 84, 2012
\bibitem[K-S]{K-S} Kim, Henry H.; Functoriality for the exterior square of $GL_4$ and symmetric fourth of $GL_2$. J. Amer. Math. Soc. 16 (2003), no. 1, 139 - 183.
\bibitem[M]{M} Mondal, Sugata; Topological bounds on the number of cuspidal eigenvalues of finite area hyperbolic surfaces (preprint)
\bibitem[O]{O} Otal, Jean-Pierre; Three topological properties of small eigenfunctions on hyperbolic surfaces. Geometry and Dynamics of Groups and Spaces, Progr. Math. 265, Birkh\"{a}user, Bassel, 2008.
\bibitem[O-R]{O-R} Otal, Jean-Pierre; Rosas, Eulalio; Pour toute surface hyperbolique de genre g, ${\lambda_{2g-2}}>1/4$. Duke Math. J. 150 (2009), no. 1, 101 - 115.
\bibitem[Se]{Se} Selberg, A.; On the estimation of Fourier coefficients of modular forms. Proc. Symp. Pure Math. VII, Amer. Math. Soc. (1965), 1-15.
\bibitem[S-U]{S-U} Strohmaier, A., Uski, V.; An algorithm for the computation of eigenvalues, spectral zeta functions and zeta-determinants on hyperbolic surfaces. Comm. Math. Phys. 317, (2013), no. 3, 827--869.
\bibitem[Wo]{Wo} Wolpert, S. A.; Spectral limits for hyperbolic surface, I, Invent. Math. 108 (1992), 67 - 89.
\end{thebibliography}
\end{document}